\newcommand{\N}{\mathbb{N}}
\newcommand{\Z}{\mathbb{Z}}
\newcommand{\R}{\mathbb{R}}
 \newcommand{\vertiii}[1]{{\left\vert\kern-0.25ex\left\vert\kern-0.25ex\left\vert #1 
    \right\vert\kern-0.25ex\right\vert\kern-0.25ex\right\vert}}
 \newtheorem{thm}{Theorem}[section]
 \newtheorem{lemma}[thm]{Lemma}
 \newtheorem{cor}[thm]{Corollary}
 \newtheorem{prop}[thm]{Proposition}
 \newtheorem{rem}[thm]{Remark}
 \newtheorem{defin}[thm]{Definition}
 \numberwithin{equation}{section}
\newenvironment{theorem}[2][Theorem]{\begin{trivlist}
\item[\hskip \labelsep {\bfseries #1}\hskip \labelsep {\bfseries #2.}]}{\end{trivlist}}
\theoremstyle{definition}
\begin{document}

\title[Compact pseudodifferential and Fourier integral operators via localization]{Compact pseudodifferential and Fourier integral operators via localization}

\author{Cody B. Stockdale}
\address{Cody B. Stockdale, School of Mathematical Sciences and Statistics, Clemson University, Clemson, SC 29634, USA}
\email{cbstock@clemson.edu}

\author{Cody Waters}
\address{Cody Waters, School of Mathematical Sciences and Statistics, Clemson University, Clemson, SC 29634, USA}
\email{chwater@clemson.edu}

\begin{abstract}
We present a general framework of localized operators, i.e., operators whose matrix coefficients with respect to the Gabor frame are concentrated on the diagonal. We show that localized operators are bounded between modulation spaces, and we deduce their compactness from an easily verifiable weak compactness condition. We apply this abstract formalism to unify and extend existing theorems for pseudodifferential and Fourier integral operators, and to obtain new results for three-parameter pseudodifferential operators.
\end{abstract}

\keywords{Pseudodifferential operators, Fourier integral operators, compact operators, Gabor frame, localized operators, modulation spaces}

\subjclass[2020]{47G30}

\maketitle


\section{Introduction}\label{Section:Introduction}

Pseudodifferential operators are ubiquitous in analysis and have widespread applications in fields such as partial differential equations, signal processing, and quantum mechanics. In their seminal work \cite{CV1972}, Calder\'on and Vaillancourt famously showed that if a symbol has bounded derivatives up to a certain finite order, 
then the associated pseudodifferential operator is bounded on $L^2(\mathbb{R}^n)$. This result had a profound impact and inspired many refinements and extensions; see, for instance, \cites{S1993, G2001, CR2020} and references therein. 

The required smoothness of the symbol for the associated pseudodifferential operator to be bounded was more precisely quantified in \cite{HRT1997}, where it was shown that if $s>2n$ and $\sigma$ is in the H\"older-Zygmund space $\Lambda^s(\mathbb{R}^{2n})$, then the Kohn-Nirenberg and Weyl pseudodifferential operators, $Op^{\text{KN}}(\sigma)$ and $Op^{\text{W}}(\sigma)$, are bounded on $L^2(\mathbb{R}^n)$. Independently, in \cite{S1994}, Sj\"ostrand further relaxed the smoothness assumption on $\sigma$ to membership in $M^{\infty,1}(\mathbb{R}^{2n})$. Under this assumption, 
the bounds extend from $L^2(\mathbb{R}^n)$ to general modulation spaces $M^{p,q}(\mathbb{R}^{n})$ and hold for any $\tau$-pseudodifferential operator $Op_{\tau}(\sigma)$ given by 
$$
    Op_{\tau}(\sigma)f(x) = \iint_{\mathbb{R}^{2n}} \sigma(\tau x+(1-\tau)y,\xi)e^{2\pi i (x-y)\cdot\xi}f(y)\,dyd\xi.
$$
Recall the Kohn-Nirenberg and Weyl pseudodifferential operators are the cases of $Op_{\tau}(\sigma)$ when $\tau=1$ and $\tau=\frac{1}{2}$, respectively. 
\begin{theorem}{A}
\emph{Let $p,q\in[1,\infty)$ and $\tau \in [0,1]$. If $\sigma \in M^{\infty,1}(\mathbb{R}^{2n})$, then $Op_{\tau}(\sigma)$ is bounded on $M^{p,q}(\mathbb{R}^n)$.}
\end{theorem}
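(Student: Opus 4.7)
The plan is to verify that $Op_{\tau}(\sigma)$ fits within the abstract framework of localized operators developed earlier in the paper, so that the general boundedness theorem applies directly. The classical proofs of Sj\"ostrand and Gr\"ochenig--Heil essentially reduce this boundedness to a Schur-type estimate on the Gabor matrix of the operator, and the abstract framework is tailor-made to accept such input.

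First, fix a non-zero window $g \in \mathcal{S}(\R^n)$ and a Schwartz window $\Phi \in \mathcal{S}(\R^{4n})$. Starting from the definition of $Op_{\tau}(\sigma)$, inserting the time-frequency shifts $\pi(z)g$ and $\pi(w)g$, and unwinding the resulting oscillatory integral produces the identity
$$
|\langle Op_{\tau}(\sigma) \pi(z)g, \pi(w)g \rangle| = |V_{\Phi_{\tau}}\sigma(\mathcal{T}_{\tau}(w,z))|,
$$
where $\Phi_{\tau}$ is a Schwartz window depending smoothly on $\tau$ and $\mathcal{T}_{\tau}: \R^{4n} \to \R^{4n}$ is an affine bijection whose spatial component is a convex combination of the spatial parts of $w$ and $z$ and whose frequency component is the symplectic rotation of $w-z$. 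This identity is well known for the Kohn--Nirenberg and Weyl cases, and the general $\tau$ version follows from the same computation.

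Second, translate the $M^{\infty,1}$ hypothesis into a diagonal concentration estimate. By definition, $\sigma \in M^{\infty,1}(\R^{2n})$ means that for any Schwartz window the STFT lies in $L^{\infty}_{\text{spatial}} L^{1}_{\text{freq}}$, and the standard change-of-window inequality makes this equivalent across windows with comparable constants on any compact set of window parameters. Applying this to $\Phi_{\tau}$ for $\tau \in [0,1]$ and using the specific form of $\mathcal{T}_{\tau}$ yields a bound
$$
|\langle Op_{\tau}(\sigma) \pi(z)g, \pi(w)g \rangle| \le H(w-z), \qquad \|H\|_{L^{1}(\R^{2n})} \lesssim \|\sigma\|_{M^{\infty,1}},
$$
uniformly in $\tau \in [0,1]$. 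This is precisely the localization condition required by the abstract framework.

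Third, invoke the abstract boundedness theorem for localized operators stated earlier in the paper to conclude that $Op_{\tau}(\sigma)$ extends to a bounded operator on $M^{p,q}(\R^n)$ for all $p,q \in [1,\infty)$. The main obstacle is the first step: producing the matrix-coefficient identity in a form that is uniform in $\tau$. The parameter $\tau$ enters both the window $\Phi_{\tau}$ and the change of variables $\mathcal{T}_{\tau}$, so one must check that the change-of-window constants remain bounded on $[0,1]$ and that $\mathcal{T}_{\tau}$ preserves the $L^{1}_{\text{freq}}$ structure uniformly. Once this is in place, the theorem reduces entirely to the abstract result and no further analysis of the oscillatory integral defining $Op_{\tau}(\sigma)$ is needed.
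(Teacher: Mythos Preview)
Your proposal is correct and follows essentially the same route as the paper: establish that $Op_{\tau}(\sigma)$ is localized (i.e., its Gabor matrix is dominated by an $L^1$ function of $w-z$), then invoke the abstract boundedness result for localized operators (Proposition~\ref{localized>bounded} / Theorem~\ref{AbstractCompactnessTheorem}). The only difference is cosmetic: the paper obtains the localization step by citing \cite{CNT2019}*{Theorem 4.1} (in the unweighted case $\nu\equiv 1$), whereas you sketch that argument directly via the STFT identity for the Gabor matrix of $Op_{\tau}(\sigma)$ --- this is exactly the content of the cited result, so the two proofs coincide.
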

\noindent In light of the $\tau$-independence of \cite{S1994}*{Corollary 1.2}, the bound on $M^{2,2}(\mathbb{R}^n)=L^2(\mathbb{R}^n)$ was proved in \cite{S1994}*{Section 3}, the $p=q$ case of Theorem A first appeared in \cite{GH1999}*{Theorem 1.1}, and the full result was established by Gr\"ochenig in \cite{G2001}*{Theorem 14.5.2}. 

Compact pseudodifferential operators have previously been studied in \cites{BGHO2005, FG2007, CST2023, C1975, DR2016, L2001, M2011, M2014, MW2010}. In particular, the compactness of $Op_{\tau}(\sigma)$ with $\sigma \in M^{\infty,1}(\mathbb{R}^{2n})$ on $M^{p,q}(\mathbb{R}^{n})$ is related to the vanishing of its symbol's short-time Fourier transform (STFT) $V_{\varphi}\sigma$ as follows. 
\begin{theorem}{B}
\emph{Let $p,q\in [1,\infty)$ and $\tau \in [0,1]$. 
If $\sigma \in M^{\infty,1}(\mathbb{R}^{2n})$ satisfies 
$$
    \lim_{(z,\zeta)\rightarrow \infty} V_{\varphi}\sigma(z,\zeta) = 0,
$$
then $Op_{\tau}(\sigma)$ is compact on $M^{p,q}(\mathbb{R}^{n})$.}
\end{theorem}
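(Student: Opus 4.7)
The plan is to realize $Op_{\tau}(\sigma)$ as a localized operator in the abstract framework of the paper and then deduce compactness from the vanishing hypothesis on $V_{\varphi}\sigma$. The central tool is a $\tau$-uniform pointwise control of the Gabor matrix coefficients of $Op_{\tau}(\sigma)$ by the STFT of the symbol. Specifically, for a Gaussian window $g$ on $\R^{n}$ and a suitable Gaussian window $\Phi$ on $\R^{2n}$, there is an affine map $\mathcal{T}_{\tau}:\R^{2n}\times\R^{2n}\to\R^{4n}$ such that
$$
|\langle Op_{\tau}(\sigma)\pi(\mu)g,\pi(\lambda)g\rangle| \;\leq\; C\,|V_{\Phi}\sigma(\mathcal{T}_{\tau}(\lambda,\mu))|,
$$
where $C$ is independent of $\tau\in[0,1]$, the second argument of $\mathcal{T}_{\tau}(\lambda,\mu)$ is essentially $J(\lambda-\mu)$ for a symplectic $J$, and the first argument is a convex combination of $\lambda$ and $\mu$ weighted by $\tau$. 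This identity is the standard Gabor-matrix representation for $\tau$-quantizations and is what underlies both Theorem A and the envelope needed for the localized operator framework.

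First I would verify that $Op_{\tau}(\sigma)$ is a localized operator in the sense of the paper: the envelope $H(\lambda-\mu) := \sup_{u}|V_{\Phi}\sigma(u,J(\lambda-\mu))|$ is in $L^{1}(\R^{2n})$ precisely because $\sigma\in M^{\infty,1}(\R^{2n})$, and the inequality above packages $Op_{\tau}(\sigma)$ as an operator whose Gabor matrix is dominated by a convolution-type kernel in $\lambda-\mu$, uniformly in $\tau$. This places $Op_{\tau}(\sigma)$ inside the class to which the paper's boundedness result applies, recovering Theorem A as a sanity check.

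Next I would check the additional weak compactness condition of the abstract compactness criterion. Translated through the Gabor-matrix estimate, this amounts to showing that for every $\varepsilon>0$ there is a compact set $K\subset\R^{2n}$ such that $|\langle Op_{\tau}(\sigma)\pi(\mu)g,\pi(\lambda)g\rangle|<\varepsilon$ whenever $\lambda\notin K$, for every $\mu$, or more precisely a vanishing condition along the appropriate ``off-diagonal'' direction. Using the matrix bound, this reduces to showing $|V_{\Phi}\sigma(\mathcal{T}_{\tau}(\lambda,\mu))|\to 0$ uniformly in $\tau$ as $\lambda\to\infty$ (with $\mu$ in a suitable range dictated by the envelope), which follows from the hypothesis $\lim_{(z,\zeta)\to\infty}V_{\varphi}\sigma(z,\zeta)=0$ after a change-of-window argument showing that vanishing of $V_{\varphi}\sigma$ at infinity implies vanishing of $V_{\Phi}\sigma$ at infinity. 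The envelope integrability controls the $\mu$-direction, so only the decay of the STFT in the first variable of $\mathcal{T}_{\tau}$ is needed, and this is independent of $\tau$ because $\tau$ only produces a convex combination.

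The main obstacle will be the precise bookkeeping that ensures $\tau$-uniformity in both the envelope step and the vanishing step; concretely, one has to check that as $\lambda\to\infty$ the point $\mathcal{T}_{\tau}(\lambda,\mu)$ escapes every compact subset of $\R^{4n}$ uniformly in $\tau\in[0,1]$ and uniformly over the $\mu$ where the envelope $H(\lambda-\mu)$ is not already negligible. Once this is in hand, the abstract compactness theorem of the localized operator framework yields compactness of $Op_{\tau}(\sigma)$ on $M^{p,q}(\R^{n})$ for all $p,q\in[1,\infty)$ and $\tau\in[0,1]$, completing the proof.
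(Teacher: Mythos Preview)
Your proposal is correct and follows the same overall architecture as the paper---verify localization, verify weak compactness, then invoke the abstract Theorem \ref{AbstractCompactnessTheorem}---but your verification of weak compactness is genuinely different from the paper's. You use the explicit Gabor--matrix bound $|\langle Op_{\tau}(\sigma)\pi(\mu)g,\pi(\lambda)g\rangle| \le C\,|V_{\Phi}\sigma(\mathcal{T}_{\tau}(\lambda,\mu))|$ directly: when $\mu\in\lambda+K$ the first slot of $\mathcal{T}_{\tau}(\lambda,\mu)$ is $\lambda+O(1)$ (independently of $\tau$, since $\tau$ only enters through a convex combination), so the hypothesis $V_{\varphi}\sigma\to 0$ at infinity (transferred to $V_{\Phi}\sigma$ by a change-of-window convolution) gives the vanishing. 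The paper instead proves the stronger uniform statement $\sup_{w}|\langle Op_{\tau}(\sigma)\pi(z)\varphi,\pi(w)\varphi\rangle|\to 0$ by an indirect route: it first establishes the estimate $\sup_{z,w}|\langle T_{\sigma,\Phi}\pi(z)\varphi,\pi(w)\varphi\rangle|\lesssim\|\sigma\|_{M^{\infty}}$ for Fourier integral operators, observes that for $\sigma\in\mathcal{S}(\R^{2n})$ the operator is Hilbert--Schmidt and hence the vanishing is automatic, and then passes to general $\sigma\in M^{0}(\R^{2n})$ by density; the case $\tau=1$ is the FIO with phase $x\cdot\xi$, and other $\tau$ are reduced to $\tau=1$ via the symbol isomorphism $S_{\tau\to 1}$ of $M^{0}(\R^{2n})$. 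Your argument is shorter and self-contained for $\tau$-pseudodifferential operators, while the paper's detour through FIOs and the $S_{\tau_1\to\tau_2}$ calculus is what allows it to treat Fourier integral operators with general tame phases by the same mechanism. One small point of presentation: your description of weak compactness (``for every $\mu$'') overstates what is needed; the paper's definition only requires the vanishing for $\mu$ in a translate $\lambda+K$ of a fixed compact set, which is exactly the regime where your convex-combination argument applies cleanly.
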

\noindent The sufficiency of the vanishing of the STFT for compactness of $Op^{\text{KN}}(\sigma)$ was given by B\'enyi, Gr\"ochenig, Heil, and Okoudjou in \cite{BGHO2005}*{Proposition 2.3}, and its necessity for the compactness of $Op^{\text{W}}(\sigma)$ was proved by Fern\'andez and Galbis in \cite{FG2007}*{Theorem 4.6}. 

We recognize Theorems A and B as particular instances of a general phenomenon regarding localized operators. The notion of localization, which quantifies the idea that the matrix coefficients of an operator with respect to the Gabor frame are concentrated on the diagonal (actually, on the graph of a bi-Lipschitz diffeomorphism), was introduced in the context of Weyl pseudodifferential operators by Gr\"ochenig in \cite{G2006}; in fact, localization of $Op^{W}(\sigma)$ is a defining feature of $\sigma \in M^{\infty,1}(\mathbb{R}^{2n})$. We show that localized operators are bounded on modulation spaces, and we characterize their compactness with a weak compactness condition, which asserts that the matrix coefficients decay in the direction of the diagonal. 

Our main result makes this idea precise in a more general setting and is stated as follows. 
\begin{thm}\label{AbstractCompactnessTheorem}
Let $p,q \in [1,\infty)$, $g_1,g_2 \in \mathcal{S}_{\vartheta}(\mathbb{R}^n) \setminus \{ 0 \}$, $\chi:\mathbb{R}^{2n}\rightarrow\mathbb{R}^{2n}$ be a bi-Lipschitz diffeomorphism, $\nu$ be an admissible weight, and $m$ be a $\nu$-moderate weight. If $T : \mathcal{S}_{\vartheta}(\mathbb{R}^n) \rightarrow \mathcal{S}_{\vartheta}'(\mathbb{R}^n)$ is $(\nu,\chi,g_1,g_2)$-localized, then $T$ is bounded from $M^{p,q}_{m,\chi}(\mathbb{R}^n)$ to $M^{p,q}_{m}(\mathbb{R}^n)$. If, additionally, $T$ is $(\chi,g_1,g_2)$-weakly compact, then $T$ is compact from $M^{p,q}_{m,\chi}(\mathbb{R}^n)$ to $M_m^{p,q}(\R^n)$. 
\end{thm}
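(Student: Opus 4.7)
The plan is to exploit Gabor frame analysis to translate the theorem into a statement about the matrix of $T$ with respect to a suitable Gabor frame, and then to use the localization hypothesis to reduce both boundedness and compactness to kernel-type estimates on the time-frequency side.

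First, I would choose a Gabor frame generated by a window in $\mathcal{S}_{\vartheta}(\mathbb{R}^n)$ that provides atomic decompositions for both $M^{p,q}_m(\mathbb{R}^n)$ and $M^{p,q}_{m,\chi}(\mathbb{R}^n)$. The central object is the matrix kernel
$$K_T(z,w) := \langle T\pi(w)g_1, \pi(z)g_2\rangle, \qquad z,w\in\mathbb{R}^{2n},$$
which, by definition of $(\nu,\chi,g_1,g_2)$-localization, satisfies $|K_T(z,w)| \le H(z-\chi(w))$ for some envelope $H$ in an appropriate weighted $L^1$-space controlled by $\nu$. Using the reproducing formula $V_{g_2}(Tf)(z) = c\int_{\mathbb{R}^{2n}} K_T(z,w)V_{g_1}f(w)\,dw$, the action of $T$ on STFT-sides becomes an integral operator with kernel $K_T$.

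For boundedness, I would perform the change of variable $w\mapsto\chi^{-1}(w)$ (whose Jacobian is controlled by the bi-Lipschitz constants of $\chi$) to reveal the kernel as a convolution against $H$ up to a bounded multiplier. The norm $\|f\|_{M^{p,q}_{m,\chi}}$ is the weighted mixed $L^{p,q}_{m\circ\chi}$-norm of $V_{g_1}f$, and after the substitution the pullback weight $m\circ\chi$ matches exactly the $m$-weight on the output side. The $\nu$-moderateness of $m$ then allows the pointwise bound $m(z) \lesssim \nu(z-\chi(w))\,m(\chi(w))$, which separates the contributions: a weighted Young/Minkowski inequality absorbs $\nu$ into $H$ via the $L^1_\nu$-norm of the envelope, leaving the $m\circ\chi$-weighted $L^{p,q}$-norm of $V_{g_1}f$, which is $\|f\|_{M^{p,q}_{m,\chi}}$.

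For compactness, I would approximate $T$ uniformly by finite-rank operators using weak compactness. Concretely, $(\chi,g_1,g_2)$-weak compactness says the matrix coefficients $K_T(z,w)$ decay as $w\to\infty$ along the graph of $\chi$, so for each $\varepsilon>0$ there is a compact set $\Omega_\varepsilon\subset\mathbb{R}^{2n}$ outside which the envelope bound can be replaced by $\varepsilon\cdot H(z-\chi(w))$. Splitting $T=T_{\Omega_\varepsilon}+R_\varepsilon$ according to this truncation in $w$, the boundedness argument applied to $R_\varepsilon$ shows $\|R_\varepsilon\|_{M^{p,q}_{m,\chi}\to M^{p,q}_m}\lesssim\varepsilon$. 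The truncated piece $T_{\Omega_\varepsilon}$ has kernel supported in $w\in\Omega_\varepsilon$, and because $z-\chi(w)$ is forced into a bounded range by the $L^1_\nu$ tail of $H$, it can be further approximated by a matrix of finite Gabor support, hence by a finite-rank operator of the form $f\mapsto\sum_{i,j}c_{ij}\langle f,\pi(\lambda_j)g_1\rangle\pi(\mu_i)g_2$. Letting $\varepsilon\to 0$ shows $T$ is a norm-limit of finite-rank operators, and hence compact; the exclusion of $p,q=\infty$ is exactly what makes finitely supported Gabor sequences dense in the relevant sequence spaces.

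The main technical obstacle will be the bookkeeping of weights under the nonlinear diffeomorphism $\chi$: one must verify that $m\circ\chi$ remains $\nu$-moderate (which should follow from the Lipschitz bounds on $\chi^{\pm 1}$) so that the Gabor atomic decomposition of $M^{p,q}_{m,\chi}$ is available, and that the submultiplicative estimate $m(z)\lesssim\nu(z-\chi(w))\,m(\chi(w))$ can be used uniformly inside the integral. Executing these estimates cleanly in the full range $p,q\in[1,\infty)$ is the core of the argument.
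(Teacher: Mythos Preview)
Your boundedness argument is essentially the paper's: reproduce $V_{g_2}(Tf)$ via the kernel $K_T$, change variables $w\mapsto\chi^{-1}(w)$, and apply the weighted Young inequality $L^{p,q}_m * L^1_\nu \subseteq L^{p,q}_m$. One caveat: you identify $\|f\|_{M^{p,q}_{m,\chi}}$ with the $L^{p,q}_{m\circ\chi}$-norm of $V_{g_1}f$, but the paper defines it as the $L^{p,q}_m$-norm of $V_{g_1}f\circ\chi^{-1}$, and these agree only when composition with $\chi$ is an isomorphism of $L^{p,q}$ (automatic for $p=q$, false in general). This is exactly why the pullback space is introduced. After your change of variables the correct norm appears anyway, so the argument survives, but the ``$m\circ\chi$ is $\nu$-moderate'' issue you flag as the main obstacle is a red herring.

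The compactness argument, however, has a real gap. You assert that weak compactness lets you replace the envelope bound by $\varepsilon\cdot H(z-\chi(w))$ for $w$ outside some compact $\Omega_\varepsilon$, and then run the boundedness estimate on the remainder $R_\varepsilon$. But weak compactness says nothing of the sort: it only gives that $|K_T(z,\chi^{-1}(w))|$ is small when $z$ is large \emph{and} $w$ stays in a fixed compact neighborhood of $z$. There is no improvement of the envelope $H$ away from the diagonal, and no control at all for small $z$. So your bound $\|R_\varepsilon\|\lesssim\varepsilon$ does not follow, and the finite-rank approximation scheme breaks down at this step.

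The paper avoids this by working on the output side rather than the input side. It invokes the D\"orfler--Feichtinger--Gr\"ochenig compactness criterion: $T$ is compact into $M^{p,q}_m$ provided $\|\mathbbm{1}_{K^c}V_{g_2}(Tf)\|_{L^{p,q}_m}$ is uniformly small over the unit ball for $K$ large. For $z\notin K$ one splits the integral over $w$ into the near-diagonal region $w\in z+K'$ and its complement. On the near-diagonal piece weak compactness gives the smallness directly (this is precisely what the hypothesis is designed for); on the far piece one uses that the $L^1_\nu$-tail $\mathbbm{1}_{(K')^c}L$ is small and applies the convolution inequality. Both truncations are in the \emph{output} variable $z$ and in the \emph{difference} $z-w$, never in $w$ alone, which is what makes the weak compactness hypothesis usable.
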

\noindent See Section \ref{AbstractCompactnessSection} for details on the setup and definitions involved in Theorem \ref{AbstractCompactnessTheorem}. 

\begin{rem}
    While the extension of Calder\'on and Vaillancourt's result of Theorem A 
    applies to all $Op_{\tau}(\sigma)$ (and, as we will see below, even more general operators) and yields bounds on all modulation spaces, it only gives $L^p(\mathbb{R}^n)$ bounds for $p=2$. Other versions concern symbols in H\"ormander's class $S^m_{\rho,\delta}(\mathbb{R}^{2n})$ and yield $L^p(\mathbb{R}^n)$ bounds for a full range of $p$. In particular, if $\sigma \in S_{1,0}^0(\mathbb{R}^{2n})$, i.e. $\sigma$ is smooth and satisfies 
$$
    | \partial_x^{\alpha} \partial_\xi^{\beta} \sigma(x,\xi)| \lesssim (1+|\xi|)^{-|\beta|}
$$
    for all multi-indices $\alpha$ and $\beta$, 
    then $Op^{\text{KN}}(\sigma)$ is a Calder\'on-Zygmund operator and hence bounded on $L^p(\mathbb{R}^n)$ for all $p \in (1,\infty)$; see \cite{S1993}. Note that the condition $\sigma \in M^{\infty,1}(\mathbb{R}^{2n})$ is much weaker; namely, if $s>2n$, then 
    $$
        S^0_{1,0}(\mathbb{R}^{2n}) \subsetneq \Lambda^s(\mathbb{R}^{2n}) \subsetneq M^{\infty,1}(\mathbb{R}^{2n}).
    $$    
    
    The compactness of such operators was recently investigated by Carro, Soria, and Torres in \cite{CST2023}, where it was shown that if $\sigma$ satisfies a vanishing version of the $S_{1,0}^0(\mathbb{R}^{2n})$ condition, then $Op^{\text{KN}}(\sigma)$ is a compact Calder\'on-Zygmund operator and hence compact on $L^p(\mathbb{R}^n)$ for all $p \in (1,\infty)$. 
    See \cites{V2015, SVW2022, FGW2023, BLOT2025, BLOT20251, OV2017, PPV2017, CYY2024, MS2023, BOT2024} for more on compact Calder\'on-Zygmund theory. 
\end{rem}

\begin{rem}
    In personal communication \cite{F2024}, Fulsche described how to achieve a result similar to the $p=q$ case of Theorem B using quantum harmonic analysis techniques. Let 
    $$
        C(L^2(\mathbb{R}^n)):= \{T \in \mathcal{B}(L^2(\mathbb{R}^n)): \pi(z)T\pi(z)^* \rightarrow T \text{  in operator norm as  } z\rightarrow 0\},
    $$
    where $\pi(z)$ denotes the time-frequency shift defined in Section \ref{AbstractCompactnessSection} below. A foundational result of Werner \cite{W1984}*{Corollary 5.1} states that if $T \in \mathcal{B}(L^2(\mathbb{R}^n))$, then $T$ is compact if and only if $T \in C(L^2(\mathbb{R}^n))$ and $R_0 * T$ is continuous and vanishes at infinity, where $R_0 := \phi \otimes \phi$ with $\phi$ being the ground state of the quantum harmonic oscillator and $R_0 * T$ denotes the operator convolution. One can show that $Op_{\tau}(\sigma) \in C(L^2(\mathbb{R}^n))$ for $\sigma \in M^{\infty,1}(\mathbb{R}^{2n})$, and so the following holds as a consequence: if $\tau \in [0,1]$ and $\sigma \in M^{\infty,1}(\mathbb{R}^{2n})$, then $Op_{\tau}(\sigma)$ is compact on $L^2(\mathbb{R}^n)$ if and only if $\sigma * \varphi$ vanishes at infinity, where $\varphi$ is the standard Gaussian. With suitable adaptations, one can replace $L^2(\mathbb{R}^n)$ with $M^{p,p}(\mathbb{R}^n)$ for $p \in [1,\infty)$; indeed, this approach was very recently taken in \cite{FH2025}*{Theorem 3.11}. 
\end{rem}

We apply Theorem \ref{AbstractCompactnessTheorem} to unify and extend known theorems for pseudodifferential and Fourier integral operators, and to obtain new results for three-parameter pseudodifferential operators. 

\subsection{\boldmath${\tau}$\unboldmath-pseudodifferential operators}
Our first application of Theorem \ref{AbstractCompactnessTheorem} regards $\tau$-pseudodifferential operators with symbols in weighted Sj\"ostrand classes.

\begin{thm}\label{TauPDOCompactness1}
    Let $p,q \in [1,\infty)$, $\nu$ be an admissible weight, $m$ be a $\nu$-moderate weight, and $\tau \in [0,1]$. If $\sigma \in M^{\infty,1}_{1\otimes \nu\circ J^{-1}}(\R^{2n})$, then $Op_{\tau}(\sigma)$ is bounded on $M^{p,q}_m(\mathbb{R}^n)$. If, additionally, $\sigma$ satisfies
    $$
        \lim_{(z,\zeta)\rightarrow\infty} V_{\varphi}\sigma(z,\zeta) = 0,
    $$
    then $Op_{\tau}(\sigma)$ is compact on $M^{p,q}_{m}(\mathbb{R}^n)$.
\end{thm}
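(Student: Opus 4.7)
The plan is to derive Theorem \ref{TauPDOCompactness1} from Theorem \ref{AbstractCompactnessTheorem} applied with the bi-Lipschitz diffeomorphism $\chi$ taken to be the identity on $\R^{2n}$, so that $M^{p,q}_{m,\chi}(\R^n) = M^{p,q}_m(\R^n)$. Under this choice, both conclusions of Theorem \ref{AbstractCompactnessTheorem} collapse exactly onto the desired boundedness and compactness statements on $M^{p,q}_m(\R^n)$.

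The first step is to invoke the standard matrix-coefficient identity for $\tau$-pseudodifferential operators: for any $g_1, g_2 \in \mathcal{S}_\vartheta(\R^n) \setminus \{0\}$, a direct computation (as in \cite{G2001}*{Lemma 14.5.1} for $\tau = 1/2$ and its extensions to general $\tau$ in the Cordero-Nicola-Rodino calculus) produces a window $\Phi_\tau \in \mathcal{S}_\vartheta(\R^{2n}) \setminus \{0\}$ built from $g_1, g_2, \tau$, together with an affine map $c_\tau : \R^{2n} \times \R^{2n} \to \R^{2n}$ interpolating between $z$ and $w$, such that
$$
    |\langle Op_\tau(\sigma) \pi(z) g_1, \pi(w) g_2 \rangle| = |V_{\Phi_\tau} \sigma(c_\tau(z, w), J(w - z))|,
$$
where $J(x, \xi) = (\xi, -x)$ denotes the symplectic involution on $\R^{2n}$. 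Setting $H(u) := \sup_{y \in \R^{2n}} |V_{\Phi_\tau} \sigma(y, J u)|$, window-independence of the modulation norm together with the change of variables $\zeta = J u$ (which has Jacobian $\pm 1$) translate the assumption $\sigma \in M^{\infty,1}_{1 \otimes \nu \circ J^{-1}}(\R^{2n})$ into $H \in L^1_\nu(\R^{2n})$. Combined with the matrix-coefficient identity, this yields $|\langle Op_\tau(\sigma) \pi(z) g_1, \pi(w) g_2 \rangle| \le H(w - z)$, which is precisely $(\nu, \mathrm{id}, g_1, g_2)$-localization. The first half of Theorem \ref{AbstractCompactnessTheorem} then delivers boundedness on $M^{p,q}_m(\R^n)$.

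For compactness, I would verify the $(\mathrm{id}, g_1, g_2)$-weak compactness condition from the vanishing of $V_\varphi \sigma$ at infinity. The pointwise bound $|V_{\Phi_\tau} \sigma| \lesssim |V_\varphi \sigma| * |V_{\Phi_\tau} \varphi|$ combined with the rapid decay of $V_{\Phi_\tau} \varphi$ transfers the vanishing at infinity from the Gaussian STFT to $V_{\Phi_\tau} \sigma$. As $|z| \to \infty$ with $w - z$ restricted to any bounded set, the first coordinate $c_\tau(z, w)$ escapes every compact subset of $\R^{2n}$ while $J(w - z)$ stays bounded, so the pair $(c_\tau(z, w), J(w - z))$ leaves every compact subset of $\R^{4n}$ and the matrix coefficient tends to $0$. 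This is the required diagonal decay of the Gabor matrix coefficients, and the second conclusion of Theorem \ref{AbstractCompactnessTheorem} then yields compactness on $M^{p,q}_m(\R^n)$.

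The principal technical obstacle is establishing the matrix-coefficient identity in a form compatible with the Gel'fand-Shilov-type framework $\mathcal{S}_\vartheta$ used throughout, since this governs both the admissible choice of $\Phi_\tau$ and the justification of window-independence for the weighted Sj\"ostrand norm. Once that identity is in hand, verifying localization and weak compactness is bookkeeping: the combination of the weight $1 \otimes \nu \circ J^{-1}$ with the appearance of $J(w - z)$ in the matrix coefficient is arranged so that the $\zeta$-variable weight is exactly what is needed to control the $(w - z)$-variable weight in the localization condition.
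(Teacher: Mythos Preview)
Your proposal is correct, and for the localization step it is essentially what the paper does: the matrix-coefficient identity you invoke is precisely the content of \cite{CNT2019}*{Theorem~4.1}, which the paper simply cites rather than re-derives.

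For weak compactness, however, your route and the paper's diverge. You exploit the same identity $|\langle Op_\tau(\sigma)\pi(z)g_1,\pi(w)g_2\rangle|=|V_{\Phi_\tau}\sigma(c_\tau(z,w),J(w-z))|$ a second time, observing that $c_\tau(z,w)\to\infty$ when $z\to\infty$ with $w-z$ bounded, and transferring the vanishing of $V_\varphi\sigma$ to $V_{\Phi_\tau}\sigma$ via the standard window-change convolution bound. The paper instead treats the case $\tau=1$ as a Fourier integral operator with phase $\Phi(x,\xi)=x\cdot\xi$: it proves a uniform $M^\infty$ bound on the matrix coefficients, uses Hilbert--Schmidt compactness for Schwartz symbols together with the density of $\mathcal{S}(\R^{2n})$ in $M^0(\R^{2n})$ to get the vanishing, and then reduces general $\tau$ to $\tau=1$ via the isomorphism $S_{\tau\to 1}$ of $M^0(\R^{2n})$. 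Your argument is more direct and self-contained for this particular theorem; the paper's detour through the FIO framework and the $S_{\tau_1\to\tau_2}$ calculus is less economical here but pays off because the FIO estimate (Proposition~\ref{FIOWBPestimate}) is needed anyway for Theorem~\ref{FIOCompactness}.
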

\noindent Observe that Theorem \ref{TauPDOCompactness1} contains Theorem A and Theorem B in the case $\nu \equiv m \equiv 1$. As the $\nu$-localization of $Op_{\tau}(\sigma)$ for $\sigma \in M^{\infty,1}_{1\otimes \nu\circ J^{-1}}(\mathbb{R}^{2n})$ was established by Cordero, Nicola, and Trapasso in \cite{CNT2019}*{Theorem 4.1}, we deduce Theorem \ref{TauPDOCompactness1} from Theorem \ref{AbstractCompactnessTheorem} by verifying that the vanishing of $V_{\varphi}\sigma$ implies $Op_{\tau}(\sigma)$ is weakly compact.  

We also address the compactness of $\tau$-pseudodifferential operators with symbols in the Wiener amalgam space $W(\mathcal{F}L^{\infty},L^1_{\nu \circ \mathcal{B}_{\tau}})(\mathbb{R}^{2n})$ as follows. Recall that Wiener amalgam spaces are defined by reversing the roles of time and frequency in the definition of modulation spaces, and they can be interpreted as the image of modulation spaces under the Fourier transform. 
\begin{thm}\label{TauPDOCompactness2}
    Let $p,q \in [1,\infty)$, $\nu$ be an admissible weight, $m$ be a $\nu$-moderate weight, and $\tau \in (0,1)$. If $\sigma \in W(\mathcal{F}L^{\infty},L^1_{\nu \circ \mathcal{B}_{\tau}})(\mathbb{R}^{2n})$, then $Op_{\tau}(\sigma)$ is bounded from $M^{p,q}_{m\circ \mathcal{U}_{\tau}}(\mathbb{R}^n)$ to $M^{p,q}_m(\mathbb{R}^n)$. If, additionally, $\sigma$ satisfies
    $$
        \lim_{(z,\zeta)\rightarrow\infty} V_{\varphi}\sigma(z,\zeta) = 0,
    $$
    then $Op_{\tau}(\sigma)$ is compact from $M^{p,q}_{m\circ \mathcal{U}_{\tau}}(\mathbb{R}^n)$ to $M^{p,q}_{m}(\mathbb{R}^n)$.
\end{thm}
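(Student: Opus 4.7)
The plan is to deduce Theorem \ref{TauPDOCompactness2} from Theorem \ref{AbstractCompactnessTheorem} in direct parallel with the proof of Theorem \ref{TauPDOCompactness1}, but with a nontrivial bi-Lipschitz diffeomorphism $\chi$. Since the boundedness is asserted from $M^{p,q}_{m\circ \mathcal{U}_\tau}(\mathbb{R}^n)$ to $M^{p,q}_m(\mathbb{R}^n)$, I would take $\chi = \mathcal{U}_\tau$ so that the source coincides with $M^{p,q}_{m,\chi}(\mathbb{R}^n)$; the weight $\nu \circ \mathcal{B}_\tau$ in the Wiener amalgam hypothesis then encodes the phase-space geometry along the graph of $\mathcal{U}_\tau$, exactly as $\nu \circ J^{-1}$ did in Theorem \ref{TauPDOCompactness1} (where $\chi$ was trivial).

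For the boundedness assertion the key step is to prove that $Op_{\tau}(\sigma)$ is $(\nu, \mathcal{U}_\tau, g_1, g_2)$-localized for some nonzero $g_1, g_2 \in \mathcal{S}_{\vartheta}(\mathbb{R}^n)$. This amounts to a pointwise estimate of the form
$$
    \bigl| \langle Op_{\tau}(\sigma) \pi(w) g_1, \pi(z) g_2 \rangle \bigr| \le H\bigl( z - \mathcal{U}_\tau(w) \bigr)
$$
with $H \in L^1_{\nu \circ \mathcal{B}_\tau}(\mathbb{R}^{2n})$ (or the appropriate Gabor-lattice sequence-space analogue). I would derive such an estimate by expressing the matrix coefficient as a pairing involving the integral kernel of $Op_{\tau}(\sigma)$, rewriting that kernel in terms of $\sigma$, and identifying the result with the local $\mathcal{F}L^{\infty}$ norm defining $W(\mathcal{F}L^{\infty}, L^1_{\nu \circ \mathcal{B}_\tau})(\mathbb{R}^{2n})$. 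The restriction $\tau \in (0,1)$ is genuinely needed here because $\mathcal{U}_\tau$ degenerates at the endpoints. Once localization is established, the first half of Theorem \ref{AbstractCompactnessTheorem} delivers the boundedness conclusion.

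For compactness, it remains to verify the $(\mathcal{U}_\tau, g_1, g_2)$-weak compactness condition, i.e., decay at infinity of the matrix coefficients in directions along the graph of $\mathcal{U}_\tau$. I would use the standard identity expressing $\langle Op_{\tau}(\sigma) \pi(w) g_1, \pi(z) g_2 \rangle$ as a modulated short-time Fourier transform of $\sigma$ against a tensor window $\Phi$ built from $g_1$ and $g_2$, evaluated at a point in $\mathbb{R}^{4n}$ that tends to infinity as $w \to \infty$ with $z - \mathcal{U}_\tau(w)$ bounded. The hypothesis $V_{\varphi}\sigma \to 0$ at infinity, transported to the window $\Phi$ via the STFT change-of-window formula, then yields the required decay. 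The second half of Theorem \ref{AbstractCompactnessTheorem} closes the argument.

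I expect the main obstacle to be the localization step, specifically pinning down the geometric correspondence between $\mathcal{U}_\tau$, $\mathcal{B}_\tau$, and the STFT of $\sigma$ that makes the pointwise estimate above hold with weight exactly $\nu \circ \mathcal{B}_\tau$. This is a Wiener-amalgam analogue of the Cordero--Nicola--Trapasso localization estimate invoked in the proof of Theorem \ref{TauPDOCompactness1}; once the geometric bookkeeping is in place, the weak-compactness verification and the final appeal to Theorem \ref{AbstractCompactnessTheorem} proceed along the same template as in Theorem \ref{TauPDOCompactness1}.
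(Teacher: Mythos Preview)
Your plan is correct and matches the paper's approach: apply Theorem \ref{AbstractCompactnessTheorem} with $\chi = \mathcal{U}_\tau$, using the identification $M^{p,q}_{m,\mathcal{U}_\tau}(\mathbb{R}^n) = M^{p,q}_{m\circ\mathcal{U}_\tau}(\mathbb{R}^n)$ (valid because $\mathcal{U}_\tau$ is block-diagonal, hence an $L^{p,q}$ isomorphism). The paper cites \cite{CNT2019}*{Theorem 4.3} directly for the $(\nu,\mathcal{U}_\tau)$-localization rather than reproving it, and for weak compactness it reuses Proposition \ref{STFT>WeakCpt} (the uniform estimate $\sup_{w}|\langle Op_\tau(\sigma)\pi(z)\varphi,\pi(w)\varphi\rangle|\to 0$, obtained via the FIO case $\tau=1$ and the isomorphism $S_{\tau\to 1}$ on $M^0$) instead of your direct STFT/change-of-window computation; both routes give what is needed.
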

\noindent As \cite{CNT2019}*{Theorem 4.3} asserts that the operators of Theorem \ref{TauPDOCompactness2} are $(\nu,\mathcal{U}_{\tau})$-localized, we obtain Theorem \ref{TauPDOCompactness2} from Theorem \ref{AbstractCompactnessTheorem} by showing that $V_{\varphi}\sigma(z,\zeta)\rightarrow0$ implies $Op_{\tau}(\sigma)$ is $\mathcal{U}_{\tau}$-weakly compact.

\subsection{Three-parameter pseudodifferential operators}
Although the majority of research on pseudodifferential operators concerns $\tau$-pseudodifferential operators (especially $Op^{\text{KN}}(\sigma)$ and $Op^{\text{W}}(\sigma)$), Calder\'on and Vaillancourt's original work \cite{CV1972} actually included the more general three-parameter pseudodifferential operators $T_{\sigma}$ given for $\sigma:\mathbb{R}^{3n}\rightarrow \mathbb{C}$ by 
$$
    T_{\sigma}f(x) = \iint_{\mathbb{R}^{2n}} \sigma(x,y,\xi)e^{2\pi i(x-y)\cdot \xi}f(y)\,dyd\xi.
$$
In this generality, we show that if $\sigma$ is in a weighted Sj\"ostrand class, then $T_{\sigma}$ is $\nu$-localized, and deduce the following as a consequence of Theorem \ref{AbstractCompactnessTheorem}. 
\begin{thm}\label{GeneralPDOCompactness}
Let $p,q \in [1,\infty)$, $\nu$ be an admissible weight of polynomial growth, $m$ be a $\nu$-moderate weight, and $B(t_1,t_2,t_3) := (-t_3,t_1+t_2)$. If $\sigma \in M_{1 \otimes \nu\circ B}^{\infty,1}(\R^{3n})$, then $T_{\sigma}$ is bounded on $M^{p,q}_m(\mathbb{R}^n)$. If, additionally, $\sigma$ satisfies 
$$
    \lim_{z \rightarrow \infty} \sigma(\cdot-z_1, \cdot-z_1, \cdot-z_2) = 0
$$
in $\mathcal{S}'(\R^{3n})$, then $T_{\sigma}$ is compact on $M_m^{p,q}(\R^n)$.
\end{thm}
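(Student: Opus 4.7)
The plan is to verify the hypotheses of Theorem \ref{AbstractCompactnessTheorem} with $\chi$ equal to the identity on $\R^{2n}$: boundedness on $M^{p,q}_m(\R^n)$ will follow from showing that $T_\sigma$ is $(\nu,\mathrm{id},g_1,g_2)$-localized, and compactness from the additional $(\mathrm{id},g_1,g_2)$-weak compactness produced by the vanishing hypothesis.

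The central computation is the matrix coefficient formula. Writing $z=(x_1,\xi_1)$ and $w=(x_2,\xi_2)$ and unfolding the definitions of $T_\sigma$ and of the time-frequency shifts, one obtains
\begin{equation*}
\langle T_\sigma \pi(z) g_1, \pi(w) g_2\rangle = \iiint_{\R^{3n}} \sigma(x,y,\xi)\, e^{2\pi i[(x-y)\xi + \xi_1 y - \xi_2 x]} g_1(y-x_1)\overline{g_2(x-x_2)}\, dx\, dy\, d\xi.
\end{equation*}
To extract localization I would rewrite the phase as $(\xi-\xi_2)x - (\xi-\xi_1)y$, translate via $u = x-x_2$, $v = y-x_1$, $\eta = \xi-\xi_2$, and absorb the residual chirp $e^{2\pi i(u-v)\eta}$ into the window. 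Up to a unimodular phase this recasts the pairing as an STFT $V_\Phi \sigma$ on $\R^{3n}$ at a point of $\R^{6n}$ whose frequency coordinate is a linear function of $w-z$ dual to the map $B$; concretely, the phase components linear in the integration variables are $(\xi_2-\xi_1)y$ and translates thereof, producing frequency-side dependence on $-\eta_3$ and $\eta_1+\eta_2$, which is exactly the action of $B$. Since $\sigma \in M^{\infty,1}_{1\otimes \nu\circ B}(\R^{3n})$, the position-supremum of $V_\Phi \sigma$ is integrable against $\nu\circ B$, and pushing this estimate through the change of variables yields $|\langle T_\sigma \pi(z)g_1,\pi(w)g_2\rangle|\leq H(w-z)$ with $\nu H \in L^1(\R^{2n})$. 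This is precisely the defining property of $(\nu,\mathrm{id},g_1,g_2)$-localization, so the first part of Theorem \ref{AbstractCompactnessTheorem} gives the boundedness.

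For weak compactness only the diagonal $z=w=(x_0,\xi_0)$ enters. Setting $x_1=x_2=x_0$ and $\xi_1=\xi_2=\xi_0$ collapses the phase to $(x-y)(\xi-\xi_0)$, and the translation $u=x-x_0$, $v=y-x_0$, $\eta=\xi-\xi_0$ produces
\begin{equation*}
\langle T_\sigma \pi(w) g_1, \pi(w) g_2\rangle = \bigl\langle \sigma(\cdot + x_0, \cdot + x_0, \cdot + \xi_0),\, \Phi\bigr\rangle_{\mathcal{S}',\mathcal{S}},
\end{equation*}
where $\Phi(u,v,\eta) := e^{2\pi i(u-v)\eta} g_1(v)\overline{g_2(u)}$ belongs to $\mathcal{S}(\R^{3n})$. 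The polynomial-growth assumption on $\nu$ ensures $\sigma \in \mathcal{S}'(\R^{3n})$, so the distributional pairing is well defined, and the hypothesis $\sigma(\cdot-z_1,\cdot-z_1,\cdot-z_2)\to 0$ in $\mathcal{S}'$ (applied with $z_1=-x_0$, $z_2=-\xi_0$) is exactly the statement that this pairing tends to $0$ as $(x_0,\xi_0)\to\infty$. This verifies weak compactness, and the second half of Theorem \ref{AbstractCompactnessTheorem} closes the argument.

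The main obstacle is the localization step: the bookkeeping that identifies the matrix coefficient with the STFT of $\sigma$ and tracks how the combined phase $e^{2\pi i[(x-y)\xi + \xi_1 y - \xi_2 x]}$ distributes between the window (absorbing the chirp $e^{2\pi i(u-v)\eta}$) and the STFT modulation, while verifying that the emergent frequency-side weight is precisely $\nu\circ B$. This is in the spirit of the STFT calculations for $\tau$-pseudodifferential operators, where the analogous role is played by $J$ or $\mathcal{U}_\tau$, but the three-parameter structure requires keeping track of both position variables $x$ and $y$ independently. By contrast, the weak compactness step is essentially immediate from the diagonal formula once the vanishing hypothesis is read against a Schwartz test function of the form $\Phi$ above.
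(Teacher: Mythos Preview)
Your plan has a genuine gap in both steps, stemming from the same source: the function $\Phi(u,v,\eta) = e^{2\pi i(u-v)\eta} g_1(v)\overline{g_2(u)}$ is \emph{not} in $\mathcal{S}(\R^{3n})$. For fixed $u,v$ with $g_1(v)g_2(u)\neq 0$ it has no decay in $\eta$, so it is not even in $L^2(\R^{3n})$, let alone Schwartz or in $M^1$. Consequently the pairing $\langle \sigma(\cdot+x_0,\cdot+x_0,\cdot+\xi_0),\Phi\rangle_{\mathcal{S}',\mathcal{S}}$ in your weak-compactness step is undefined, and the expression $V_\Phi\sigma$ in your localization step does not witness membership in $M^{\infty,1}_{1\otimes\nu\circ B}$: the window-change lemma that makes modulation-space norms independent of window requires both windows to lie in a suitable test space. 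This is the essential difference from the $\tau$-pseudodifferential case you invoke, where the cross-$\tau$-Wigner distribution of two Gaussians is again a Gaussian and the chirp is genuinely absorbed; in the three-parameter setting the extra $\eta$-direction carries no compensating decay.

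There is also a smaller issue in the weak-compactness step: the definition asks for $\sup_{w\in z+K}|\langle T\pi(w)g_1,\pi(z)g_2\rangle|\to 0$ for every compact $K$, not merely the diagonal pairing $w=z$. Even granting your formula, you would need uniformity over $w\in z+K$, which amounts to a compact family of test functions and requires an additional argument.

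The paper resolves both difficulties by avoiding the STFT representation altogether. It establishes an atomic decomposition $\sigma=\sum_{k\in\Z^{3n}}\sigma_k$ with $\widehat{\sigma_k}$ supported in $k+K$ and $\|\sigma_k\|_{L^\infty}\lesssim \nu(Bk)^{-1}L(k)$, $\sum_k L(k)<\infty$. For each atom the compact Fourier support gives control over all derivatives, so integration by parts in the oscillatory integral yields the matrix-coefficient decay $|\langle T_{\sigma_k}\pi(z)\varphi,\pi(w)\varphi\rangle|\lesssim \|\sigma_k\|_{L^\infty}\langle w-z+(k_3,-k_1-k_2)\rangle^{-N}$, from which $\nu$-localization of $T_\sigma$ follows by summing. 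Weak compactness is then checked atom by atom: the vanishing hypothesis on $\sigma$ transfers to pointwise vanishing of $\partial^\alpha\sigma_k(x-z_1,y-z_1,\xi-z_2)$, and dominated convergence (with the required uniformity in $w$) finishes the argument. The chirp is never absorbed into a window; instead the oscillatory integral is controlled directly via the smoothness of the atoms.
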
 

Our strategy for proving Theorem \ref{GeneralPDOCompactness} involves first establishing an atomic decomposition for the weighted Sj\"ostrand class $M_{1 \otimes \nu \circ B}^{\infty,1}(\R^{3n})$ with atoms having Fourier transform supported near some $k \in \Z^{3n}$ (Theorem \ref{AtomicDecomp} below). 
We then show that operators with atomic symbols are $\nu$-localized and use the atomic decomposition to conclude the same for $T_{\sigma}$ with $\sigma \in M_{1 \otimes \nu \circ B}^{\infty,1}(\R^{3n})$. The result follows from Theorem \ref{AbstractCompactnessTheorem} upon exploiting the smoothness of atomic symbols to show that the vanishing of the translates of $\sigma$ in $\mathcal{S}'(\mathbb{R}^{3n})$ implies weak compactness, and lifting this fact to general symbols through the atomic decomposition.

\begin{rem}
    The vanishing of $\sigma$ imposed in Theorem \ref{GeneralPDOCompactness} is natural in the sense that, for two-parameter symbols $\sigma \in M^{\infty,1}(\R^{2n})$, we have $\lim_{(z,\zeta)\rightarrow \infty}V_{\varphi}\sigma(z,\zeta)=0$ if and only if $\lim_{(x,\xi) \rightarrow \infty} \sigma(\cdot-x,\cdot-\xi) = 0$ in $\mathcal{S}'(\R^{2n})$. Indeed, the reproducing formula \eqref{ReproducingFormula} and dominated convergence show that the vanishing of $V_{\varphi}\sigma$ implies that $\sigma(\cdot-x,\cdot-\xi) \rightarrow 0$ in $\mathcal{S}'(\R^{2n})$. Conversely, $\sigma(\cdot-x,\cdot-\xi) \rightarrow 0$ implies that $Op^{\text{KN}}(\sigma) = Op^{\text{W}}(S_{1\rightarrow \frac{1}{2}}\sigma)$ is weakly compact, and hence compact on $L^2(\mathbb{R}^n)$ by Theorem \ref{TauPDOCompactness1}. Thus $V_{\varphi}(S_{1\rightarrow\frac{1}{2}}\sigma)(z,\zeta) \rightarrow 0$ as $(z,\zeta) \rightarrow \infty$ by \cite{FG2007}*{Theorem 4.6}, which implies $V_{\varphi}\sigma(z,\zeta) \rightarrow 0$ since $S_{1\rightarrow\frac{1}{2}}$ is an isomorphism on $M^{0}(\mathbb{R}^{2n})$; see Definition \ref{M0Definition} and Definition \ref{TauTransform} below.
\end{rem}

\subsection{Fourier integral operators}

The Kohn-Nirenberg pseudodifferential operator is also the prototypical example in the class of Fourier integral operators. For a phase $\Phi:\mathbb{R}^{2n}\rightarrow\mathbb{R}$ and symbol $\sigma:\mathbb{R}^{2n}\rightarrow\mathbb{C}$, the Fourier integral operator $T_{\sigma,\Phi}$ is given by
$$
    T_{\sigma,\Phi}f(x) = \int_{\mathbb{R}^n} \sigma(x,\xi) e^{2\pi i \Phi(x,\xi)}\widehat{f}(\xi)\,d\xi.
$$

We obtain the following result as a consequence of Theorem \ref{AbstractCompactnessTheorem}. 
\begin{thm}\label{FIOCompactness}
    Let $p,q \in [1,\infty)$, $s\ge 0$, $S > s+2n$, and $m$ be a $\nu_s$-moderate weight. If $\Phi$ is a tame phase and $\sigma \in M^{\infty,1}_{1\otimes \nu_S}(\R^{2n})$, then $T_{\sigma,\Phi}$ is bounded from $M_{m,\chi}^{p,q}(\R^n)$ to $M_m^{p,q}(\R^n)$. If, additionally, $\sigma$ satisfies
    $$
        \lim_{(z,\zeta)\rightarrow\infty} V_{\varphi}\sigma(z,\zeta) = 0,
    $$
    then $T_{\sigma,\Phi}$ is compact from $M^{p,q}_{m,\chi}(\mathbb{R}^n)$ to $M^{p,q}_m(\mathbb{R}^n)$. 
\end{thm}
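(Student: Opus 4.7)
The plan is to deduce Theorem \ref{FIOCompactness} from Theorem \ref{AbstractCompactnessTheorem}. Let $\chi$ denote the canonical transformation associated with the tame phase $\Phi$; the tameness hypothesis guarantees that $\chi$ is a bi-Lipschitz diffeomorphism of $\R^{2n}$. There are two steps: first, verify that $T_{\sigma,\Phi}$ is $(\nu_s,\chi,g_1,g_2)$-localized for $\sigma \in M^{\infty,1}_{1 \otimes \nu_S}(\R^{2n})$, and second, under the vanishing hypothesis on $V_\varphi\sigma$, verify that $T_{\sigma,\Phi}$ is $(\chi,g_1,g_2)$-weakly compact.

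The localization should follow from standard time-frequency analysis of FIOs with tame phases, in the spirit of \cite{CNT2019}*{Theorem 4.1}. Unfolding the defining oscillatory integral for $T_{\sigma,\Phi}$ and linearizing $\Phi$ produces a pointwise bound
$$
|\langle T_{\sigma,\Phi}\pi(w)g_1,\pi(z)g_2\rangle| \lesssim H(z-\chi(w))
$$
with $H \in L^1_{\nu_s}(\R^{2n})$; the exponent gap $S > s+2n$ is precisely what makes the resulting envelope integrable against $\nu_s$ on $\R^{2n}$, since after accounting for the factor $\nu_S^{-1}$ controlling $\sigma$ in $M^{\infty,1}_{1\otimes\nu_S}$ one retains a surplus power strictly greater than $2n$. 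With this localization in hand, the first conclusion of Theorem \ref{AbstractCompactnessTheorem} immediately delivers the boundedness assertion.

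For weak compactness, the task is to show
$$
|\langle T_{\sigma,\Phi}\pi(w)g_1,\pi(\chi(w))g_2\rangle| \to 0 \text{ as } |w|\to\infty.
$$
The strategy is to rewrite this matrix coefficient, using the oscillatory integral definition of $T_{\sigma,\Phi}$ together with the linearization of $\Phi$, as an STFT of $\sigma$ against an auxiliary window $\Psi_w$ built from $g_1$, $g_2$, and the Taylor data of $\Phi$ at a reference point, evaluated at a trajectory $\Xi(w) \in \R^{4n}$ that escapes to infinity with $w$. The tameness of $\Phi$ gives uniform control on $\Psi_w$ in a weighted $M^{1,1}$-type norm, so the change-of-window formula together with the reproducing formula \eqref{ReproducingFormula} and dominated convergence transfer the hypothesis $V_\varphi \sigma \to 0$ into the desired vanishing along the trajectory. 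The main obstacle will be this oscillatory-integral identification and the uniform-in-$w$ window control; once these are secured, the second conclusion of Theorem \ref{AbstractCompactnessTheorem} yields the compactness assertion.
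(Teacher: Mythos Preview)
Your overall two-step plan---verify $(\nu_s,\chi)$-localization and $\chi$-weak compactness, then invoke Theorem~\ref{AbstractCompactnessTheorem}---is exactly the paper's strategy. For localization the paper simply cites \cite{CGN2012}*{Theorem 3.3} (the FIO almost-diagonalization result; \cite{CNT2019} treats $\tau$-pseudodifferential operators, not FIOs), and your oscillatory-integral sketch is essentially a paraphrase of what that reference proves.

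The genuine divergence is in the weak-compactness step. You propose to identify the matrix coefficient $\langle T_{\sigma,\Phi}\pi(w)g_1,\pi(\chi(w))g_2\rangle$ directly as an STFT $V_{\Psi_w}\sigma(\Xi(w))$ with $\Xi(w)\to\infty$ and moving windows $\Psi_w$ under uniform control, then transfer the hypothesis $V_\varphi\sigma\to 0$ via the change-of-window formula and dominated convergence. The paper (Proposition~\ref{FIOWBPestimate}) takes a shorter route: it first establishes the uniform bound
\[
\sup_{z,w\in\R^{2n}}|\langle T_{\sigma,\Phi}\pi(z)\varphi,\pi(w)\varphi\rangle|\lesssim\|\sigma\|_{M^\infty(\R^{2n})},
\]
then observes that for $\sigma\in\mathcal S(\R^{2n})$ the operator $T_{\sigma,\Phi}$ is Hilbert--Schmidt on $L^2(\R^n)$, so the weak convergence $\pi(z)\varphi\rightharpoonup 0$ forces $\|T_{\sigma,\Phi}\pi(z)\varphi\|_{L^2}\to 0$; the case of general $\sigma\in M^0(\R^{2n})$ then follows by density of $\mathcal S$ in $M^0$ with respect to the $M^\infty$ norm. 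This actually yields the stronger conclusion $\lim_{z\to\infty}\sup_{w\in\R^{2n}}|\langle T_{\sigma,\Phi}\pi(z)\varphi,\pi(w)\varphi\rangle|=0$, uniform over \emph{all} $w$, which immediately gives $\chi$-weak compactness for any bi-Lipschitz $\chi$ and completely sidesteps the oscillatory-integral identification you flagged as the main obstacle. Your direct approach may well succeed, but the density-plus-compactness argument is cleaner and more robust; note also that the condition you wrote down is only the $K=\{0\}$ case of $\chi$-weak compactness---you would still owe the version uniform over $w'\in z+K$, a point the paper's stronger statement renders moot.
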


\noindent Note that Theorem \ref{FIOCompactness} recovers the $p<\infty$ case of \cite{FGP2019}*{Theorem 3.12} since $M_{m,\chi}^{p,p}(\R^n) = M_{m \circ \chi}^{p,p}(\R^n)$. Moreover, for general tame phases and $p \neq q$, $T_{\sigma,\Phi}$ might not even be bounded from $M_{m \circ \chi}^{p,q}(\R^n)$ to $M_m^{p,q}(\R^n)$; see \cite{CNR2010}*{Proposition 7.1} -- Theorem \ref{FIOCompactness} gives an appropriate replacement for boundedness (and compactness) in this off-diagonal case.

The paper is organized as follows. In Section \ref{AbstractCompactnessSection}, we provide the setup and prove Theorem \ref{AbstractCompactnessTheorem}. In Section \ref{ApplicationsSection}, we apply Theorem \ref{AbstractCompactnessTheorem} to Fourier integral operators by proving Theorem \ref{FIOCompactness}, to $\tau$-pseudodifferential operators by proving Theorem \ref{TauPDOCompactness1} and Theorem \ref{TauPDOCompactness2}, and to three-parameter pseudodifferential operators by proving Theorem \ref{GeneralPDOCompactness}.


\section{Compactness of localized operators on pullback modulation spaces}\label{AbstractCompactnessSection}

Let $\pi$ denote the time-frequency shift defined for $z = (x,\xi) \in \mathbb{R}^{2n}$ by
$$
    \pi(z) := M_{\xi} T_x,
$$
where $T_x$ and $M_{\xi}$ are the translation and modulation operators given for $f \in L^2(\R^n)$ by
$$
    T_xf(y) := f(y-x) \quad\text{and}\quad M_{\xi}f(y) := e^{2\pi i \xi \cdot y}f(y).
$$
We work with the space of test functions
\[
    {\mathcal{S}_{\vartheta}}(\mathbb{R}^{n}) := \{ f \in L^2(\R^n) : \|f\|_{\mathcal{S}_{\vartheta}}:=\|\langle f, \pi(\cdot)\varphi\rangle \|_{L^1_{\vartheta}(\mathbb{R}^{2n})}<\infty\},
\]
where $\varphi(x) := 2^{\frac{n}{4}}e^{-\pi|x|^2}$ and $\vartheta(z) := e^{|z|}$. Note that $\varphi \in \mathcal{S}_{\vartheta}(\mathbb{R}^n)$ and that $\varphi$ can be replaced with any nonzero window $g \in \mathcal{S}_{\vartheta}(\mathbb{R}^n)$ to yield the same space with an equivalent norm, see \cite{G2001}*{Proposition 11.4.2c}. 

Given $g \in \mathcal{S}_{\vartheta}(\mathbb{R}^n)$, the short-time Fourier transform of $f \in \mathcal{S}_{\vartheta}'(\mathbb{R}^n)$ is defined by 
$$
    V_gf(z) := \langle f,\pi(z)g\rangle.
$$
It is well known that the time-frequency shifts of the Gaussian window $\varphi$ form a Parseval frame for $L^2(\mathbb{R}^n)$ (called the Gabor frame) in the sense that
$$
    \|f\|_{L^2(\mathbb{R}^n)}^2 = \int_{\mathbb{R}^{2n}} |V_{\varphi}f(z)|^2\,dz
$$
for all $f \in L^2(\mathbb{R}^n)$. More generally, we have the following reproducing formula for any $g_1,g_2 \in \mathcal{S}_\vartheta(\mathbb{R}^n)$ with $\langle g_1,g_2 \rangle \neq 0$:
\begin{align}\label{ReproducingFormula}
    \langle f, h \rangle = \langle g_2,g_1 \rangle^{-1} \int_{\R^{2n}} \langle f, \pi(z) g_1 \rangle \langle \pi(z) g_2 , h \rangle \, dz
\end{align}
for all $f \in \mathcal{S}_\vartheta(\mathbb{R}^n)$ and $h \in \mathcal{S}_\vartheta'(\mathbb{R}^n)$. Indeed, $f = \langle g_1, g_1 \rangle^{-1}  \int_{\R^{2n}} \langle f, \pi(z) g_1 \rangle \pi(z)g_2 \,dz$ holds in the weak $L^2(\mathbb{R}^{2n})$ sense, and since the defining integral converges absolutely in $\mathcal{S}_\vartheta(\mathbb{R}^n)$ norm, we can interchange the action of a distribution. 

A weight is a positive measurable function. We say a weight $\nu$ on $\mathbb{R}^{2n}$ is admissible if 
$\nu(0)=1$, $\nu$ is continuous, $\nu$ is even in each coordinate:
$$
    \nu(\pm z_1,\ldots,\pm z_{2n}) = \nu(z_1,\ldots,z_{2n})
$$ 
for all $z = (z_1,\ldots, z_{2n}) \in \mathbb{R}^{2n}$, $\nu$ is submultiplicative:
$$
    \nu(z+w) \leq \nu(z)\nu(w)
$$ 
for all $z,w \in \mathbb{R}^{2n}$, and $\nu$ satisfies 
$$
    \lim_{n\rightarrow\infty} \nu(nz)^{\frac{1}{n}} = 1
$$
for all $z \in \mathbb{R}^{2n}$. 
We say that a weight $m$ is $\nu$-moderate if
$$
    m(z+w) \lesssim m(z)\nu(w)
$$
for all $z,w\in\mathbb{R}^{2n}$. For $s \ge 0$, we write $\nu_s(z) = (1+|z|^2)^{\frac{s}{2}}$. We say that a weight $\nu$ is of polynomial growth if $\nu(z) \lesssim \nu_s(z)$ for some $s \ge 0$ and all $z \in \mathbb{R}^{2n}$.

Given $p,q \in [1,\infty]$ and a weight $m$ on $\R^{2n}$, the mixed Lebesgue space $L^{p,q}_m(\mathbb{R}^{2n})$ is the space of $f:\mathbb{R}^{2n}\rightarrow\mathbb{C}$ such that
$$
    \|f\|_{L^{p,q}_m(\mathbb{R}^{2n})}:= \bigg(\int_{\mathbb{R}^n}\bigg(\int_{\mathbb{R}^n} |f(x,\xi)|^pm(x,\xi)^p\,dx\bigg)^{\frac{q}{p}}\,d\xi\bigg)^{\frac{1}{q}}<\infty
$$
with the usual interpretation when $p$ or $q$ equals $\infty$. We write $L^p_m(\mathbb{R}^{2n}) = L^{p,p}_m(\mathbb{R}^{2n})$ and omit the subscript $m$ when $m\equiv 1$.

\begin{defin}
    Given $p,q\in[1,\infty]$, $g \in {\mathcal{S}_{\vartheta}}(\mathbb{R}^n) \setminus \{0\}$, a bi-Lipschitz diffeomorphism $\chi : \R^{2n} \rightarrow \R^{2n}$, an admissible weight $\nu$, and a $\nu$-moderate weight $m$, the pullback modulation space $M_{m,\chi}^{p,q}(\R^n)$ is defined to be the space of all $f \in {\mathcal{S}_{\vartheta}}'(\mathbb{R}^n)$ such that
    \[
        \|f\|_{M^{p,q}_{m,\chi}(\mathbb{R}^n)}:= \|V_{g} f \circ \chi^{-1}\|_{L_m^{p,q}(\R^{2n})} < \infty.
    \]
    We write $M^{p}_{m,\chi}(\mathbb{R}^n) = M^{p,p}_{m,\chi}(\mathbb{R}^{n})$, omit the subscript $m$ when $m \equiv 1$, and omit the subscript $\chi$ when $\chi$ is the identity. 
\end{defin}
\begin{rem}\label{WeightComposition}
    If composition with $\chi$ is an isomorphism on $L^{p,q}(\R^{2n})$, then $M_{m,\chi}^{p,q}(\R^n) = M_{m \circ \chi}^{p,q}(\R^n)$. In particular, this condition is automatic when $p=q$ since $\chi$ is bi-Lipschitz.
\end{rem}

We observe that the space $M^{p,q}_{m,\chi}(\mathbb{R}^n)$ is independent of the choice of nonzero window $g \in\mathcal{S}_{\vartheta}(\mathbb{R}^n)$ and that different choices of window define equivalent norms. Moreover, $M^{p,q}_{m,\chi}(\mathbb{R}^n)$ is a Banach space that embeds continuously into $\mathcal{S}_{\vartheta}'(\mathbb{R}^n) = M^{\infty}_{\vartheta^{-1}}(\mathbb{R}^n)$ and contains $\mathcal{S}_{\vartheta}(\mathbb{R}^n) = M^1_{\vartheta}(\mathbb{R}^n)$ as a dense subset. 
We first record a couple of useful lemmata.
\begin{lemma}\label{LemmaOne}
    If $f, g \in \mathcal{S}_{\vartheta}(\mathbb{R}^n)\setminus\{0\}$ 
    and $z \in \mathbb{R}^{2n}$, then 
    $$
        |V_gf(z)| \lesssim \|f\|_{\mathcal{S}_{\vartheta}(\mathbb{R}^n)} \|g\|_{\mathcal{S}_{\vartheta}(\mathbb{R}^n)} \vartheta(z)^{-1}.
    $$
\end{lemma}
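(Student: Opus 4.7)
The plan is to reduce the bound on $|V_g f(z)|$ to the defining $L^1_\vartheta$-norm of the STFT with respect to the fixed Gaussian window $\varphi$, by invoking the reproducing formula \eqref{ReproducingFormula} and then exploiting the submultiplicativity of $\vartheta$ together with the rapid decay of $V_\varphi\varphi$.

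First I would apply \eqref{ReproducingFormula} with $g_1 = g_2 = \varphi$ (noting $\langle \varphi,\varphi\rangle = \|\varphi\|_{L^2}^2 = 1$) and $h = \pi(z)g$ to get
\[
    V_g f(z) \;=\; \langle f, \pi(z)g \rangle \;=\; \int_{\R^{2n}} V_\varphi f(w) \, \overline{V_\varphi(\pi(z)g)(w)} \, dw.
\]
A short direct computation of $V_\varphi(\pi(z)g)(w)$ yields the covariance identity $|V_\varphi(\pi(z)g)(w)| = |V_\varphi g(w-z)|$, so
\[
    |V_g f(z)| \;\le\; \int_{\R^{2n}} |V_\varphi f(w)| \, |V_\varphi g(w-z)| \, dw.
\]

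Next I would use that $\vartheta$ is even and submultiplicative to write $\vartheta(z) \le \vartheta(w)\,\vartheta(w-z)$, multiply through, and factor the integrand, obtaining
\[
    \vartheta(z)\,|V_g f(z)| \;\le\; \|V_\varphi f\|_{L^1_\vartheta(\R^{2n})} \cdot \|V_\varphi g\|_{L^\infty_\vartheta(\R^{2n})} \;=\; \|f\|_{\mathcal{S}_\vartheta} \cdot \|V_\varphi g\|_{L^\infty_\vartheta(\R^{2n})}.
\]
It thus remains to control $\|V_\varphi g\|_{L^\infty_\vartheta}$ by $\|g\|_{\mathcal{S}_\vartheta}$. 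For this I would re-apply \eqref{ReproducingFormula} to $V_\varphi g(w) = \langle g, \pi(w)\varphi\rangle$, giving the pointwise bound $|V_\varphi g(w)| \le \int |V_\varphi g(u)|\, |V_\varphi \varphi(w-u)|\, du$, and then use the same submultiplicativity trick to conclude
\[
    \vartheta(w)\,|V_\varphi g(w)| \;\le\; \|g\|_{\mathcal{S}_\vartheta} \cdot \|V_\varphi \varphi\|_{L^\infty_\vartheta}.
\]
Since $\varphi$ is Gaussian, $V_\varphi\varphi$ has Gaussian decay, so $\|V_\varphi\varphi\|_{L^\infty_\vartheta} = \sup_z e^{|z|}|V_\varphi\varphi(z)| < \infty$, and combining the two estimates finishes the proof.

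There is no real obstacle here; the only care needed is the bookkeeping in the covariance identity for $V_\varphi(\pi(z)g)$ and recognizing that applying the reproducing-formula/submultiplicativity argument twice is what upgrades the $L^1_\vartheta$-assumption on $V_\varphi g$ to the $L^\infty_\vartheta$-bound demanded by the pointwise estimate on $V_g f(z)$.
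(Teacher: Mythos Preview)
Your proof is correct and follows essentially the same route as the paper's: both apply the reproducing formula \eqref{ReproducingFormula} with the Gaussian window, use the covariance identity and the submultiplicativity of $\vartheta$ to extract $\vartheta(z)^{-1}$, and arrive at the intermediate bound $\|f\|_{\mathcal{S}_\vartheta}\,\|V_\varphi g\|_{L^\infty_\vartheta}$. The only difference is that the paper then invokes the continuous embedding $\mathcal{S}_\vartheta(\R^n)\hookrightarrow M^\infty_\vartheta(\R^n)$ from \cite{G2001}*{Theorem 12.1.9} to control $\|V_\varphi g\|_{L^\infty_\vartheta}\lesssim\|g\|_{\mathcal{S}_\vartheta}$, whereas you prove this embedding directly by a second pass through the reproducing formula and the Gaussian decay of $V_\varphi\varphi$---a self-contained but equivalent step.
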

\begin{proof}
    By the reproducing formula \eqref{ReproducingFormula} and the continuous embedding ${\mathcal{S}_{\vartheta}}(\mathbb{R}^n) \hookrightarrow M_\vartheta^\infty(\mathbb{R}^n)$ of \cite{G2001}*{Theorem 12.1.9}, we have
    \begin{align*}
        |V_gf(z)|= |\langle f, \pi(z) g \rangle | &\leq \int_{\R^{2n}}  |\langle f, \pi(w) \varphi \rangle \langle \pi(w) \varphi , \pi(z) g \rangle | \, dw\\
        &=\int_{\R^{2n}}  |\langle f, \pi(w) \varphi \rangle| \vartheta(w) |\langle \pi(w-z) \varphi , g \rangle | \frac{\vartheta(w-z)}{\vartheta(w-z)}\vartheta(w)^{-1} \, dw\\
        &\leq \int_{\R^{2n}}  |\langle f, \pi(w) \varphi \rangle| \vartheta(w) |\langle \pi(w-z) \varphi , g \rangle | \vartheta(w-z)\vartheta(z)^{-1} \, dw\\
        &\leq \|f\|_{\mathcal{S}_{\vartheta}(\mathbb{R}^n)}\|g\|_{M_{\vartheta}^\infty(\mathbb{R}^n)} \vartheta(z)^{-1}\\
        &\lesssim \|f\|_{\mathcal{S}_{\vartheta}(\mathbb{R}^n)}\|g\|_{\mathcal{S}_{\vartheta}(\mathbb{R}^n)} \vartheta(z)^{-1}
    \end{align*}
    for any $z \in \mathbb{R}^{2n}$.
\end{proof}

\begin{lemma}\label{LemmaTwo}
    If $\nu$ is an admissible weight and $\delta >0$, then $\nu(z) \lesssim e^{\delta |z|}$ for all $z \in \mathbb{R}^{2n}$.
\end{lemma}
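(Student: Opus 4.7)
\textbf{Proof plan for Lemma \ref{LemmaTwo}.} The statement is a standard consequence of the Gelfand-Raikov-Shilov (GRS) condition $\lim_n \nu(nz)^{1/n}=1$ for submultiplicative weights; I will argue by a compactness/covering argument on the unit sphere combined with submultiplicativity. The first observation is that, since $\nu$ is continuous, it is bounded on every compact subset of $\mathbb{R}^{2n}$. In particular, for any integer $N$, the quantity
$$
    M_N := \sup\{\nu(y) : |y| \leq N\}
$$
is finite.

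Next, I fix $\delta>0$ and use the GRS condition together with continuity to build a finite cover of the unit sphere. For each $z \in S^{2n-1}$, the GRS condition gives an integer $n_z \geq 1$ with $\nu(n_z z) < e^{\delta n_z/2}$. By continuity of the map $y \mapsto \nu(n_z y)$ at the point $z$, there is an open neighborhood $U_z \subset S^{2n-1}$ of $z$ on which $\nu(n_z y) < e^{\delta n_z/2}$ still holds. Since $S^{2n-1}$ is compact, I can extract a finite subcover $U_{z_1},\ldots,U_{z_k}$ with corresponding integers $n_1,\ldots,n_k$, and I set $N := \max_j n_j$ so that $M_N < \infty$ from the previous step.

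Now I combine these two ingredients. For any nonzero $w \in \mathbb{R}^{2n}$, write $w = r \hat w$ with $r = |w|$ and $\hat w \in S^{2n-1}$, pick $j$ with $\hat w \in U_{z_j}$, and perform the integer division $r = m\, n_j + t$ with $m = \lfloor r/n_j\rfloor \in \mathbb{N}_0$ and $t \in [0,n_j)$. Then $w = m(n_j \hat w) + t\hat w$, and iterated submultiplicativity yields
$$
    \nu(w) \leq \nu(n_j \hat w)^m\, \nu(t \hat w) \leq e^{\delta n_j m/2}\, M_N \leq M_N\, e^{\delta |w|/2},
$$
where the last inequality uses $n_j m \leq r = |w|$ and $|t \hat w| = t < n_j \leq N$. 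Since $e^{\delta|w|/2} \leq e^{\delta|w|}$, this is the desired estimate, with the case $w=0$ handled trivially by $\nu(0)=1$.

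\emph{Expected obstacle.} The argument is routine once one has the right decomposition, so there is no real difficulty; the only subtle point is arranging the cover so that both the exponents $n_j$ and the ``remainder'' translates $t\hat w$ stay in a common compact set, which is what allows $M_N$ to be chosen independently of $w$. Everything else is clean bookkeeping with submultiplicativity.
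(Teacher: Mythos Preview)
Your proof is correct. The approach differs from the paper's: you argue directly via a compactness argument on $S^{2n-1}$, extracting a finite cover from the GRS condition and then decomposing an arbitrary $w$ radially into an integer multiple of some $n_j\hat w$ plus a remainder in a fixed compact set. The paper instead argues by contradiction and reduces to a single coordinate axis (implicitly via the submultiplicative bound $\nu(z)\le\prod_j\nu(z_je_j)$ and the evenness hypothesis), producing a sequence $s_k\to\infty$ with $\nu(s_ke_j)e^{-\delta s_k}\ge 1$, rounding to integers $s_k'=\lfloor s_k\rfloor$, and contradicting $\nu(s_k'e_j)^{1/s_k'}\to 1$. Your argument is slightly more self-contained in that it never uses the even-in-each-coordinate assumption and yields an explicit constant $M_N$; the paper's argument is shorter but relies on the coordinate decomposition and on passing to integer arguments so that GRS along the single direction $e_j$ applies directly.
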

\begin{proof}
    Supposing otherwise, there exists an index $j \in \{ 1, ..., 2n\}$ and $\delta > 0$ such that $\nu(0, ..., z_j, ..., 0) e^{- \delta |z_j|}$ is unbounded. In particular, there is a sequence of positive numbers $\{s_k\}$ such that $s_k \rightarrow \infty$ and $\nu(0, ..., s_k, ..., 0)e^{- \delta s_k} \geq 1$. If $s_k' = \lfloor s_k \rfloor$, then
    \[
        \nu(0, ..., s_k', ..., 0)e^{- \delta s_k'} \geq c > 0
    \]
    so that
    \[
        1 = \lim_{k \rightarrow \infty} \nu(0, ..., s_k', ..., 0)^\frac{1}{s_k'} \geq e^\delta > 1,
    \]
    which contradicts the assumption that $\nu$ is admissible.
\end{proof}

\begin{prop}\label{ModulationSpaceProperties}
    If $p,q \in [1,\infty)$, $g \in \mathcal{S}_{\vartheta}(\mathbb{R}^n)\setminus \{0\}$, $\chi$ is a bi-Lipschitz diffeomorphism on $\mathbb{R}^{2n}$, $\nu$ is an admissible weight, and $m$ is a $\nu$-moderate weight, then
    \begin{enumerate}
        \item the inclusion $M_{m,\chi}^{p,q}(\R^n) \hookrightarrow {\mathcal{S}_{\vartheta}}'(\mathbb{R}^n)$ is continuous;
        \item $M_{m,\chi}^{p,q}(\R^n)$ is a Banach space;
        \item the space $M_{m,\chi}^{p,q}(\R^n)$ is independent of the choice of window $g \in \mathcal{S}_{\vartheta}(\mathbb{R}^n) \setminus \{0\}$ and different choices of window define equivalent norms; and
        \item $\mathcal{S}_\vartheta(\mathbb{R}^n)$ is dense in $M_{m,\chi}^{p,q}(\R^n)$.
    \end{enumerate}
\end{prop}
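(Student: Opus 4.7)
The plan is to follow the standard template for modulation space properties, with the principal twist being that the pullback by the bi-Lipschitz diffeomorphism $\chi$ must be carried through each argument. The governing observation is that $f \mapsto V_g f \circ \chi^{-1}$ is an isometric embedding of $M^{p,q}_{m,\chi}(\R^n)$ into $L^{p,q}_m(\R^{2n})$, and the reproducing formula \eqref{ReproducingFormula}, together with Lemma \ref{LemmaOne}, Lemma \ref{LemmaTwo}, and the bi-Lipschitz bounds on $\chi$, will do most of the work.

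For (1), the inclusion $M^{p,q}_{m,\chi}(\R^n) \subset \mathcal{S}_{\vartheta}'(\R^n)$ is definitional, so the content is continuity of the norm. For $f \in M^{p,q}_{m,\chi}(\R^n)$ and $\phi \in \mathcal{S}_{\vartheta}(\R^n)$, I would express the pairing through \eqref{ReproducingFormula} as
\[
|\langle f, \phi \rangle| \lesssim \int_{\R^{2n}} |V_g f(z)| \, |V_g \phi(z)| \, dz,
\]
change variables $z = \chi^{-1}(w)$ (with bounded Jacobian since $\chi$ is bi-Lipschitz), and apply H\"older's inequality to obtain $|\langle f, \phi \rangle| \lesssim \|V_g f \circ \chi^{-1}\|_{L^{p,q}_m} \|V_g \phi \circ \chi^{-1}\|_{L^{p',q'}_{1/m}}$. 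The second factor is controlled because Lemma \ref{LemmaOne} yields $|V_g \phi(\chi^{-1}(w))| \lesssim \|\phi\|_{\mathcal{S}_{\vartheta}} e^{-c|w|}$ via the Lipschitz bound on $\chi$, while $m(w)^{-1} \lesssim \nu(w) \lesssim e^{\delta |w|}$ for arbitrarily small $\delta > 0$ by $\nu$-moderation of $m$ and Lemma \ref{LemmaTwo}; choosing $\delta < c$ closes the argument. Part (2) then follows from the isometric embedding into the complete space $L^{p,q}_m(\R^{2n})$: a Cauchy sequence $\{f_n\}$ in $M^{p,q}_{m,\chi}(\R^n)$ is also Cauchy in $\mathcal{S}_{\vartheta}'(\R^n)$ by (1), hence converges to some $f \in \mathcal{S}_{\vartheta}'(\R^n)$ with $V_g f_n \to V_g f$ pointwise, and a standard a.e.-subsequence argument identifies $V_g f \circ \chi^{-1}$ with the $L^{p,q}_m$-limit.

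For (3), I would first derive the usual pointwise estimate $|V_{g_1} f(z)| \lesssim (|V_{g_2} f| \ast |V_{g_1} g_2|)(z)$ from \eqref{ReproducingFormula} applied with windows $g_2, g_2$. Evaluating at $z = \chi^{-1}(w)$ and substituting to put the $V_{g_2} f$ factor in pulled-back form gives
\[
|V_{g_1} f(\chi^{-1}(w))| \lesssim \int_{\R^{2n}} |V_{g_2} f(\chi^{-1}(v))| \, |V_{g_1} g_2(\chi^{-1}(w) - \chi^{-1}(v))| \, |\det D\chi^{-1}(v)| \, dv.
\]
Lemma \ref{LemmaOne} gives $|V_{g_1} g_2(u)| \lesssim e^{-|u|}$, and the lower bi-Lipschitz estimate $|\chi^{-1}(w) - \chi^{-1}(v)| \gtrsim |w - v|$ upgrades this to $e^{-c|w-v|}$, converting the bound into a genuine convolution of $V_{g_2} f \circ \chi^{-1}$ against a kernel in $L^1_\nu(\R^{2n})$ (by Lemma \ref{LemmaTwo}). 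Young's convolution inequality $L^1_\nu \ast L^{p,q}_m \subset L^{p,q}_m$, which is valid precisely because $m$ is $\nu$-moderate, then yields $\|V_{g_1} f \circ \chi^{-1}\|_{L^{p,q}_m} \lesssim \|V_{g_2} f \circ \chi^{-1}\|_{L^{p,q}_m}$, and swapping the roles of $g_1$ and $g_2$ gives the reverse inequality.

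Part (4) is the main obstacle because the approximants must actually lie in $\mathcal{S}_{\vartheta}(\R^n)$, which demands STFT decay faster than $e^{-|z|}$ and so cannot be achieved with a bare truncation against the generic window $g$. I would therefore fix the Gaussian window $\varphi$ (whose STFT $V_\varphi \varphi$ has super-exponential decay) and a smooth cutoff $\eta_R \in C_c^\infty(\R^{2n})$ with $\eta_R \nearrow 1$, setting
\[
f_R := \langle \varphi, \varphi \rangle^{-1} \int_{\R^{2n}} V_\varphi f(z) \, \eta_R(\chi(z)) \, \pi(z) \varphi \, dz.
\]
To verify $f_R \in \mathcal{S}_{\vartheta}(\R^n)$, I would estimate $|V_\varphi f_R(w)| \lesssim \int_{\chi^{-1}(\operatorname{supp} \eta_R)} |V_\varphi f(z)| e^{-\pi |z - w|^2 / 2} \, dz$ and use Fubini to absorb $\vartheta(w) = e^{|w|}$ into the Gaussian factor uniformly over the compact integration region, placing $V_\varphi f_R$ in $L^1_\vartheta(\R^{2n})$. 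Convergence $f_R \to f$ in $M^{p,q}_{m,\chi}(\R^n)$ follows by reapplying the convolution argument of (3) to dominate $|V_\varphi(f - f_R) \circ \chi^{-1}|$ by $\bigl((V_\varphi f \circ \chi^{-1})(1 - \eta_R)\bigr) \ast K$ for a Gaussian kernel $K \in L^1_\nu(\R^{2n})$, then invoking Young's inequality together with dominated convergence, which requires $p, q < \infty$, to send the truncation remainder to zero.
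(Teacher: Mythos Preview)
Your proposal is correct and follows essentially the same route as the paper's proof: in each part you use the reproducing formula \eqref{ReproducingFormula}, the bi-Lipschitz change of variables, Lemmas \ref{LemmaOne} and \ref{LemmaTwo}, and the convolution relation $L^{p,q}_m * L^1_\nu \subseteq L^{p,q}_m$, exactly as the paper does. The only cosmetic differences are that for (2) the paper verifies completeness via absolute summability of series rather than Cauchy sequences, and for (4) the paper truncates with a sharp indicator $\mathbbm{1}_{K_k}$ instead of your smooth cutoff $\eta_R(\chi(\cdot))$; both approaches use the Gaussian window $\varphi$ for the approximants (the paper simply asserts $f_k \in \mathcal{S}_\vartheta(\R^n)$ where you spell out the super-exponential decay reason), and both close with the same convolution-plus-dominated-convergence argument.
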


\begin{proof}
    We first prove (1). Assume without loss of generality that $\|g\|_{L^2(\mathbb{R}^n)} = 1$, let $h \in \mathcal{S}_{\vartheta}(\mathbb{R}^n)$, and let $f \in M_{m,\chi}^{p,q}(\R^n)$. By the reproducing formula \eqref{ReproducingFormula}, the fact that $\chi$ is bi-Lipschitz, H\"older's inequality for mixed Lebesgue spaces, and the fact that $\frac{1}{m}\lesssim \nu$ (which follows since $\nu$ is even and $m$ is $\nu$-moderate), we have
    \begin{align*}
        |\langle f, h \rangle| &= \left| \int_{\R^{2n}} \langle f, \pi(z) g \rangle \langle \pi(z)g , h \rangle \, dz \right|\\
        &\lesssim \int_{\R^{2n}} \left| \langle f, \pi(\chi^{-1}(z)) g \rangle \langle \pi(\chi^{-1}(z))g , h \rangle \right| \, dz\\
        &\leq \|f\|_{M_{m,\chi}^{p,q}(\R^n)} \| \langle \pi(\chi^{-1}(\cdot)) g , h \rangle\|_{L_{\frac{1}{m}}^{p',q'}(\R^{2n})}\\
        &\lesssim \|f\|_{M_{m,\chi}^{p,q}(\R^n)} \| \langle \pi(\chi^{-1}(\cdot)) g , h \rangle\|_{L_{\nu}^{p',q'}(\R^{2n})}.
    \end{align*}
    Since $\chi$ is bi-Lipschitz, there exist $C,\delta>0$ such that $|\chi^{-1}(z)| \geq \delta |z|-C$ for all $z \in \mathbb{R}^{2n}$. Lemma \ref{LemmaOne} gives $|\langle \pi(\chi^{-1}(z)) g , h \rangle| \lesssim \|g\|_{\mathcal{S}_\vartheta(\mathbb{R}^n)} \|h\|_{\mathcal{S}_{\vartheta}(\mathbb{R}^n)} e^{-\delta |z|}$, and so by Lemma \ref{LemmaTwo}, we have 
    $$
        |\langle \pi(\chi^{-1}(z)) g , h \rangle| \nu(z) \lesssim \|g\|_{\mathcal{S}_\vartheta(\mathbb{R}^n)} \|h\|_{\mathcal{S}_{\vartheta}(\mathbb{R}^n)} e^{-\frac{\delta}{2}|z|}.
    $$
    The $L^{p',q'}(\R^{2n})$-integrability of $e^{-\frac{\delta}{2}|z|}$ implies the needed estimate.
    
    To verify (2), we show that if a sequence $\{f_k\} \subseteq M^{p,q}_{m,\chi}(\mathbb{R}^n)$ satisfies $\sum_k \|f_k\|_{M_{m,\chi}^{p,q}(\R^n)} < \infty$, then $\{\sum_{k<N}f_k\}$ converges to some $f \in M^{p,q}_{m,\chi}(\mathbb{R}^{2n})$. With this assumption, part (1) gives that $\sum_k f_k$ converges in ${\mathcal{S}_{\vartheta}}'(\mathbb{R}^n)$ to some distribution $f$, which, by the convergence in ${\mathcal{S}_{\vartheta}}'(\mathbb{R}^n)$ and the countable subadditivity of $\|\cdot\|_{L_m^{p,q}(\R^{2n})}$, belongs to $M_{m,\chi}^{p,q}(\R^n)$. Furthermore,
    \begin{align*}
        \bigg\|f - \sum_{k < N} f_k \bigg\|_{M_{m,\chi}^{p,q}(\R^{n})} &= \bigg\| \sum_{k \geq N} \langle f_k, \pi(\chi^{-1}(\cdot)) g \rangle  \bigg\|_{L_m^{p,q}(\R^{2n})}\\
        &\leq \sum_{k \geq N} \| \langle f_k, \pi(\chi^{-1}(\cdot)) g \rangle  \|_{L_m^{p,q}(\R^{2n})}\\
        &= \sum_{k \geq N} \|f_k\|_{M_{m,\chi}^{p,q}(\R^{n})} \rightarrow 0
     \end{align*}
     as $N \rightarrow \infty$. This establishes (2).
     
     To show (3), suppose $g_0 \in {\mathcal{S}_{\vartheta}(\mathbb{R}^n)} \setminus \{0\}$ and denote 
     $$
        \widetilde{M_{m,\chi}^{p,q}}(\R^n) = \left\{ f \in {\mathcal{S}_{\vartheta}}'(\mathbb{R}^n) : \|f\|_{\widetilde{M_{m,\chi}^{p,q}}(\R^n)}:=\|V_{g_0}f \circ \chi^{-1}  \|_{L_m^{p,q}(\R^{2n})} < \infty \right\}.
    $$ 
    By the reproducing formula \eqref{ReproducingFormula} and since $\chi$ is bi-Lipschitz, we have
    \begin{align*}
        \|f\|_{\widetilde{M_{m,\chi}^{p,q}}(\R^n)} &= \bigg\| \int_{\R^{2n}} \langle f, \pi(w) g \rangle \langle \pi(w) g, \pi(\chi^{-1}(\cdot)) g_0  \rangle  \,dw \bigg\|_{L_m^{p,q}(\R^{2n})}\\
        &\lesssim \bigg\| \int_{\R^{2n}} \left| \langle f, \pi(\chi^{-1}(w)) g \rangle \langle g, \pi(\chi^{-1}(\cdot) - \chi^{-1}(w)) g_0  \rangle  \right| \, dw\bigg\|_{L_m^{p,q}(\R^{2n})}.
     \end{align*}
    Since $|\chi^{-1}(z) - \chi^{-1}(w)| \geq \delta |z-w|$ for some $\delta>0$, we have by Lemma \ref{LemmaOne} that 
    $$
        |\langle g, \pi(\chi^{-1}(z) - \chi^{-1}(w)) g_0 \rangle| \lesssim e^{-\delta|z-w|}.
    $$
    Since $e^{-\delta|z|} \in L_\nu^1$ and since $L_m^{p,q}(\R^{2n}) * L_\nu^1(\R^{2n}) \subseteq L_m^{p,q}(\R^{2n})$ (see \cite{G2001}*{Proposition 11.1.3 (a)}), we have that 
    $$
        \|f\|_{\widetilde{M_{m,\chi}^{p,q}}(\R^n)} \lesssim \|f\|_{M_{m,\chi}^{p,q}(\R^n)}.
    $$
    The reverse inequality holds by a symmetric argument.

    For (4), first note that $\mathcal{S}_\vartheta(\mathbb{R}^n) \hookrightarrow M_{m,\chi}^{p,q}(\R^n)$ by Lemma \ref{LemmaOne} and that $e^{-\delta |z|} \in L_m^{p,q}(\R^{2n})$. Now let $f \in M_{m,\chi}^{p,q}(\R^n)$. Let $\{K_k\}$ be an increasing compact exhaustion of $\R^{2n}$ and set
    \[
        f_k = \int_{K_k} \langle f, \pi(w) \varphi \rangle \pi(w) \varphi \, dw.
    \]
    Clearly, each $f_k$ is in $\mathcal{S}_\vartheta(\mathbb{R}^n)$. Also,
    \begin{align*}
    \|f - f_k\|_{M_{m,\chi}^{p,q}(\R^{n})} &\lesssim \bigg\|\int_{\R^{2n} \setminus \chi(K_k)} |\langle f, \pi(\chi^{-1}(w)) \varphi \rangle \langle \pi(\chi^{-1}(w)) \varphi , \pi(\chi^{-1}(\cdot)) \varphi \rangle |\, dw \bigg\|_{L_m^{p,q}(\R^{2n})}\\
    &\lesssim \bigg\|\int_{\R^{2n} \setminus \chi(K_k)} |\langle f, \pi(\chi^{-1}(w)) g \rangle| e^{-\delta |\cdot-w|} \, dw \bigg\|_{L_m^{p,q}(\R^{2n})} \rightarrow 0
    \end{align*}
    as $k \rightarrow \infty$ by the dominated convergence theorem, since $p,q < \infty$ and $L_m^{p,q} * L_\nu^1 \subseteq L_m^{p,q}$.
\end{proof}

\begin{rem}
    Note that (1)--(3) hold in the general case $p,q \in [1,\infty]$ with the same proof; however, (4) fails when $p$ or $q$ equals $\infty$. This technicality is one of the reasons we restrict to the range $p,q \in [1,\infty)$ in our main results.
\end{rem}

We next define the notion of a $(\nu,\chi,g_1,g_2)$-localized operator.
\begin{defin}\label{LocalizationDef}
    Given $g_1,g_2 \in \mathcal{S}_\vartheta(\mathbb{R}^n) \setminus \{ 0 \}$, a bi-Lipschitz diffeomorphism $\chi : \R^{2n} \rightarrow \R^{2n}$, and an admissible weight $\nu$, we say that $T : \mathcal{S}_{\vartheta}(\mathbb{R}^n) \rightarrow \mathcal{S}_{\vartheta}'(\mathbb{R}^n)$ is $(\nu,\chi,g_1,g_2)$-localized if 
    $$
        |\langle T \pi(z) g_1 , \pi(w)g_2 \rangle| \leq L(w - \chi(z))
    $$ 
    for some $L \in L_\nu^1(\R^{2n})$. We omit $\nu$, $\chi$, $g_1$, or $g_2$ from the notation when $\nu \equiv 1$, $\chi$ is the identity, or $g_j=\varphi$.
\end{defin}

We observe that $(\nu, \chi, g_1,g_2)$-localized operators are bounded from $M^{p,q}_{m,\chi}(\mathbb{R}^n)$ to $M^{p,q}_m(\mathbb{R}^n)$. 
\begin{prop}\label{localized>bounded}
    Let $p,q \in [1,\infty)$, $g_1,g_2 \in \mathcal{S}_{\vartheta}(\mathbb{R}^n) \setminus \{ 0 \}$, $\chi:\R^{2n}\rightarrow\R^{2n}$ be a bi-Lipschitz diffeomorphism, $\nu$ be an admissible weight, and $m$ be a $\nu$-moderate weight. If $T : \mathcal{S}_{\vartheta}(\mathbb{R}^n) \rightarrow \mathcal{S}_{\vartheta}'(\mathbb{R}^n)$ is $(\nu,\chi,g_1,g_2)$-localized, then $T$ is bounded from $M^{p,q}_{m,\chi}(\mathbb{R}^n)$ to $M_m^{p,q}(\R^n)$. 
\end{prop}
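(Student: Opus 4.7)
The plan is to combine the reproducing formula with the localization bound, perform a change of variables to straighten out $\chi$, and finish with weighted Young's convolution on mixed Lebesgue spaces, which has already been invoked in the proof of Proposition \ref{ModulationSpaceProperties}.

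First, I will fix $f \in M^{p,q}_{m,\chi}(\R^n)$ and compute the short-time Fourier transform of $Tf$ with respect to the window $g_2$. Since $\langle g_1, g_1 \rangle = \|g_1\|_{L^2}^2 \neq 0$, the reproducing formula \eqref{ReproducingFormula} (with windows $g_1, g_1$) writes
\[
    f = \|g_1\|_{L^2}^{-2} \int_{\R^{2n}} V_{g_1} f(z)\, \pi(z) g_1 \, dz,
\]
where the integral converges absolutely in $\mathcal{S}_\vartheta(\R^n)$ by Lemma \ref{LemmaOne} (so it is valid to interchange the action of $T$ and of $\pi(w)g_2$ with the integral). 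Applying $T$ and pairing with $\pi(w) g_2$ therefore gives
\[
    V_{g_2}(Tf)(w) = \|g_1\|_{L^2}^{-2} \int_{\R^{2n}} V_{g_1}f(z) \, \langle T\pi(z) g_1, \pi(w) g_2 \rangle \, dz.
\]

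Next I will invoke the $(\nu,\chi,g_1,g_2)$-localization hypothesis of Definition \ref{LocalizationDef} to dominate the kernel by $L(w - \chi(z))$ with $L \in L^1_\nu(\R^{2n})$. Changing variables $u = \chi(z)$ and using that $\chi$ is bi-Lipschitz (so that its inverse Jacobian is essentially bounded) yields
\[
    |V_{g_2}(Tf)(w)| \lesssim \int_{\R^{2n}} |V_{g_1} f(\chi^{-1}(u))| \, L(w-u)\, du = \bigl((|V_{g_1} f| \circ \chi^{-1}) * L\bigr)(w).
\]

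Finally, I will take $L^{p,q}_m$ norms of both sides. The convolution estimate $L^{p,q}_m(\R^{2n}) * L^1_\nu(\R^{2n}) \hookrightarrow L^{p,q}_m(\R^{2n})$ from \cite{G2001}*{Proposition 11.1.3 (a)} gives
\[
    \|Tf\|_{M^{p,q}_m(\R^n)} = \|V_{g_2}(Tf)\|_{L^{p,q}_m(\R^{2n})} \lesssim \|(V_{g_1}f) \circ \chi^{-1}\|_{L^{p,q}_m(\R^{2n})}\, \|L\|_{L^1_\nu(\R^{2n})} = \|L\|_{L^1_\nu}\, \|f\|_{M^{p,q}_{m,\chi}(\R^n)},
\]
which is the desired boundedness, with implicit constants independent of $f$.

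There is no real obstacle here: once the reproducing formula is used to express the matrix entries $V_{g_2}(Tf)(w)$ as an integral against $V_{g_1}f(z)$, the localization hypothesis and the bi-Lipschitz property of $\chi$ combine so that the operator $f \mapsto Tf$ factors (in terms of STFT magnitudes) through a weighted convolution, and the mixed weighted Young inequality does the rest. The only mild subtlety is justifying the interchange of $T$, the distributional pairing with $\pi(w)g_2$, and the integral; this follows from the absolute convergence of the reproducing integral in $\mathcal{S}_\vartheta$ together with the continuity of $T$ from $\mathcal{S}_\vartheta(\R^n)$ into $\mathcal{S}_\vartheta'(\R^n)$.
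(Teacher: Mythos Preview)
Your computation is essentially the paper's: reproducing formula, localization bound, bi-Lipschitz change of variables, then $L^{p,q}_m * L^1_\nu \subseteq L^{p,q}_m$. However, there is a gap at the very first step. You fix $f \in M^{p,q}_{m,\chi}(\R^n)$, but $T$ is only given as a map $\mathcal{S}_\vartheta(\R^n) \to \mathcal{S}_\vartheta'(\R^n)$, so $Tf$ has no a priori meaning for such $f$. Relatedly, your justification that the reproducing integral ``converges absolutely in $\mathcal{S}_\vartheta(\R^n)$ by Lemma \ref{LemmaOne}'' fails here: Lemma \ref{LemmaOne} requires $f \in \mathcal{S}_\vartheta(\R^n)$, and for a generic $f \in M^{p,q}_{m,\chi}(\R^n)$ the coefficients $V_{g_1}f(z)$ need not decay fast enough to make $\int V_{g_1}f(z)\,\pi(z)g_1\,dz$ converge in $\mathcal{S}_\vartheta$-norm (nor to allow interchanging $T$ with the integral via continuity $\mathcal{S}_\vartheta \to \mathcal{S}_\vartheta'$).

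The fix is exactly what the paper does: first take $f \in \mathcal{S}_\vartheta(\R^n)$, run your argument verbatim to obtain $\|Tf\|_{M^{p,q}_m} \lesssim \|f\|_{M^{p,q}_{m,\chi}}$, and then invoke the density of $\mathcal{S}_\vartheta(\R^n)$ in $M^{p,q}_{m,\chi}(\R^n)$ from Proposition \ref{ModulationSpaceProperties}(4) to extend $T$ by continuity. With that one-line correction your proof matches the paper's.
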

\begin{proof}
    Suppose $\|g_1\|_{L^2(\mathbb{R}^n)} = \|L\|_{L_\nu^1(\mathbb{R}^{2n})} = 1$ and let $f \in \mathcal{S}_\vartheta(\mathbb{R}^n)$. Use $L_m^{p,q} * L_\nu^1 \subseteq L_m^{p,q}$ to see  
    \begin{align*}
        \|Tf\|_{M^{p,q}_{m}(\mathbb{R}^n)} &= \| \langle Tf, \pi(\cdot) g_2 \rangle\|_{L_m^{p,q}(\R^{2n})}\\
        &= \bigg\| \int_{\R^{2n}} \langle f, \pi(w) g_1 \rangle \langle T \pi(w) g_1 , \pi(\cdot) g_2 \rangle \, dw \bigg\|_{L_m^{p,q}(\R^{2n})}\\
        &\lesssim \bigg\| \int_{\R^{2n}} \left| \langle f, \pi(\chi^{-1}(w)) g_1 \rangle \langle T \pi(\chi^{-1}(w)) g_1 , \pi(\cdot) g_2 \rangle \right| \, dw \bigg\|_{L_m^{p,q}(\R^{2n})}\\
        &\leq \bigg\|\int_{\R^{2n}} \left| \langle f, \pi(\chi^{-1}(w)) g_1 \rangle L(\cdot-w) \right| \, dw \bigg\|_{L_m^{p,q}(\R^{2n})}\\
        &\lesssim \|f\|_{M_{m,\chi}^{p,q}(\R^n)}.
    \end{align*}
    The result follows by the density of $\mathcal{S}_{\vartheta}(\mathbb{R}^n)$ in $M^{p,q}_{m,\chi}(\mathbb{R}^n)$ from Proposition \ref{ModulationSpaceProperties} part (4).
\end{proof}

We next define the notion of a $(\chi,g_1,g_2)$-weakly compact operator. 
\begin{defin}
    Given $g_1,g_2 \in \mathcal{S}_{\vartheta}(\mathbb{R}^n)\setminus \{0\}$ and a bi-Lipschitz diffeomorphism $\chi: \R^{2n}\rightarrow\R^{2n}$, we say that $T : \mathcal{S}_{\vartheta}(\mathbb{R}^n) \rightarrow \mathcal{S}_{\vartheta}'(\mathbb{R}^n)$ is $(\chi,g_1,g_2)$-weakly compact if for every compact set $K\subseteq\mathbb{R}^{2n}$, we have 
    $$
        \lim_{z\rightarrow \infty}\sup_{w \in z+K} |\langle T \pi(\chi^{-1}(w)) g_1 , \pi(z) g_2 \rangle|=0.
    $$
    We omit $\chi$, $g_1$, or $g_2$ from the notation when $\chi$ is the identity or $g_j = \varphi$. 
\end{defin}

\begin{prop}\label{WeakCpt>Cpt}
    Let $p,q \in [1,\infty)$, $g_1,g_2 \in \mathcal{S}_{\vartheta}(\mathbb{R}^n) \setminus \{ 0 \}$, $\chi:\R^{2n}\rightarrow\R^{2n}$ be a bi-Lipschitz diffeomorphism, $\nu$ be an admissible weight, and $m$ be a $\nu$-moderate weight. If $T : \mathcal{S}_{\vartheta}(\mathbb{R}^n) \rightarrow \mathcal{S}_{\vartheta}'(\mathbb{R}^n)$ is $(\nu,\chi,g_1,g_2)$-localized and $(\chi,g_1,g_2)$-weakly compact, then $T$ is compact from $M^{p,q}_{m,\chi}(\mathbb{R}^n)$ to $M_m^{p,q}(\R^n)$.
\end{prop}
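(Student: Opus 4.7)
The plan is to represent $T$ through its Gabor matrix and approximate it in operator norm by compact operators obtained from two truncations of the kernel. Using the reproducing formula for $f \in \mathcal{S}_\vartheta(\mathbb{R}^n)$, I would write
\[
V_{g_2}(Tf)(z) = \langle g_1,g_1\rangle^{-1}\int_{\mathbb{R}^{2n}} \tilde K(z,w)\, V_{g_1}f(w)\, dw, \qquad \tilde K(z,w) := \langle T\pi(w)g_1, \pi(z)g_2 \rangle,
\]
where localization yields $|\tilde K(z,w)| \leq L(z-\chi(w))$ for some $L \in L^1_\nu(\mathbb{R}^{2n})$. The change of variables $u = \chi(w)$ combined with the convolution inequality $L^{p,q}_m \ast L^1_\nu \subseteq L^{p,q}_m$ is exactly the proof of Proposition \ref{localized>bounded}; to upgrade to compactness, I would show that $T$ is a norm limit of compact operators $T_R^N : M^{p,q}_{m,\chi}\to M^{p,q}_m$ defined by replacing $\tilde K$ by a continuous, compactly supported truncation (the corresponding STFT-side integral is made into an operator on modulation spaces via Gabor synthesis).

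Concretely, fix smooth cutoffs $\eta_R$ and $\rho_N$ with $\eta_R \equiv 1$ on $B(0,R)$, $\mathrm{supp}\,\eta_R \subseteq B(0,2R)$, and analogously for $\rho_N$. Let $T_R$ and $T_R^N$ be given by the kernels $\tilde K(z,w)\eta_R(z-\chi(w))$ and $\rho_N(z)\tilde K(z,w)\eta_R(z-\chi(w))$. The tail $T - T_R$ has kernel pointwise dominated by $L(z-\chi(w))\mathbf{1}_{|z-\chi(w)|>R}$, so the convolution argument of Proposition \ref{localized>bounded} yields
\[
\|T-T_R\|_{M^{p,q}_{m,\chi}\to M^{p,q}_m} \lesssim \|L\mathbf{1}_{|\cdot|>R}\|_{L^1_\nu} \longrightarrow 0 \quad\text{as } R\to\infty
\]
by dominated convergence. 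For fixed $R$, the weak compactness hypothesis applied to $\mathcal{K} = \overline{B(0,2R)}$ gives
\[
\varepsilon_N(R) := \sup_{|z|\geq N}\,\sup_{z-\chi(w)\in\mathcal{K}} |\tilde K(z,w)| \longrightarrow 0 \quad\text{as } N\to\infty,
\]
so the kernel of $T_R - T_R^N$ is bounded pointwise by $\varepsilon_N(R)\mathbf{1}_{|z-\chi(w)|\leq 2R}$, and a further convolution estimate gives $\|T_R - T_R^N\| \lesssim \varepsilon_N(R)\|\mathbf{1}_{B(0,2R)}\|_{L^1_\nu} \to 0$.

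Finally, I would show each $T_R^N$ is compact. The truncated kernel is jointly continuous in $(z,w)$ (since $\tilde K$ is continuous by strong continuity of the time-frequency shifts together with the Banach-space continuity $T:\mathcal{S}_\vartheta \to \mathcal{S}'_\vartheta$ that follows from Proposition \ref{localized>bounded} via standard modulation-space embeddings) and compactly supported in $\overline{B(0,2N)} \times \chi^{-1}(\overline{B(0,2N+2R)})$, a bounded set since $\chi^{-1}$ is Lipschitz; hence the kernel is bounded. H\"older's inequality for mixed Lebesgue spaces then produces uniform pointwise bounds on $V_{g_2}(T_R^N f)$ for $f$ in the unit ball of $M^{p,q}_{m,\chi}$, and uniform continuity of the kernel on its compact support yields equicontinuity on $\overline{B(0,2N)}$. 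Arzel\`a--Ascoli gives relative compactness in $C(\overline{B(0,2N)})$, which passes to $L^{p,q}_m$ by compact support and hence to $M^{p,q}_m$ via $V_{g_2}$. Combining the three steps exhibits $T$ as a norm limit of compact operators, so $T$ is compact. The main obstacle is the second norm estimate: the weak compactness hypothesis provides only pointwise decay in $z$ of the matrix coefficient, and to convert this into a uniform operator-norm bound one must introduce the two cutoffs in the correct order---first $\eta_R$ in the diagonal coordinate $z-\chi(w)$ to restrict support to a strip of width $2R$, then $\rho_N$ in $z$ to invoke weak compactness uniformly on that strip, which allows a final convolution estimate to close the argument.
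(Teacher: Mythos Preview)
Your approach is correct and genuinely different from the paper's. The paper does not approximate $T$ by compact operators; instead it invokes the D\"orfler--Feichtinger--Gr\"ochenig compactness criterion \cite{DFG2002}*{Theorem 5}, which reduces compactness to showing that $\{V_{g_2}(Tf): \|f\|_{M^{p,q}_{m,\chi}}\le 1\}$ is uniformly tight in $L^{p,q}_m$. The paper then splits the integral representation of $V_{g_2}(Tf)$ into a near-diagonal and a far-diagonal piece and controls them exactly as you do---localization for the far-diagonal tail (your $T-T_R$ estimate) and weak compactness for the near-diagonal piece at infinity (your $T_R-T_R^N$ estimate)---but stops there, since tightness is all the criterion demands. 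Your route replaces the appeal to \cite{DFG2002} with an explicit Arzel\`a--Ascoli argument for the doubly truncated operator, which makes the proof self-contained at the cost of extra work verifying compactness of $T_R^N$. One expositional point: rather than defining $T_R^N$ as a map into $M^{p,q}_m$ via synthesis (which forces you to track whether the truncated integral lies in the range of $V_{g_2}$), it is cleaner to argue entirely at the STFT level---show the integral operator $f\mapsto \int \tilde K(\cdot,w)V_{g_1}f(w)\,dw$ is compact as a map $M^{p,q}_{m,\chi}\to L^{p,q}_m$, and then use that $V_{g_2}$ is an isometry onto a closed subspace to pull compactness back to $T$.
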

\begin{proof}
    Assume that $\|g_1\|_{L^2(\mathbb{R}^n)} = 1$. Appealing to \cite{DFG2002}*{Theorem 5}, it suffices to verify that for every $\epsilon>0$ there exists a compact set $K\subseteq\mathbb{R}^{2n}$ such that 
    $$
        \sup_{\substack{f \in M^{p,q}_{m,\chi}(\mathbb{R}^n) \\ \|f\|_{M_{m,\chi}^{p,q}(\R^n) \leq 1}}} \|\mathbbm{1}_{K^c}V_{g_2}(Tf)\|_{L^{p,q}_m(\mathbb{R}^{2n})} <\epsilon.
    $$
    To this end, let $\epsilon>0$ and choose a symmetric compact set $K'\subseteq\mathbb{R}^{2n}$ 
    such that 
    \[
        \sup_{\substack{h \in L^{p,q}_m(\mathbb{R}^{2n})\\\|h\|_{L_m^{p,q}(\R^{2n})}} \leq 1} \|h * (\mathbbm{1}_{(K')^c}L)\|_{L^{p,q}_m(\mathbb{R}^{2n})} < \epsilon,
    \]
    where $L \in L^1_{\nu}(\mathbb{R}^{2n})$ is from Definition \ref{LocalizationDef}, which exists by dominated convergence since  
    $$
        \|h * (\mathbbm{1}_{(K')^c}L)\|_{L^{p,q}_m(\mathbb{R}^{2n})} \lesssim \|h\|_{L^{p,q}_m(\mathbb{R}^{2n})} \|\mathbbm{1}_{(K')^c}L\|_{L^1_{\nu}(\mathbb{R}^{2n})}.
    $$
    By weak compactness, we can find a compact set $K \subseteq \mathbb{R}^{2n}$ such that
    $$
        \sup_{z \in \mathbb{R}^{2n}\setminus K}\sup_{w \in z+K'} |\langle T \pi(\chi^{-1}(w)) g_1 , \pi(z) g_2 \rangle | < \frac{\epsilon}{\nu(K')}.
    $$

    Note that by the reproducing formula \eqref{ReproducingFormula}, we have  
    \begin{align*}
        \mathbbm{1}_{K^c}(z) |V_{g_2}(Tf)(z)| &= \mathbbm{1}_{K^c}(z)  \left| \int_{\R^{2n}} \langle f, \pi(w) g_1 \rangle \langle T \pi(w) g_1 , \pi(z) g_2 \rangle \, dw \right|\\
        &\lesssim \mathbbm{1}_{K^c}(z)  \int_{\R^{2n}} | V_{g_1}f(\chi^{-1}(w)) \langle T \pi(\chi^{-1}(w)) g_1, \pi(z) g_2 \rangle | \, dw
    \end{align*}
    for any $z \in \mathbb{R}^{2n}$, and so we can estimate 
    \begin{align*}
        \|\mathbbm{1}_{K^c}V_{g_2}(Tf)\|_{L^{p,q}_m(\mathbb{R}^{2n})} &\lesssim \bigg\|\mathbbm{1}_{K^c} \int_{\mathbb{R}^{2n}} |V_{g_1}f(\chi^{-1}(w))||\langle T\pi(\chi^{-1}(w))g_1,\pi(\cdot)g_2\rangle|\,dw\bigg\|_{L^{p,q}_m(\mathbb{R}^{2n})}\\
        &\leq \bigg\|\mathbbm{1}_{K^c} \int_{(\cdot + K')^c} |V_{g_1}f(\chi^{-1}(w))||\langle T\pi(\chi^{-1}(w))g_1,\pi(\cdot)g_2\rangle|\,dw\bigg\|_{L^{p,q}_m(\mathbb{R}^{2n})}\\
        &\quad\quad+\bigg\|\mathbbm{1}_{K^c} \int_{(\cdot +K')} |V_{g_1}f(\chi^{-1}(w))||\langle T\pi(\chi^{-1}(w))g_1,\pi(\cdot)g_2\rangle|\,dw\bigg\|_{L^{p,q}_m(\mathbb{R}^{2n})}.
    \end{align*}
    We bound the first term by localization and the choice of $K'$: 
    \begin{align*}
        \bigg\|\mathbbm{1}_{K^c} \int_{(\cdot + K')^c} &|V_{g_1}f(\chi^{-1}(w))||\langle T\pi(\chi^{-1}(w))g_1,\pi(\cdot)g_2\rangle|\,dw\bigg\|_{L^{p,q}_m(\mathbb{R}^{2n})} \\
        &\quad \leq \bigg\|\int_{(\cdot + K')^c} |V_{g_1}f(\chi^{-1}(w))|L(\cdot -w)\,dw\bigg\|_{L^{p,q}_m(\mathbb{R}^{2n})}\\
        &\quad = \| |V_{g_1}f\circ\chi^{-1}|*(\mathbbm{1}_{(K')^c}L)\|_{L^{p,q}_m(\mathbb{R}^{2n})}\\
        &\quad < \epsilon.
    \end{align*}
    The second term is controlled by the choice of $K$ as follows:
    \begin{align*}
        \bigg\|\mathbbm{1}_{K^c} \int_{(\cdot +K')} &|V_{g_1}f(\chi^{-1}(w))||\langle T\pi(\chi^{-1}(w))g_1,\pi(\cdot)g_2\rangle|\,dw\bigg\|_{L^{p,q}_m(\mathbb{R}^{2n})}\\
        &\quad \leq \frac{\epsilon}{\nu(K')}\bigg\| \int_{(\cdot + K')} |V_{g_1}f(\chi^{-1}(w))|\,dw\bigg\|_{L^{p,q}_m(\mathbb{R}^{2n})}\\
        &\quad = \frac{\epsilon}{\nu(K')} \||V_{g_1}f\circ \chi^{-1}|*\mathbbm{1}_{K'}\|_{L^{p,q}_m(\mathbb{R}^{2n})}\\
        &\quad \lesssim \frac{\epsilon}{\nu(K')}\|V_{g_1}f\circ \chi^{-1}\|_{L^{p,q}_m(\mathbb{R}^{2n})}\|\mathbbm{1}_{K'}\|_{L^1_{\nu}(\mathbb{R}^{2n})}\\
        &\quad < \epsilon.
    \end{align*}
    The result follows since these estimates hold uniformly over all $\|f\|_{M^{p,q}_{m,\chi}(\mathbb{R}^n)}\leq 1$.
\end{proof}

\begin{proof}[Proof of Theorem \ref{AbstractCompactnessTheorem}]
This follows from Proposition \ref{localized>bounded} and Proposition \ref{WeakCpt>Cpt}.
\end{proof}

\section{Applications}\label{ApplicationsSection}

The sufficient conditions for our compactness results are phrased in terms of membership of the symbol $\sigma$ in $M^0(\mathbb{R}^n)$, which is defined as follows.
\begin{defin}\label{M0Definition}
    The space $M^0(\R^n)$ is defined to be space of all $f \in \mathcal{S}'(\mathbb{R}^n)$ such that 
    $$
        \lim_{z\rightarrow \infty}V_{\varphi}f(z) = 0.
    $$
\end{defin}
\noindent Note that $M^0(\mathbb{R}^n)$ is the $M^\infty(\R^n)$ closure of the Schwartz class $\mathcal{S}(\R^n)$; see \cite{BGHO2005}*{Lemma 2.2}.


\subsection{Fourier integral operators}\label{FIOSubsection}

In this subsection, we study the compactness of Fourier integral operators. We say that $\Phi$ is a tame phase if it is smooth with $\partial^\alpha \Phi \in L^{\infty}(\mathbb{R}^{2n})$ for all $|\alpha| \geq 2$ and if $|\det D^2 \Phi| \geq \delta > 0$. 
The canonical transformation of $\Phi$ is defined to be the function $\chi : \R^{2n} \rightarrow \R^{2n}$ defined by $(x,\xi) = \chi(y,\eta)$ and the system
\[
    \begin{cases}
        y = \nabla_\eta \Phi(x,\eta) \\
        \xi = \nabla_x \Phi(x,\eta)
    \end{cases}.
\]
Note that $\chi$ is a bi-Lipschitz diffeomorphism of $\R^{2n}$; see \cite{CGN2012}.

We establish the following uniform vanishing estimate for the matrix coefficients of a Fourier integral operator with tame phase and $M^0(\mathbb{R}^{2n})$ symbol.
\begin{prop}\label{FIOWBPestimate}
    If $\sigma \in M^0(\R^{2n})$ and $\Phi$ is a tame phase, then 
    \[
        \lim_{z \rightarrow \infty} \sup_{w \in \R^{2n}} |\langle T_{\sigma, \Phi} \pi(z) \varphi, \pi(w) \varphi \rangle| = 0.
    \]
\end{prop}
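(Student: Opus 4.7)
My plan is to unfold the matrix coefficient as a pairing of $\sigma$ against a Gaussian-times-phase kernel, then dualize to the STFT side and exploit the fact that $\sigma \in M^{0}(\R^{2n})$ means $V_\varphi\sigma \in C_0$. Writing out the definition of $T_{\sigma,\Phi}$ using $\widehat{\pi(z)\varphi}(\xi) = e^{2\pi i z_1 z_2}e^{-2\pi i z_1\xi}\varphi(\xi-z_2)$ and changing variables $y = x - w_1$, $\eta = \xi - z_2$ to recenter the two Gaussian windows at the origin, I obtain
\begin{equation*}
    |\langle T_{\sigma,\Phi}\pi(z)\varphi,\pi(w)\varphi\rangle| = \bigg|\int_{\R^{2n}}\sigma((w_1,z_2)+u)\,G_{z,w}(u)\,du\bigg|,
\end{equation*}
where $G_{z,w}(y,\eta) := e^{2\pi i[\Phi(y+w_1,\eta+z_2) - z_1\eta - w_2 y]}(\varphi\otimes\varphi)(y,\eta)$.

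The core technical step is to Taylor-expand $\Phi$ around the peak point $(w_1,z_2)$. Writing $\Phi(y+w_1,\eta+z_2) = \Phi(w_1,z_2) + \partial_x\Phi(w_1,z_2)\,y + \partial_\xi\Phi(w_1,z_2)\,\eta + R_{z,w}(y,\eta)$ exhibits $G_{z,w}$ as a unimodular constant times a phase-space modulation of $e^{2\pi i R_{z,w}}(\varphi\otimes\varphi)$, with modulation frequency $(\alpha,\beta) := (\partial_x\Phi(w_1,z_2)-w_2,\partial_\xi\Phi(w_1,z_2)-z_1)$. Because $\Phi$ is tame, $\partial^{\gamma}R_{z,w}$ is uniformly bounded for $|\gamma|\geq 2$ and satisfies $|\nabla R_{z,w}(y,\eta)| \lesssim |y|+|\eta|$, both uniformly in $(z,w)$; the Leibniz rule then shows that $\{e^{\pm 2\pi i R_{z,w}}(\varphi\otimes\varphi)\}_{z,w}$ is bounded in $\mathcal{S}(\R^{2n})$. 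Consequently there exists $F \in \mathcal{S}(\R^{4n})$ with $|V_{\varphi\otimes\varphi}(e^{\pm 2\pi i R_{z,w}}(\varphi\otimes\varphi))(\zeta)| \leq F(\zeta)$ for every $(z,w)$. Feeding this estimate and the translation/modulation covariance of the STFT into Parseval's formula
\begin{equation*}
    \int\sigma((w_1,z_2)+u)\,G_{z,w}(u)\,du = \int V_{\varphi\otimes\varphi}(T_{-(w_1,z_2)}\sigma)(\zeta)\,\overline{V_{\varphi\otimes\varphi}\overline{G_{z,w}}(\zeta)}\,d\zeta
\end{equation*}
and changing variables yields
\begin{equation*}
    |\langle T_{\sigma,\Phi}\pi(z)\varphi,\pi(w)\varphi\rangle| \leq (|V_{\varphi\otimes\varphi}\sigma| * \tilde F)(\Xi(z,w)),
\end{equation*}
where $\tilde F(\zeta) := F(-\zeta)$ and $\Xi(z,w) := (w_1,\,z_2,\,w_2-\partial_x\Phi(w_1,z_2),\,z_1-\partial_\xi\Phi(w_1,z_2)) \in \R^{4n}$.

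To close out the proof, I use that $\sigma \in M^{0}(\R^{2n})$ forces $|V_{\varphi\otimes\varphi}\sigma| \in C_0(\R^{4n})$ while $\tilde F \in L^1(\R^{4n})$, so $|V_{\varphi\otimes\varphi}\sigma|*\tilde F \in C_0(\R^{4n})$ by the standard inclusion $C_0*L^1\subseteq C_0$. It remains only to verify the escape property $\inf_{w \in \R^{2n}}|\Xi(z,w)| \to \infty$ as $z \to \infty$. The tame-phase hypothesis supplies $|\nabla\Phi(w_1,z_2)| \lesssim 1 + |w_1| + |z_2|$, and a short case split on whether $|z_2|$ is large versus $|z_1|$ is large (with a sub-split by whether $|w_1|$ is small or comparable to $|z_1|$) shows that the second or fourth block of $\Xi$ grows without bound uniformly in $w$. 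The main obstacle in executing the plan is establishing the uniform $\mathcal{S}(\R^{2n})$-boundedness of $\{e^{\pm 2\pi i R_{z,w}}(\varphi\otimes\varphi)\}_{z,w}$, which requires tracking that the at-most-linear growth of $\nabla R_{z,w}$ is absorbed at every order of differentiation by the Gaussian decay of $\varphi\otimes\varphi$; once this is in place, everything else is bookkeeping with STFT covariance and the convolution inclusion.
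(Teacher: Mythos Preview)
Your proof is correct and follows a genuinely different route from the paper. Both arguments begin identically: write the matrix coefficient as a pairing of a translate of $\sigma$ against $e^{2\pi i\Phi(\cdot+w_1,\cdot+z_2)}(\varphi\otimes\varphi)$, then Taylor-expand $\Phi$ around $(w_1,z_2)$ and use tameness to show that the remainder factor $e^{2\pi i R_{z,w}}(\varphi\otimes\varphi)$ lies in a bounded subset of $\mathcal{S}(\R^{2n})$ (equivalently, has uniformly bounded $M^1$ norm). The divergence is in the endgame. The paper extracts from this the uniform operator-type bound $\sup_{z,w}|\langle T_{\sigma,\Phi}\pi(z)\varphi,\pi(w)\varphi\rangle|\lesssim\|\sigma\|_{M^\infty}$, then argues by density: for $\sigma\in\mathcal{S}(\R^{2n})$ the operator is Hilbert--Schmidt, hence compact, so $\|T_{\sigma,\Phi}\pi(z)\varphi\|_{L^2}\to 0$ and Cauchy--Schwarz finishes; the general $\sigma\in M^0$ follows since $\mathcal{S}$ is $M^\infty$-dense in $M^0$. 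You instead push the STFT through via Parseval to obtain the explicit convolution bound $|\langle T_{\sigma,\Phi}\pi(z)\varphi,\pi(w)\varphi\rangle|\le (|V\sigma|*\tilde F)(\Xi(z,w))$, invoke $C_0*L^1\subseteq C_0$, and verify the escape $\inf_w|\Xi(z,w)|\to\infty$. The paper's route is a bit shorter and more modular; yours avoids any appeal to compactness on $L^2$ and produces a pointwise majorant for the matrix coefficients that could be reused for quantitative estimates. One cosmetic remark: you only need $F\in L^1(\R^{4n})$, which follows immediately from uniform Schwartz boundedness; asserting $F\in\mathcal{S}(\R^{4n})$ is unnecessary.
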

\begin{proof}
We first claim that if $\sigma \in M^{\infty}(\R^{2n})$, then
    \begin{align}\label{FIOClaim}
        \sup_{z,w \in \R^{2n}} |\langle T_{\sigma, \Phi} \pi(z) \varphi , \pi(w) \varphi \rangle| \lesssim \|\sigma\|_{M^\infty(\R^{2n})}.
    \end{align}
    To prove this, we first estimate 
\begin{align*}
    |\langle T_{\sigma,\Phi} \pi(z) \varphi, &\pi(w) \varphi \rangle| = \left| \iint_{\mathbb{R}^{2n}} \sigma(x,\xi) e^{2\pi i \Phi(x,\xi)} e^{-2 \pi i \xi \cdot z_1} \widehat{\varphi}(\xi - z_2) e^{-2 \pi i x \cdot w_2} \overline{\varphi}(x-w_1)  \,d\xi dx \right|\\
    &= \left| \iint_{\mathbb{R}^{2n}} \sigma(x+w_1,\xi+z_2) e^{2\pi i \Phi(x+w_1,\xi+z_2)} e^{-2 \pi i \xi \cdot z_1} \varphi(\xi) e^{-2 \pi i x \cdot w_2} \varphi(x)  \,d\xi dx \right|\\
    &\leq \|\sigma\|_{M^{\infty}(\R^{2n})} \| e^{2 \pi i \Phi(x+w_1, \xi + z_2)} \varphi(\xi) \varphi(x) \|_{M^{1}(\R^{2n})}.
\end{align*}
where we used the translation invariance of $M^\infty(\R^{2n})$ and modulation invariance of $M^1(\R^{2n})$. 

We verify \eqref{FIOClaim} by showing that the second factor above is bounded uniformly in $z$ and $w$. Note that it suffices to obtain this bound with $\Phi(x,\xi)$ in place of $\Phi(x+w_1, \xi+z_2)$, as long as the constants depend only on $\|\partial^\alpha \Phi\|_{L^{\infty}(\mathbb{R}^{2n})}$ for $|\alpha| \geq 2$. To this end, let $\theta \in C_0^\infty(\R^{2n})$ satisfy $\sum_{k \in \Z^{2n}} \theta(\cdot-k)\equiv 1$ and write $G(x,\xi) := \varphi(\xi) \varphi(x)$. Then
\[
    \| e^{2 \pi i \Phi(x, \xi)} G(x,\xi) \|_{M^{1}(\R^{2n})} \leq \sum_{k \in \Z^{2n}} \|\theta((x,\xi) - k)e^{2 \pi i \Phi(x,\xi)}  G(x,\xi)\|_{M^{1}(\R^{2n})}.
\]
For $(x,\xi)$ in the support of $\theta(\cdot -k)$, the smoothness of $\Phi$ and Taylor's theorem allow us to write $\Phi(x,\xi) = c_k + c_k' \cdot(x,\xi) + \Psi_k(x,\xi)$, where $c_k,c_k' \in \mathbb{R},\R^{2n}$ and $\Psi_k$ is smooth with derivatives that are bounded uniformly over $k \in \mathbb{Z}^{2n}$. Modulation invariance of $M^{1}(\mathbb{R}^{2n})$ then gives
\[
    \|\theta((x,\xi) - k)e^{2 \pi i \Phi(x,\xi)} G(x,\xi)\|_{M^{1}(\R^{2n})} = \| \theta((x,\xi)-k) e^{2 \pi i \Psi_k(x,\xi)} G(x,\xi) \|_{M^{1}(\R^{2n})}.
\]
Since $e^{2 \pi i \Psi_k(x,\xi)} G(x,\xi)$ is a Schwartz function with seminorms bounded uniformly in $k$, $\theta((x,\xi)-k) e^{2 \pi i \Psi_k(x,\xi)} G(x,\xi)$ is a Schwartz function with seminorms controlled by a constant times $(1+|k|)^{-2n-1}$. Since $\mathcal{S}(\R^{2n})$ continuously embeds into $M^1(\mathbb{R}^{2n})$, we have
\[
    \sum_{k \in \Z^{2n}} \|\theta((x,\xi) - k)e^{2 \pi i \Phi(x,\xi)} G(x,\xi)\|_{M^{1}(\R^{2n})} \lesssim \sum_{k \in \mathbb{Z}^{2n}} \frac{1}{(1+|k|)^{2n+1}}\lesssim 1,
\]
which establishes \eqref{FIOClaim}.

Note that if $\sigma \in \mathcal{S}(\R^{2n})$, then $T_{\sigma,\Phi}$ is $L^2(\mathbb{R}^n)$-compact (in fact, it is Hilbert-Schmidt as it is the composition with the Fourier transform of an integral operator having a square-integrable kernel). This implies that $\|T_{\sigma,\Phi} \pi(z) \varphi \|_{L^2(\mathbb{R}^n)} \rightarrow 0$ as $z \rightarrow \infty$ because $\pi(z) \varphi \rightarrow 0$ weakly in $L^2(\mathbb{R}^n)$, and so the result holds by Cauchy-Schwarz.  The result for general $\sigma \in M^0(\mathbb{R}^{2n})$ follows by density of $\mathcal{S}(\mathbb{R}^{2n})$ in $M^0(\mathbb{R}^{2n})$ with respect to the $M^{\infty}(\mathbb{R}^{2n})$ norm and \eqref{FIOClaim}.
\end{proof}

\begin{proof}[Proof of Theorem \ref{FIOCompactness}]
    The result follows from Theorem \ref{AbstractCompactnessTheorem} since $T_{\sigma,\Phi}$ is $(\nu,\chi)$-localized by \cite{CGN2012}*{Theorem 3.3} and $\chi$-weakly compact by Proposition \ref{FIOWBPestimate}.
\end{proof}


\subsection{\boldmath${\tau}$\unboldmath-pseudodifferential operators}\label{OptauSubsection}

We use the following transformation to relate $\tau$-pseudodifferential operators for different choices of $\tau$.
\begin{defin}\label{TauTransform}
    For $\tau_1, \tau_2 \in [0,1]$ with $\tau_1 \neq \tau_2$, we define
    \[
        S_{\tau_1 \rightarrow \tau_2} \sigma = |\tau_1-\tau_2|^{-n} e^{2 \pi i (\tau_2 - \tau_1) x \cdot \xi}*\sigma.
    \]
\end{defin}
\noindent Note that $S_{\tau_1 \rightarrow \tau_2}$ is an isomorphism of $M^{p,q}(\R^{2n})$ for $1 \leq p,q \leq \infty$, $S_{\tau_1 \rightarrow \tau_2}^{-1} = S_{\tau_2 \rightarrow \tau_1}$, and
\[
    Op_{\tau_1} (\sigma) = Op_{\tau_2} (S_{\tau_1 \rightarrow \tau_2} \sigma);
\]
see \cite{CDT2019}*{Section 5}. Further, since $S_{\tau_1 \rightarrow \tau_2}$ preserves Schwartz functions (and hence preserves $M^0(\mathbb{R}^{2n})$) and since $S_{\tau_1 \rightarrow \tau_2}^{-1} = S_{\tau_2 \rightarrow \tau_1}$, we see that $S_{\tau_1 \rightarrow \tau_2}$ is an isomorphism of $M^0(\R^{2n})$.

\begin{prop}\label{STFT>WeakCpt}
    If $\tau \in [0,1]$ and $\sigma \in M^0(\R^{2n})$, then
    \[
        \lim_{z \rightarrow \infty} \sup_{w \in \R^{2n}} |\langle Op_{\tau} (\sigma) \pi(z) \varphi, \pi(w) \varphi \rangle| = 0.
    \]
\end{prop}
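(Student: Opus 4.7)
The plan is to deduce this from Proposition \ref{FIOWBPestimate} by recognizing the Kohn--Nirenberg quantization as a Fourier integral operator, and then using the intertwining identity $Op_{\tau}(\sigma) = Op_{1}(S_{\tau \to 1}\sigma)$ recorded above to reduce the case of general $\tau$ to that of $\tau = 1$.

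First I would handle the case $\tau = 1$. The Kohn--Nirenberg operator $Op_{1}(\sigma)$ coincides with the Fourier integral operator $T_{\sigma,\Phi}$ having the bilinear phase $\Phi(x,\xi) = x\cdot \xi$. This phase is tame: its Hessian is the block matrix
\[
    D^{2}\Phi = \begin{pmatrix} 0 & I_{n} \\ I_{n} & 0 \end{pmatrix},
\]
so $|\det D^{2}\Phi| = 1 > 0$, and all second-order partials of $\Phi$ are constants while derivatives of order $\geq 3$ vanish, so $\partial^{\alpha}\Phi \in L^{\infty}(\mathbb{R}^{2n})$ for every $|\alpha|\geq 2$. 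Proposition \ref{FIOWBPestimate} therefore applies and yields
\[
    \lim_{z\to\infty} \sup_{w\in\mathbb{R}^{2n}} |\langle Op_{1}(\sigma)\pi(z)\varphi, \pi(w)\varphi\rangle| = 0
\]
for every $\sigma \in M^{0}(\mathbb{R}^{2n})$.

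For $\tau \in [0,1)$, the preamble to this subsection gives the identity $Op_{\tau}(\sigma) = Op_{1}(S_{\tau\to 1}\sigma)$ and the fact that $S_{\tau \to 1}$ is an isomorphism of $M^{0}(\mathbb{R}^{2n})$. Hence $S_{\tau\to 1}\sigma \in M^{0}(\mathbb{R}^{2n})$, and applying the already-established $\tau = 1$ case to the symbol $S_{\tau \to 1}\sigma$ yields the claimed uniform vanishing of the matrix coefficients of $Op_{\tau}(\sigma)$.

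I anticipate no genuine obstacle here, since the analytic content has been absorbed into Proposition \ref{FIOWBPestimate}; the only step that requires verification is the tameness of the linear phase $\Phi(x,\xi) = x\cdot\xi$, which is immediate from inspection of its Hessian and higher derivatives.
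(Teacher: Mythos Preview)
Your proposal is correct and follows exactly the paper's own argument: the case $\tau=1$ is reduced to Proposition~\ref{FIOWBPestimate} with the tame phase $\Phi(x,\xi)=x\cdot\xi$, and the remaining $\tau\in[0,1)$ are handled via $Op_{\tau}(\sigma)=Op_{1}(S_{\tau\to 1}\sigma)$ together with the fact that $S_{\tau\to 1}$ is an isomorphism of $M^{0}(\mathbb{R}^{2n})$. Your explicit verification of tameness is a small elaboration of what the paper leaves implicit, but otherwise the two proofs coincide.
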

\begin{proof}
    The case $\tau = 1$ holds by applying Proposition \ref{FIOWBPestimate} with $\Phi(x,\xi) = x \cdot \xi$. The conclusion for $\tau \in [0,1)$ follows from the case $\tau =1$ using the facts that  
    \[
        Op_\tau (\sigma) = Op_1 (S_{\tau \rightarrow 1} \sigma).
    \]
    and $S_{\tau\rightarrow 1}$ is an isomorphism of $M^0(\mathbb{R}^n)$.
\end{proof}

Following \cite{CNT2019}, we define the matrices and spaces appearing in Theorems \ref{TauPDOCompactness1} and \ref{TauPDOCompactness2}. Let
    $$
        J := \left( \begin{matrix}
        0 & Id_{n \times n}\\
        -Id_{n \times n} & 0
    \end{matrix} \right),
    $$
    $$
        \mathcal{B}_\tau := \left(\begin{matrix}
        \frac{1}{1-\tau} Id_{n \times n} & 0 \\
        0 & \frac{1}{\tau} Id_{n \times n}
    \end{matrix} \right),
    $$
    and
    $$
    \mathcal{U}_\tau := -\left(\begin{matrix}
        \frac{\tau}{1-\tau} Id_{n \times n} & 0 \\
        0 & \frac{1-\tau}{\tau} Id_{n \times n}
    \end{matrix} \right).
    $$
    For weights $\mu$ and $\nu$, we define the Wiener amalgam space to be 
    $$
        W(\mathcal{F} L_{\mu}^p, L_{\nu}^q )(\R^{n}) := \widehat{M_{\mu \otimes \nu}^{p,q}(\R^n)};
    $$
    in particular, 
    \[
        W(\mathcal{F} L^\infty, L_{\nu \circ\mathcal{B}_\tau}^1)(\R^{2n}) = \widehat{M_{1 \otimes \nu \circ \mathcal{B}_\tau}^{\infty,1}(\R^{2n})}.
    \]

\begin{proof}[Proof of Theorem \ref{TauPDOCompactness1}]
    We have that $Op_{\tau}(\sigma)$ is $\nu$-localized by applying \cite{CNT2019}*{Theorem 4.1} and noting that $\varphi \in  M_\nu^1$. The compactness is therefore a consequence of Theorem \ref{AbstractCompactnessTheorem} as Proposition \ref{STFT>WeakCpt} implies that $Op_{\tau}(\sigma)$ is weakly compact.
\end{proof}

\begin{proof}[Proof of Theorem \ref{TauPDOCompactness2}]
    We have that $Op_{\tau}(\sigma)$ is $(\nu,\mathcal{U}_{\tau})$-localized by \cite{CNT2019}*{Theorem 4.3}. For the compactness, first note that composition with $\mathcal{U}_\tau$ is an isomorphism on $L^{p,q}(\R^{2n})$, 
    so $M_{m,\mathcal{U}_\tau}^{p,q}(\R^n) = M_{m \circ\mathcal{U}_\tau}^{p,q}(\R^n)$. The result follows from Theorem \ref{AbstractCompactnessTheorem} and Proposition \ref{STFT>WeakCpt}.
\end{proof}


\subsection{Pseudodifferential operators with three-parameter symbols}

We prove our main result for pseudodifferential operators with three-parameter symbols in a certain weighted Sj\"ostrand class, Theorem \ref{GeneralPDOCompactness}, through an atomic decomposition of the symbol $\sigma$ into atoms $\sigma_k$ whose Fourier transforms are supported near $k \in \Z^{3n}$. We first investigate the pseudodifferential operators $T_{\sigma}$ associated with such atoms -- throughout this subsection, we consider bounded symbols $\sigma:\mathbb{R}^{3n}\rightarrow\mathbb{C}$ with $\text{supp} \, \widehat{\sigma} \subseteq K$ for some compact $K \subseteq \mathbb{R}^{3n}$. It follows that $\|\partial^\alpha \sigma\|_{L^{\infty}(\mathbb{R}^{3n})} \leq C(n,K,\alpha) \|\sigma\|_{L^{\infty}(\mathbb{R}^{3n})}$ for all multi-indices $\alpha$.

\begin{lemma}\label{DerivativeBound}
    Let $\sigma\in L^{\infty}(\mathbb{R}^{3n})$ with $\text{supp}\,\widehat{\sigma} \subseteq K$ for some compact $K \subseteq \mathbb{R}^{3n}$. If $\alpha$ is a multi-index and $N \in \N$, then there exists $C(n,K,\alpha,N)\ge 0$ such that
    $$
        |\partial_x^\alpha T_\sigma \varphi (x)| \leq C(n,K,\alpha,N) \|\sigma\|_{L^\infty} \langle x \rangle^{-N}
    $$
    for every $x \in \mathbb{R}^n$, where $\langle x \rangle := \left( 1 + |x|^2 \right)^\frac{1}{2}$.
\end{lemma}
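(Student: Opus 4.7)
The plan is to prove the estimate by two successive integrations by parts in the oscillatory integral defining $T_\sigma\varphi$, exploiting the Bernstein-type bound $\|\partial^\gamma\sigma\|_{L^\infty}\lesssim\|\sigma\|_{L^\infty}$ (noted immediately before the lemma) and the Schwartz decay of the Gaussian window $\varphi$.

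First I would integrate by parts in $y$ to render the integral absolutely convergent. Using the identity $(1-\Delta_y/(4\pi^2))^L e^{2\pi i(x-y)\cdot\xi} = (1+|\xi|^2)^L e^{2\pi i(x-y)\cdot\xi}$, $L$ Laplacians are transferred from the phase onto the amplitude $\sigma(x,y,\xi)\varphi(y)$, producing a factor $(1+|\xi|^2)^{-L}$ in front. Because $\|\partial^\gamma\sigma\|_{L^\infty}\lesssim\|\sigma\|_{L^\infty}$ and the derivatives of $\varphi$ are Schwartz, the new integrand is dominated by an $L^1(\R^{2n})$-function once $2L>n+|\alpha|$, and differentiation $\partial_x^\alpha$ under the integral is then legitimate. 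Applying Leibniz together with $\partial_x^\alpha e^{2\pi i(x-y)\cdot\xi}=(2\pi i\xi)^\alpha e^{2\pi i(x-y)\cdot\xi}$ writes $\partial_x^\alpha T_\sigma\varphi(x)$ as a finite linear combination of integrals of the form
\[
    \iint_{\R^{2n}} (1+|\xi|^2)^{-L}\,\partial_x^\gamma \partial_y^{\beta_1}\sigma(x,y,\xi)\,\partial^{\beta_2}\varphi(y)\,(2\pi i\xi)^{\alpha-\gamma}\, e^{2\pi i(x-y)\cdot\xi}\,dy\,d\xi,
\]
where $\gamma\le\alpha$ and $|\beta_1|+|\beta_2|\le 2L$.

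Next I would integrate by parts in $\xi$, using $(1-\Delta_\xi/(4\pi^2))^M e^{2\pi i(x-y)\cdot\xi}=(1+|x-y|^2)^M e^{2\pi i(x-y)\cdot\xi}$ to produce an additional factor $(1+|x-y|^2)^{-M}$; the boundary terms in $\xi$ vanish thanks to the $(1+|\xi|^2)^{-L}$ decay of the amplitude. Expanding the Laplacian power with Leibniz, every resulting term is dominated by
\[
    C_{n,K,\alpha,L,M,N}\,\|\sigma\|_{L^\infty}\,(1+|\xi|)^{|\alpha|}(1+|\xi|^2)^{-L}(1+|x-y|^2)^{-M}\langle y\rangle^{-N},
\]
since $\partial^{\beta_2}\varphi$ decays faster than $\langle y\rangle^{-N}$. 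Taking $2L>n+|\alpha|$ makes the $\xi$-integral finite, and taking $2M>n+N$ together with Peetre's inequality $\langle y\rangle^{-N}\lesssim\langle x\rangle^{-N}\langle x-y\rangle^N$ yields
\[
    \int_{\R^n}(1+|x-y|^2)^{-M}\langle y\rangle^{-N}\,dy\lesssim\langle x\rangle^{-N},
\]
which completes the estimate.

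The main technical obstacle will be justifying the initial $y$-integration by parts when $\sigma$ is merely bounded rather than Schwartz, since the raw $\xi$-integral is only conditionally convergent. The cleanest way around this is to first prove the estimate for $\sigma\in\mathcal{S}(\R^{3n})$ (where everything converges absolutely) and then extend to band-limited $L^\infty$ symbols by approximation: approximate $\sigma$ by Schwartz $\sigma_k$ with Fourier supports in a fixed slight enlargement of $K$ and with $\|\sigma_k\|_{L^\infty}\le\|\sigma\|_{L^\infty}$, and pass to the limit pointwise by dominated convergence. Equivalently, one may insert a Gaussian cutoff $e^{-\epsilon|\xi|^2}$ in the original integral, execute the entire argument on the regularized integrals, and send $\epsilon\to 0^+$, using the $(1+|\xi|^2)^{-L}$ factor obtained after the $y$-IBP as a uniform dominating function. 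Once this rigor is in place, the remainder of the proof is a routine oscillatory-integral computation.
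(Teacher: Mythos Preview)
Your argument is correct and is essentially the paper's proof: two rounds of integration by parts in the oscillatory integral, one producing $(1+|\xi|^2)^{-L}$ and the other $(1+|x-y|^2)^{-M}$, followed by the elementary convolution estimate $\int\langle x-y\rangle^{-2M}\langle y\rangle^{-N}\,dy\lesssim\langle x\rangle^{-N}$. The only cosmetic difference is that the paper performs the $\xi$-IBP first (inserting $(1+4\pi^2|x-y|^2)^{-N}(I-\Delta_\xi)^N$) and the $y$-IBP second, whereas you reverse the order; your extra care with the regularization/approximation step is a point the paper leaves implicit.
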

    \begin{proof}
        Assume that $N \geq \max \{ \lfloor \frac{n}{2}\rfloor +1, \frac{|\alpha |}{2}\}$ and $\|\sigma\|_{L^{\infty}(\mathbb{R}^{3n})} = 1$. By the Leibniz rule and since $(I-\Delta_{\xi})^Ne^{2\pi i(x-y)\cdot \xi} = (1+4\pi^2|x-y|^2)^Ne^{2\pi i(x-y)\cdot\xi}$, we have that $\partial_x^\alpha T_\sigma \varphi (x)$ is equal to 
        $$
            \sum_{\zeta \leq \alpha} \binom{\alpha}{\zeta} \iint_{\mathbb{R}^{2n}} \frac{\varphi(y)}{(1+4 \pi ^2 |x-y|^2)^N} (2 \pi i \xi)^{\alpha - \zeta}\partial_x^\zeta \sigma(x,y,\xi)  \left( I - \Delta_\xi \right)^N e^{2\pi i (x-y)\cdot\xi}\,dyd\xi.
        $$
        Fix $\zeta \leq \alpha$ and put $\eta(x,y,\xi) = (2 \pi i \xi)^{\alpha - \zeta} \partial_x^\zeta \sigma(x,y,\xi)$. Estimate the above terms using integration by parts twice and the fact that $(I-\Delta_{y})^{2N}e^{2\pi i(x-y)\cdot \xi} = (1+4\pi^2|\xi|^2)^{2N}e^{2\pi i(x-y)\cdot\xi}$: 
        \begin{align*}
            &\iint_{\mathbb{R}^{2n}} \frac{\varphi(y)}{(1+4 \pi ^2 |x-y|^2)^N} e^{2\pi i (x-y)\cdot\xi}  \left(I - \Delta_\xi \right)^N \eta(x,y,\xi)  \,dyd\xi\\
            &\quad\quad =\iint_{\mathbb{R}^{2n}} \frac{\varphi(y)}{(1+4 \pi ^2 |x-y|^2)^N} \left( \frac{\left( I - \Delta_y \right)^{2N}e^{2\pi i (x-y)\cdot\xi}}{(1 + 4 \pi^2 |\xi|^2)^{2N}}\right)  \left(I - \Delta_\xi \right)^N \eta(x,y,\xi)  \,dyd\xi\\
            &\quad\quad = \iint_{\mathbb{R}^{2n}} \frac{e^{2\pi i (x-y)\cdot\xi}}{(1 + 4 \pi^2 |\xi|^2)^{2N}}  \left( I - \Delta_y \right)^{2N} \left( \frac{\varphi(y)}{(1+4 \pi ^2 |x-y|^2)^N }  \left(I - \Delta_\xi \right)^N \eta(x,y,\xi)  \right) \,dyd\xi.
        \end{align*}
        Expanding $(I-\Delta_{\xi})^N$ and $(I-\Delta_y)^{2N}$ and applying the Leibniz rule, it is enough to estimate
        \[
            \iint_{\mathbb{R}^{2n}} \frac{e^{2\pi i (x-y)\cdot\xi}}{ (1 + 4 \pi^2 |\xi|^2)^{2N}}   \partial_y^{\mathcal{I}}\left( \frac{\varphi(y)}{(1+4 \pi ^2 |x-y|^2)^N} \right)  \partial_\xi^{\mathcal{K}} \partial_y^{\mathcal{J}}\eta(x,y,\xi)  \,dyd\xi, 
        \]
        where $|\mathcal{I}|, |\mathcal{J}| \leq 4N$ and $|\mathcal{K}| \leq 2N$. Applying the Leibniz rule and using the fact that the derivatives of $(1+4\pi^2|x-y|^2)^{-N}$ decay at worst like $(1+|x-y|^2)^{-N}$, we have
        $$
            \partial_y^{\mathcal{I}} \left(\frac{\varphi(y)}{(1+4 \pi ^2 |x-y|^2)^N}\right) \lesssim \frac{1}{(1+|x-y|^2)^N (1+|y|^2)^{2N}},
        $$
        and since $\sigma$ has bounded derivatives, we have
        $$
            \partial_\xi^\mathcal{K} \partial_y^\mathcal{J} \eta(x,y,\xi) \lesssim  |\xi|^{|\alpha|} \leq (1+|\xi|^2)^N
        $$ 
        where we used $N \geq |\alpha|/2$. Therefore, we can control the double integral above by
        \begin{align*}
            &\iint_{\mathbb{R}^{2n}} \frac{1}{(1 + |\xi|^2)^{N}(1+|x-y|^2)^N(1+|y|^2)^{2N}} \,dyd\xi \\
            &\quad\quad\approx  \int_{\mathbb{R}^n} \frac{1}{(1+|x-y|^2)^N(1+|y|^2)^{2N}}   \,dy\\
            &\quad\quad\lesssim \frac{1}{(1+|x|^2)^N}\int_{\mathbb{R}^n}\frac{1}{(1+|y|^2)^N}\,dy \approx \frac{1}{(1+|x|^2)^N} \leq \langle x \rangle^{-N},
        \end{align*}
        and the result holds.
    \end{proof}

\begin{lemma}\label{PairingBound}
    Let $\sigma\in L^{\infty}(\mathbb{R}^{3n})$ with $\text{supp}\,\widehat{\sigma} \subseteq K$ for some compact $K \subseteq \mathbb{R}^{3n}$. If $N \in \N$, then there exists $C(n,K,N)>0$ such that 
    \[
        \sup_{w \in \R^{2n}} |\langle \pi(w) T_\sigma \pi(w)^* \varphi, \pi(z) \varphi \rangle| \leq C(n,K,N)\|\sigma\|_{L^{\infty}(\mathbb{R}^{3n})} \langle z \rangle^{-N}
    \]
    for all $z \in \mathbb{R}^{2n}$. 
    \end{lemma}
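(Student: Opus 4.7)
The plan is to reduce the estimate to Lemma \ref{DerivativeBound} by identifying the conjugated operator $\pi(w) T_\sigma \pi(w)^*$ as another three-parameter pseudodifferential operator whose symbol is simply a translate of $\sigma$, and whose Fourier support and $L^\infty$ norm are therefore preserved.

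First, I would perform a direct kernel computation. Writing $\pi(w)^* f(y) = e^{-2\pi i w_2 \cdot (y + w_1)} f(y + w_1)$ and applying $\pi(w) = M_{w_2} T_{w_1}$ to $T_\sigma \pi(w)^* f$, the substitutions $y \mapsto y - w_1$ followed by $\xi \mapsto \xi - w_2$ yield
\[
    \pi(w) T_\sigma \pi(w)^* f(x) = \iint_{\mathbb{R}^{2n}} \sigma(x - w_1, y - w_1, \xi - w_2)\, e^{2\pi i (x-y)\cdot \xi} f(y)\, dy\, d\xi,
\]
so $\pi(w) T_\sigma \pi(w)^* = T_{\sigma_w}$ where $\sigma_w := \sigma(\,\cdot - (w_1, w_1, w_2))$. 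Since translation acts on the Fourier transform by modulation, $\text{supp}\,\widehat{\sigma_w} = \text{supp}\,\widehat{\sigma} \subseteq K$, and trivially $\|\sigma_w\|_{L^\infty} = \|\sigma\|_{L^\infty}$. Consequently, Lemma \ref{DerivativeBound} applied to $\sigma_w$ delivers, \emph{uniformly in $w$}, the bound $|\partial_x^\alpha T_{\sigma_w} \varphi(x)| \leq C(n, K, \alpha, N') \|\sigma\|_{L^\infty} \langle x \rangle^{-N'}$ for every multi-index $\alpha$ and every $N' \in \mathbb{N}$.

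Next, I would express the pairing as
\[
    \langle T_{\sigma_w} \varphi, \pi(z) \varphi \rangle = \int_{\mathbb{R}^n} T_{\sigma_w} \varphi(x)\, \overline{\varphi(x - z_1)}\, e^{-2\pi i z_2 \cdot x}\, dx
\]
and integrate by parts $M$ times in $x$ using the identity $(I - (2\pi)^{-2} \Delta_x)^M e^{-2\pi i z_2 \cdot x} = \langle z_2 \rangle^{2M} e^{-2\pi i z_2 \cdot x}$, to gain a factor of $\langle z_2 \rangle^{-2M}$. Expanding the resulting derivatives via the Leibniz rule and applying the uniform estimate above to $T_{\sigma_w} \varphi$ together with the Schwartz decay of $\varphi(\,\cdot - z_1)$, each surviving term is controlled by $C \|\sigma\|_{L^\infty} \langle x \rangle^{-N'} \langle x - z_1 \rangle^{-N'}$ with $N'$ arbitrarily large. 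The elementary bound $\int_{\mathbb{R}^n} \langle x \rangle^{-N'} \langle x - z_1 \rangle^{-N'}\, dx \lesssim \langle z_1 \rangle^{-N'}$ for $N' > n$, obtained by splitting the domain into the regions $\{|x| \geq |z_1|/2\}$ and $\{|x - z_1| \geq |z_1|/2\}$, then yields
\[
    \sup_{w \in \mathbb{R}^{2n}} |\langle T_{\sigma_w} \varphi, \pi(z) \varphi \rangle| \lesssim \|\sigma\|_{L^\infty}\, \langle z_1 \rangle^{-N'}\, \langle z_2 \rangle^{-2M}.
\]
Choosing $N' = 2M = N$ and using $\langle z_1 \rangle \langle z_2 \rangle \geq \langle z \rangle$ gives the claimed $\langle z \rangle^{-N}$ decay.

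The principal obstacle is the initial identification of $\pi(w) T_\sigma \pi(w)^*$ with $T_{\sigma_w}$ for $\sigma_w$ a translate of $\sigma$; this is the one step that is not purely routine, but once established, the uniformity in $w$ is immediate from the translation invariance of $\text{supp}\,\widehat{\sigma}$ and $\|\sigma\|_{L^\infty}$, and the $z$-decay follows from the standard integration-by-parts argument above.
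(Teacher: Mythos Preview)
Your proof is correct and follows essentially the same route as the paper: reduce to $w=0$ via the identification $\pi(w)T_\sigma\pi(w)^*=T_{\sigma_w}$ with $\sigma_w$ a translate of $\sigma$, then integrate by parts in $x$ against $e^{-2\pi i z_2\cdot x}$ to gain decay in $z_2$, apply Lemma~\ref{DerivativeBound} for decay in $x$, and use the standard convolution estimate for decay in $z_1$. The only cosmetic point is that ``$2M=N$'' requires $N$ even; simply take $M=\lceil N/2\rceil$ instead, which does not affect the argument.
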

    \begin{proof}
        Assume $N \geq n+1$ and $\|\sigma\|_{L^{\infty}(\mathbb{R}^{3n})}=1$. Since $\pi(w) T_{\sigma} \pi(w)^*$ is a pseudodifferential operator associated with a symbol $\sigma'(x,y,\xi) = \sigma(x-w_1,y-w_1,\xi-w_2)$ such that $\|\sigma'\|_{L^\infty(\mathbb{R}^{3n})} = \|\sigma\|_{L^\infty(\mathbb{R}^{3n})}$ and $\text{supp} \, \widehat{\sigma'} = \text{supp} \, \widehat{\sigma}$, it suffices to assume $w=0$. Writing $(I-\Delta_x)^Ne^{-2\pi ix\cdot z_2} = (1+4\pi^2|z_2|^2)^Ne^{-2\pi ix\cdot z_2}$, expanding $(I-\Delta_x)^N$, and using integration by parts, we have
        \begin{align*}
            |\langle T_\sigma \varphi , \pi(z) \varphi \rangle| &= (1+4\pi^2|z_2|^2)^{-N} \left|\int_{\R^n} T_\sigma \varphi(x) \varphi(x-z_1) \left( I - \Delta_x \right)^N e^{-2\pi i x \cdot z_2}\,dx\right|\\
            &\lesssim (1+|z_2|^2)^{-N}\sum_{|\alpha|,|\beta| \leq 2N} \int_{\R^n} \left| \partial_x^\alpha T_\sigma \varphi(x) \partial_x^\beta \varphi(x-z_1) \right| \, dx\\
            &= (1+|z_2|^2)^{-N}\sum_{|\alpha|,|\beta| \leq 2N} \int_{\R^n} \left| \partial^\alpha T_\sigma \varphi(z_1-x) \partial^\beta \varphi(-x) \right| \, dx.
        \end{align*}
        Lemma \ref{DerivativeBound} gives a bound on each above integral by a constant times
        \[
            \int_{\R^n} \langle z_1-x \rangle^{-N} \langle x \rangle ^{-2N} \, dx \lesssim \langle z_1 \rangle^{-N},
        \]
        where we use that $N \geq n+1$ so that $\int \langle x \rangle^{-N} \, dx < \infty$. This estimate implies 
        \[
            \sup_{w \in \R^{2n}} |\langle \pi(w) T_\sigma \pi(w)^* \varphi, \pi(z) \varphi \rangle| \leq C(n,K,N)  \langle z_1 \rangle ^{-N} \langle z_2 \rangle^{-N}
        \]
        which implies the desired result.
    \end{proof}

\begin{cor}\label{HormanderLocalization}
    Let $p,q \in [1,\infty)$, $\nu$ be an admissible weight of polynomial growth, and $m$ be a $\nu$-moderate weight. If $\sigma\in L^{\infty}(\mathbb{R}^{3n})$ with $\text{supp}\,\widehat{\sigma} \subseteq K$ for some compact $K \subseteq \mathbb{R}^{3n}$, then $T_\sigma$ is $\nu$-localized and there exists $C(K,\nu)>0$ such that 
    \[
        \|T_\sigma\|_{M_m^{p,q}(\R^n) \rightarrow M_m^{p,q}(\R^n)} \leq C(K,\nu)\|\sigma\|_{L^\infty(\mathbb{R}^{3n})}.
    \]
\end{cor}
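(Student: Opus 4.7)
The plan is to deduce $\nu$-localization from Lemma \ref{PairingBound} by converting the sup-over-$w$ bound on the conjugation $\pi(w) T_\sigma \pi(w)^*$ into a translation-invariant decay estimate for the matrix coefficients $\langle T_\sigma \pi(z)\varphi, \pi(w)\varphi\rangle$ in terms of $w-z$, and then to invoke Proposition \ref{localized>bounded} for both the $\nu$-localization bound and the explicit norm estimate.

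First I would unwind the conjugation in Lemma \ref{PairingBound} using the covariance identities $\pi(w)^* = e^{2\pi i w_1 \cdot w_2}\pi(-w)$ and $\pi(w)^*\pi(z) = c(w,z)\pi(z-w)$ for some unimodular phase $c(w,z)$. Since phases do not affect absolute values, this gives
\[
|\langle \pi(w) T_\sigma \pi(w)^*\varphi, \pi(z)\varphi\rangle| = |\langle T_\sigma \pi(-w)\varphi, \pi(z-w)\varphi\rangle|,
\]
and after substituting $z' = -w$, $w' = z-w$, Lemma \ref{PairingBound} rewrites as
\[
|\langle T_\sigma \pi(z')\varphi, \pi(w')\varphi\rangle| \leq C(n,K,N)\|\sigma\|_{L^\infty(\R^{3n})}\langle w' - z'\rangle^{-N}
\]
uniformly in $z',w' \in \R^{2n}$. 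This is exactly the form required by Definition \ref{LocalizationDef} with $\chi$ the identity and $g_1 = g_2 = \varphi$.

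Next I would use the polynomial growth of $\nu$ to certify integrability: pick $s \geq 0$ with $\nu \lesssim \langle \cdot\rangle^s$ and choose $N > 2n + s$, so that $L(\zeta) := C(n,K,N)\|\sigma\|_{L^\infty(\R^{3n})}\langle \zeta\rangle^{-N}$ belongs to $L^1_\nu(\R^{2n})$ with $\|L\|_{L^1_\nu(\R^{2n})} \leq C(K,\nu)\|\sigma\|_{L^\infty(\R^{3n})}$. This yields $\nu$-localization, and Proposition \ref{localized>bounded} then yields the boundedness statement; inspection of its proof shows that the operator norm from $M_m^{p,q}(\R^n)$ to $M_m^{p,q}(\R^n)$ is controlled by $\|L\|_{L^1_\nu}$ through the convolution embedding $L_m^{p,q} * L_\nu^1 \subseteq L_m^{p,q}$, producing exactly the stated constant $C(K,\nu)\|\sigma\|_{L^\infty(\R^{3n})}$. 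The main obstacle is the algebraic bookkeeping that converts Lemma \ref{PairingBound}'s conjugation-based supremum into a direct $w-z$ decay estimate; once that translation is made, the use of polynomial growth to absorb $\nu$ into the decay $\langle\cdot\rangle^{-N}$, and the invocation of Proposition \ref{localized>bounded}, are routine.
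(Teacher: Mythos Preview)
Your proposal is correct and follows exactly the paper's route: the authors' proof is the two-sentence observation that Lemma \ref{PairingBound} yields $\nu$-localization and that tracking the $\|\sigma\|_{L^\infty}$ dependence through Proposition \ref{localized>bounded} gives the norm bound. Your version simply spells out the conjugation algebra and the choice of $N$ that the paper leaves implicit.
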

\begin{proof}
    The $\nu$-localization is a direct consequence of Lemma \ref{PairingBound}, and the operator norm estimate follows from the quantitative dependence of the inequality in Lemma \ref{PairingBound} on $\|\sigma\|_{L^{\infty}(\mathbb{R}^{3n})}$ and the bound in the proof of Proposition \ref{localized>bounded}.
\end{proof}

\begin{prop}\label{HormanderCompactness}
    If $\sigma\in L^{\infty}(\mathbb{R}^{3n})$ with $\text{supp}\,\widehat{\sigma} \subseteq K$ for some compact $K \subseteq \mathbb{R}^{3n}$ and 
    $$
        \lim_{z \rightarrow \infty} \partial^\alpha\sigma(x-z_1,y-z_1,\xi-z_2) = 0
    $$ 
    pointwise for all multi-indices $\alpha$, then $T_\sigma$ is weakly compact.
\end{prop}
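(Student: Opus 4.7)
The plan is to reduce the weak-compactness condition to a vanishing statement for the shifted symbols $\sigma_z(x,y,\xi) := \sigma(x+z_1, y+z_1, \xi+z_2)$ and then apply dominated convergence, leveraging the smoothness and equicontinuity supplied by the compact Fourier support of $\widehat\sigma$. Fixing a compact $K' \subseteq \mathbb{R}^{2n}$ and substituting $w = z + u$ in the weak-compactness condition reduces the task to showing
\[
    \lim_{z \to \infty} \sup_{u \in K'} |\langle T_\sigma \pi(z+u)\varphi, \pi(z)\varphi \rangle| = 0.
\]

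First, I would perform a conjugation reduction. As observed in the proof of Lemma \ref{PairingBound}, $\pi(w) T_\sigma \pi(w)^*$ is the three-parameter pseudodifferential operator with symbol $\sigma(\cdot - w_1, \cdot - w_1, \cdot - w_2)$. Applying this with $w = -z$, together with the unit-modulus factorization $\pi(z+u) = e^{2\pi i z_1 \cdot u_2}\pi(z)\pi(u)$ and the identity $\pi(z)^{-1} T_\sigma \pi(z) = \pi(-z) T_\sigma \pi(-z)^*$, gives
\[
    |\langle T_\sigma \pi(z+u)\varphi, \pi(z)\varphi\rangle| = |\langle T_{\sigma_z} \pi(u)\varphi, \varphi \rangle|.
\]
By symmetry of the limit, the hypothesis becomes $\partial^\alpha \sigma_z \to 0$ pointwise on $\mathbb{R}^{3n}$ as $z \to \infty$ for every multi-index $\alpha$.

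Next, since $\mathrm{supp}\,\widehat{\sigma_z} = \mathrm{supp}\,\widehat{\sigma} \subseteq K$, I would repeat the integration-by-parts manipulations from the proofs of Lemmas \ref{DerivativeBound} and \ref{PairingBound} with $\sigma_z$ in place of $\sigma$ and $\pi(u)\varphi$ in place of $\varphi$, representing $\langle T_{\sigma_z}\pi(u)\varphi, \varphi\rangle$ as a finite sum of absolutely convergent integrals of the form
\[
    \iiint_{\mathbb{R}^{3n}} \partial_\xi^{\mathcal{K}} \partial_y^{\mathcal{J}} \sigma_z(x,y,\xi) \, \Phi_u(x,y,\xi) \, dy\, d\xi\, dx,
\]
where $\Phi_u$ is Schwartz in $(x,y,\xi)$ with pointwise and $L^1$ envelopes bounded uniformly over $u \in K'$, since $\pi(u)\varphi$ and its derivatives form a uniformly Schwartz family as $u$ ranges over a compact set. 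Each integrand is dominated by $\|\partial_\xi^{\mathcal{K}}\partial_y^{\mathcal{J}}\sigma\|_{L^\infty} |\Phi_u(x,y,\xi)|$, an integrable envelope independent of $z$ and of $u \in K'$. Because $\widehat{\sigma}$ has compact support, every partial derivative of $\sigma$ is bounded, so each $\partial^\alpha \sigma_z$ is uniformly Lipschitz; the family $\{\partial^\alpha \sigma_z\}_z$ is therefore equicontinuous, and the pointwise convergence upgrades to uniform convergence on compact subsets of $\mathbb{R}^{3n}$. Splitting the domain of each integral into a large compact core, on which the integrand tends to $0$ uniformly in $u \in K'$, and its complement, on which the envelope contributes arbitrarily little by integrability, yields the required uniform-in-$u$ decay.

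The main technical obstacle I anticipate is securing the uniformity in $u \in K'$ of the dominated-convergence step. This hinges on two points: tracking the factor $\pi(u)\varphi$ through the Leibniz-rule expansions inside the integration by parts to verify that the envelopes $\Phi_u$ are uniformly controlled over compact $u$-sets, and exploiting the compact Fourier support of $\sigma$ (via Lipschitz equicontinuity of its derivatives) to upgrade the pointwise vanishing of $\partial^\alpha \sigma_z$ to locally uniform vanishing. The phase-tracking in the conjugation step is routine since all the phases have unit modulus and hence do not affect absolute values.
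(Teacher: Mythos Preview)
Your proof is correct and follows the same strategy as the paper: conjugate by $\pi(z)$ to reduce to the shifted symbol $\sigma_z$, express the matrix coefficient as an absolutely convergent integral via integration by parts, and apply dominated convergence using the uniform Schwartz bounds on $\pi(u)\varphi$ for $u$ in a compact set. The paper's execution is a bit leaner---a single integration by parts $(I-\Delta_y)^N$ already produces the integrable envelope $(1+|\xi|^2)^{-N}(1+|x|^2)^{-N}(1+|y|^2)^{-N}$ uniformly over $|w|\le M$, after which plain dominated convergence suffices, so the equicontinuity upgrade and domain-splitting you describe are unnecessary (though not wrong).
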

\begin{proof}
        It is easy to see that weak compactness is equivalent to
        \[
            \lim_{z \rightarrow \infty} \sup_{\substack{w\in\mathbb{R}^{2n} \\ |z-w| \leq M}} | \langle T_\sigma \pi(z) \varphi, \pi(w) \varphi \rangle | = 0,
        \]
        for each $M>0$, or equivalently
        \[
            \lim_{z \rightarrow \infty} \sup_{\substack{w\in\mathbb{R}^{2n}\\ |w| \leq M}} | \langle T_{\sigma_{z}}  \varphi, \pi(w) \varphi \rangle| = 0
        \]
        where $\sigma_{z} = \sigma(x-z_1,y-z_1,\xi-z_2)$. Put $N = \lfloor\frac{n}{2}\rfloor+1$.
        \begin{align*}
            &\langle T_{\sigma_{z}}  \varphi, \pi(w) \varphi \rangle = \iiint_{\R^{3n}} \sigma(x-z_1,y-z_1,\xi-z_2) \varphi(y) (\overline{\pi(w)\varphi})(x) e^{2 \pi i (x-y)\cdot\xi} \, dy d \xi dx\\
            &\quad= \iiint_{\R^{3n}} \frac{\varphi(y) (\overline{\pi(w) \varphi})(x)}{(1+4\pi^2 |\xi|^2)^{N}} \sigma(x-z_1,y-z_1,\xi-z_2) (I-\Delta_y)^N e^{2 \pi i  (x-y)\cdot \xi} \, dy d \xi dx.
        \end{align*}
        Upon expanding the differential operator, integrating by parts, and applying the Leibniz rule, we see that it suffices to estimate
        \[
            \iiint_{\R^n} \frac{e^{2 \pi i (x-y)\cdot \xi}}{(1+4\pi^2 |\xi|^2)^{N}} \partial_y^\alpha \sigma(x-z_1,y-z_1,\xi-z_2) \partial_y^\beta \varphi(y) (\overline{\pi(w) \varphi})(x) \, dy d \xi dx,
        \]
        where $|\alpha|,|\beta| \leq 2N$. Since the Schwartz semi-norms of $\overline{\pi(w) \varphi}$ are bounded uniformly in $|w| \leq M$, we can estimate the integrand by a constant times 
        \[
            (1+|\xi|^2)^{-N}(1+|x|^2)^{-N}(1+|y|^2)^{-N} \left| \partial_y^\alpha \sigma(x-z_1,y-z_1,\xi-z_2) \right|.
        \]
        Applying the decay assumption on $\sigma$ and dominated convergence, we conclude that
        \[
            \lim_{z \rightarrow \infty} \sup_{\substack{w\in\mathbb{R}^{2n}\\ |w| \leq M}} | \langle T_{\sigma_{z}}  \varphi, \pi(w) \varphi \rangle| = 0,
        \]
        as required.
\end{proof}

We next establish an atomic decomposition for weighted Sj\"ostrand classes, which we use to pass from bounded symbols with compact Fourier support to general symbols in $M_{1 \otimes \nu}^{\infty,1}(\R^n)$. This result is based on the corresponding unweighted result of \cite{B1997}*{Theorem 1.2}.

\begin{thm}\label{AtomicDecomp}
    There exists a compact set $K\subseteq\mathbb{R}^{n}$ such that for any continuous, submultiplicative weight $\nu$ of polynomial growth and $\sigma \in M_{1\otimes \nu}^{\infty,1}(\R^n)$, we have
    \[
        \sigma = \sum_{k \in \Z^{n}} \sigma_k,
    \]
    where $\text{supp} \, \widehat{\sigma_k} \subseteq k+K$ and $\|\sigma_k\|_{L^{\infty}(\mathbb{R}^{n})} \lesssim \nu(k)^{-1} L(k)$ for some $L$ satisfying $ \sum_k L(k) = \|\sigma\|_{M_{1 \otimes \nu}^{\infty, 1}(\R^n)}$.
    
\end{thm}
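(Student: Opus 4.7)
The plan is to construct the atoms via a smooth partition of unity on the Fourier side: I would fix a real, even $\psi_0 \in C_c^\infty(\R^n)$ supported in a compact set $K \subseteq \R^n$ with $\sum_{k\in\Z^n} \psi_0(\cdot - k) \equiv 1$, set $\psi_k(\xi) := \psi_0(\xi - k)$, and define $\sigma_k := \psi_k(D)\sigma$ as the corresponding Fourier multiplier. Then $\widehat{\sigma_k} = \psi_k \widehat{\sigma}$ has support in $k + K$, and $\sigma = \sum_k \sigma_k$ in $\mathcal{S}'(\R^n)$ because the partition is locally finite. The whole theorem then reduces to establishing the quantitative bound
\[
    \sum_{k\in\Z^n} \nu(k)\, \|\sigma_k\|_{L^\infty(\R^n)} \leq C\, \|\sigma\|_{M_{1\otimes\nu}^{\infty,1}(\R^n)}
\]
with a constant $C$ independent of $\sigma$, because setting
\[
    L(k) := \|\sigma\|_{M_{1\otimes\nu}^{\infty,1}(\R^n)} \cdot \frac{\nu(k)\|\sigma_k\|_{L^\infty}}{\sum_j \nu(j) \|\sigma_j\|_{L^\infty}}
\]
gives both $\sum_k L(k) = \|\sigma\|_{M_{1\otimes\nu}^{\infty,1}(\R^n)}$ and $\|\sigma_k\|_{L^\infty} \leq C\,\nu(k)^{-1}L(k)$.

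To prove the bound, my approach is to apply $\psi_k(D)$ to the Gaussian reproducing formula \eqref{ReproducingFormula}. Using $\mathcal{F}[M_\zeta T_z \varphi](\xi) = e^{-2\pi i z \cdot (\xi - \zeta)}\widehat{\varphi}(\xi - \zeta)$ and the substitution $\eta = \xi - \zeta$ produces the representation
\[
    \sigma_k(x) = \|\varphi\|_{L^2}^{-2} \iint_{\R^{2n}} V_\varphi\sigma(z,\zeta)\, e^{2\pi i x \cdot \zeta}\, \Phi_k(\zeta;\, x-z)\, dz\, d\zeta,
\]
where $\Phi_k(\zeta;w) := \int_{\R^n} \psi_0(\eta + \zeta - k)\,\widehat{\varphi}(\eta)\, e^{2\pi i w \cdot \eta}\, d\eta$. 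Since $\psi_0(\eta + \zeta - k)$ vanishes outside $(k - \zeta) + K$, where $|\eta| \gtrsim |\zeta - k|$ once $|\zeta - k|$ is large, the Gaussian decay of $\widehat\varphi$ combined with a standard estimate such as $\|\mathcal{F}^{-1}\phi\|_{L^1} \lesssim \|(I - \Delta)^{(n+1)/2}\phi\|_{L^1}$ yields, for every $N \in \N$, a constant $C_N$ such that
\[
    \|\Phi_k(\zeta;\cdot)\|_{L^1(\R^n)} \leq C_N\, (1+|\zeta - k|)^{-N}.
\]
Taking $L_x^\infty$-norms and applying Young's inequality for the convolution in $z$ then gives
\[
    \|\sigma_k\|_{L^\infty(\R^n)} \leq C_N\int_{\R^n} \sup_{z \in \R^n} |V_\varphi\sigma(z,\zeta)|\, (1+|\zeta - k|)^{-N}\, d\zeta.
\]

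To conclude, I would multiply by $\nu(k)$, sum in $k$, and interchange summation with integration by Fubini, producing a factor of $\sum_k \nu(k)(1 + |\zeta - k|)^{-N}$. Using submultiplicativity $\nu(k) \leq \nu(\zeta)\nu(k - \zeta)$ together with the polynomial growth $\nu(k-\zeta) \lesssim (1+|\zeta - k|)^s$ and choosing $N > s + n$ makes this sum bounded by a constant multiple of $\nu(\zeta)$ uniformly in $\zeta$; the desired inequality
\[
    \sum_k \nu(k)\, \|\sigma_k\|_{L^\infty} \lesssim \int_{\R^n} \nu(\zeta)\sup_{z \in \R^n} |V_\varphi\sigma(z,\zeta)|\, d\zeta \approx \|\sigma\|_{M_{1\otimes\nu}^{\infty,1}(\R^n)}
\]
then follows. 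The main technical obstacle is the kernel bound on $\|\Phi_k(\zeta;\cdot)\|_{L^1}$: the Gaussian decay of $\widehat\varphi$ and the compact support of $\psi_0$ must combine to give decay faster than any polynomial in $|\zeta - k|$, since only then will the polynomial growth of $\nu$ be absorbed into $\nu(\zeta)$ on the right-hand side.
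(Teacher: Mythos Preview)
Your argument is correct, but it takes a different and somewhat longer path than the paper's.

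The paper avoids the reproducing formula and the kernel estimate entirely by an averaging trick.  It starts from the \emph{same} smooth partition of unity $\sum_k \phi(\cdot-k)\equiv 1$ on the Fourier side, but instead of fixing it, it averages the translates over the unit cube: writing $\widehat{\theta_t}:=\phi(\cdot-t)$ and $\psi:=\int_{[0,1)^n}\theta_t\,dt$, the atoms are $\sigma_k:=\sigma*\psi_k$ with $\psi_k(x)=e^{2\pi i k\cdot x}\psi(x)$.  The payoff is that the convolution evaluates pointwise as an STFT with the Schwartz window $\tilde{\overline{\theta}}$:
\[
|\sigma_k(x)|\le \int_{[0,1)^n} \bigl|\langle \sigma, M_{k+t}T_x\tilde{\overline{\theta}}\rangle\bigr|\,dt,
\]
so after multiplying by $\nu(k)\lesssim \nu(k+t)$ and summing over $k$, the cube average exactly tiles $\R^n$ and the right-hand side \emph{is} the $M^{\infty,1}_{1\otimes\nu}$ norm (with window $\tilde{\overline{\theta}}$).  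No kernel bound, no appeal to Gaussian decay of $\widehat\varphi$, no discrete-to-continuous comparison of $\sum_k \nu(k)(1+|\zeta-k|)^{-N}$ with $\nu(\zeta)$ is needed.

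Your route instead pushes $\psi_k(D)$ through the Gaussian inversion formula and reduces to the $L^1$ bound on $\Phi_k(\zeta;\cdot)$; this works because $\widehat\varphi$ decays faster than any polynomial on the support $k-\zeta+K$, and polynomial growth of $\nu$ then lets you close the sum.  It is a perfectly valid alternative, and it makes the role of the polynomial-growth hypothesis on $\nu$ more visible (you actually consume it when choosing $N>s+n$), whereas in the paper's proof that hypothesis is used only implicitly through the window-independence of the modulation space norm.  The cost is a longer argument with an extra analytic lemma; the paper's averaging device is shorter and more structural.
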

\begin{proof}
    For $t \in \R^n$, define $\widehat{\theta_t}(\cdot) := \phi(\cdot - t)$, where $ \phi \in C_0^\infty(\mathbb{R}^n)$ has the property 
    \[
        \sum_{k \in \Z^n} \phi(\cdot - k) \equiv 1.
    \]
    Then, for any $t \in Q := [0,1)^n$, we have 
    \[
        \sigma = \sum_{k \in \Z^n} \sigma * \theta_{k+t}.
    \]
    Put $\psi := \int_Q \theta_{t} \, dt$ and $\psi_k(x) := e^{2 \pi i x \cdot k} \psi(x) 
    = \int_Q e^{2 \pi i (k + t) \cdot x} \theta(x) \, dt$. Then $\text{supp} \, \widehat{\psi_k} \subseteq k+K$. Taking an average in $t$, we see
    \[
        \sigma = \sum_{k \in \Z^n} \sigma_k,
    \]
    where $\sigma_k := \sigma * \psi_k$. It remains to prove the $L^\infty(\mathbb{R}^n)$ estimate on $\sigma * \psi_k$: 
    \begin{align*}
        |\sigma * \psi_k (x)| &= \bigg| \int_Q \int_{\R^n} \sigma(x-y) e^{2 \pi i y \cdot (k+t)} \theta(y) \, dy dt \bigg|\\
        &\leq \int_Q \left| \int_{\R^n} \sigma(y) e^{-2 \pi i y \cdot (k+t)} \theta(x-y) \, dy \right| dt\\
        &= \int_Q |\langle \sigma, M_{k+t} T_x \tilde{\overline{\theta}} \rangle| \, dt.
    \end{align*}
    Since $\nu$ is continuous and submultiplicative, we have $\nu(k) \lesssim \nu(k+t)$ for $t \in Q$, and so
    \[
        \sup_{x \in \mathbb{R}^n} \int_Q |\langle \sigma, M_{k+t} T_x \tilde{\overline{\theta}} \rangle| \, dt \lesssim \nu(k)^{-1} \int_Q \sup_{x\in\mathbb{R}^n} |\langle \sigma, M_{k+t} T_x \tilde{\overline{\theta}} \rangle| \, \nu(k+t) dt.
    \]
    Since $\tilde{\overline{\theta}} \in \mathcal{S}(\R^n)$ and $\sigma \in M_{1 \otimes \nu}^{\infty,1}(\mathbb{R}^n)$,
    \[
        \sum_{k \in \mathbb{Z}^n} \int_Q \sup_{x\in\mathbb{R}^n} |\langle \sigma, M_{k+t} T_x \tilde{\overline{\theta}} \rangle| \, \nu(k+t) dt = \int_{\R^{2n}} \sup_{x\in\mathbb{R}^n} |\langle \sigma, M_{t} T_x \tilde{\overline{\theta}} \rangle| \, \nu(t) dt \approx \|\sigma\|_{M_{1 \otimes \nu}^{\infty,1}(\R^n)},
    \]
      which gives the desired conclusion.
\end{proof}

The following bound allows us to conclude that three-parameter pseudodifferential operators with atomic symbols are $\nu$-localized. 
\begin{lemma}\label{AtomBound}
    If $\sigma \in L^\infty(\R^{3n})$ with $\text{supp} \, \widehat{\sigma} \subseteq k + K$ for some compact $K\subseteq\mathbb{R}^{3n}$ and $k = (k_1,k_2,k_3) \in \Z^{3n}$, then 
    \[
        \sup_{w \in \R^{2n}} |\langle \pi(w)T_{\sigma} \pi(w)^* \varphi, \pi(z) \varphi \rangle| \lesssim \|\sigma\|_{L^{\infty}(\mathbb{R}^{3n})}\langle z + (k_3, -k_1-k_2) \rangle^{-N}
    \]
    for all $z \in \mathbb{R}^{2n}$ and all $N \in\mathbb{N}$.
\end{lemma}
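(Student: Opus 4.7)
The plan is to reduce the estimate to Lemma \ref{PairingBound} by factoring out the Fourier-support shift encoded in $k$.

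First, as in the proof of Lemma \ref{PairingBound}, $\pi(w) T_\sigma \pi(w)^*$ equals $T_{\sigma_w}$ with $\sigma_w(x, y, \xi) = \sigma(x - w_1, y - w_1, \xi - w_2)$. Since $\|\sigma_w\|_{L^\infty(\mathbb{R}^{3n})} = \|\sigma\|_{L^\infty(\mathbb{R}^{3n})}$ and translation in physical space produces only a modulation in Fourier space, $\widehat{\sigma_w}$ remains supported in $k + K$. Thus it suffices to prove the estimate for $w = 0$ with constants depending only on $n$, $K$, and $N$, i.e.\ independent of $k$ and $w$.

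Next, I would decompose $\sigma(x, y, \xi) = e^{2\pi i (k_1 \cdot x + k_2 \cdot y + k_3 \cdot \xi)} \tau(x, y, \xi)$, so that $\|\tau\|_{L^\infty(\mathbb{R}^{3n})} = \|\sigma\|_{L^\infty(\mathbb{R}^{3n})}$ and $\text{supp}\,\widehat\tau \subseteq K$. Substituting this factorization into the defining integral for $\langle T_\sigma \varphi, \pi(z) \varphi \rangle$ and performing the changes of variables $\xi \mapsto \xi + k_2$ and then $x \mapsto x - k_3$, a direct computation shows that the various exponential factors from $\sigma$, the kernel $e^{2\pi i (x-y) \cdot \xi}$, and the conjugate $\overline{\pi(z)\varphi(x)} = e^{-2\pi i z_2 \cdot x} \varphi(x - z_1)$ reorganize so that, modulo a unimodular constant $c$,
\[
    \langle T_\sigma \varphi, \pi(z) \varphi \rangle = c \, \langle T_\rho \varphi, \pi(z + (k_3, -k_1 - k_2)) \varphi \rangle,
\]
where $\rho(x, y, \xi) := \tau(x - k_3, y, \xi + k_2)$. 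Since $\rho$ is merely a physical-space translate of $\tau$, it has the same $L^\infty$ norm as $\sigma$, and its Fourier transform is supported in $K$.

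Applying Lemma \ref{PairingBound} (with $w = 0$) to $\rho$ at the shifted point $z + (k_3, -k_1 - k_2)$ then yields the required decay $\|\sigma\|_{L^\infty(\mathbb{R}^{3n})} \langle z + (k_3, -k_1 - k_2) \rangle^{-N}$. The main obstacle is the bookkeeping in the change-of-variables computation: one must verify that the $k_2$-dependent terms in $y$ produced by the $\xi$-substitution cancel the factor $e^{2\pi i k_2 \cdot y}$ from $\sigma$, that the $k_3$-shift in the kernel phase $e^{2\pi i (x - y + k_3) \cdot \xi}$ is absorbed by the $x$-translation, and that the surviving $x$-phase and window are exactly those of $\pi(z + (k_3, -k_1 - k_2))\varphi$ after extracting an appropriate unimodular constant. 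Once those cancellations are confirmed, the reduction to Lemma \ref{PairingBound} is immediate.
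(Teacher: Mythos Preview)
Your proposal is correct and follows essentially the same route as the paper: reduce to $w=0$ by the covariance $\pi(w)T_\sigma\pi(w)^*=T_{\sigma_w}$, strip off the modulation $e^{2\pi i k\cdot(x,y,\xi)}$ to obtain a symbol with Fourier support in $K$, and then shift variables to recognize the expression as $c\,\langle T_\rho\varphi,\pi(z+(k_3,-k_1-k_2))\varphi\rangle$ with $\rho$ a physical-space translate, so that Lemma~\ref{PairingBound} applies directly. Your version is in fact slightly more explicit than the paper's, since you carry out the substitution $\xi\mapsto\xi+k_2$ that absorbs the factor $e^{2\pi i k_2\cdot y}$ and returns $\varphi$ (rather than $M_{k_2}\varphi$) to the $y$-slot; the paper leaves that step implicit when it invokes Lemma~\ref{PairingBound}.
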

\begin{proof}
    First, note that conjugation by $\pi(w)$ results in translation of the symbol, which doesn't affect the hypotheses, so we can ignore the $\pi(w)$ terms. Let $\tilde{\sigma}(x) := e^{-2 \pi i x \cdot k} \sigma(x)$. Then $\text{supp} \,\widehat{\tilde{\sigma}} \subseteq K$ and we compute
    \begin{align*}
        |\langle T_\sigma \varphi, \pi(z) \varphi \rangle| &= \iiint_{\R^{3n}} \sigma(x,y,\xi)e^{2\pi i(x-y)\cdot \xi} \varphi(y) \overline{ \pi(z) \varphi(x)}\,dyd\xi dx\\
    &= \iiint_{\R^{3n}} \tilde{\sigma}(x,y,\xi) e^{2 \pi i k_1 \cdot x} e^{2 \pi i k_2 \cdot y} e^{2 \pi i \xi \cdot k_3} e^{2\pi i(x-y)\cdot \xi} \varphi(y) \overline{\pi(z) \varphi(x)}\,dyd\xi dx\\
    &= \iiint_{\R^{3n}} \tilde{\sigma}(x,y,\xi) e^{2\pi i(x + k_3 -y)\cdot \xi}(M_{k_2} \varphi)(y) \overline{(M_{-k_1} \pi(z) \varphi)(x)}\,dyd\xi dx\\
    &= \iiint_{\R^{3n}} \tilde{\sigma}(x-k_3,y,\xi) e^{2\pi i(x-y)\cdot \xi}(M_{k_2} \varphi)(y) \overline{(T_{k_3} M_{-k_1} \pi(z) \varphi)(x)}\,dyd\xi dx.
    \end{align*}
    Note that $\tilde{\sigma}(x-k_3, y , \xi)$ satisfies the hypotheses Lemma \ref{PairingBound}, and so we conclude
    \[
        |\langle T_{\sigma} \varphi, \pi(z) \varphi \rangle| \lesssim \|\sigma\|_{L^{\infty}(\mathbb{R}^{3n})}\langle z + (k_3, -k_1-k_2) \rangle^{-N}.
    \]
\end{proof}

\begin{proof}[Proof of Theorem \ref{GeneralPDOCompactness}] 
    We first show that $T_{\sigma}$ is $\nu$-localized. By Theorem \ref{AtomicDecomp}, we have the representation
    \[
        T_\sigma = \sum_{k \in \Z^{3n}} T_{\sigma_k}
    \]
    where $\text{supp} \,\widehat{\sigma_k} \subset k + K$ for some compact $K\subseteq\mathbb{R}^{3n}$ and $\|\sigma_k\|_{L^\infty(\mathbb{R}^{3n})} \leq \nu(-k_3,k_1+k_2)^{-1} L(k)$ with $\sum_k L(k) < \infty$. By Lemma \ref{AtomBound}, we have that
    \begin{align*}
        \sup_{w \in \R^{2n}} |\langle \pi(w)T_{\sigma_k} \pi(w)^* \varphi, \pi(z) \varphi \rangle| \nu(z) &\lesssim \nu(-k_3,k_1+k_2)^{-1} L(k) \nu(z) \langle z + (k_3, -k_1-k_2) \rangle^{-N}\\
        &\leq L(k) \nu(z + (k_3, -k_1 - k_2)) \langle z + (k_3, -k_1 - k_2) \rangle^{-N}
    \end{align*}
    which has integral bounded by $L(k)$ for sufficiently large $N$. Since $L$ is summable, it follows that
    \[
        \int_{\R^{2n}} \sup_{w \in \R^{2n}} |\langle \pi(w)T_{\sigma} \pi(w)^* \varphi, \pi(z) \varphi \rangle| \nu(z) \, dz < \infty,
    \]
    which implies that $T_\sigma$ is $\nu$-localized.

    We next show that $T_{\sigma}$ is weakly compact under the additional decay assumption -- the result then follows from Theorem \ref{AbstractCompactnessTheorem}. Write $\sigma = \sum_k \sigma_k$ as in Theorem \ref{AtomicDecomp}. From the argument above and Proposition \ref{localized>bounded}, we see that $\sum_k \|T_{\sigma_k}\|_{L^2 \rightarrow L^2} < \infty$, and so, by the dominated convergence theorem, it suffices to prove that each $T_{\sigma_k}$ is weakly compact. But for any multi-index $\alpha$, we have 
    $$ 
        \partial^\alpha \sigma_k(x-z_1,y-z_1,\xi-z_2) = \sigma * \partial^\alpha \psi_k(x-z_1,y-z_1,\xi-z_2),
    $$ which goes to zero pointwise by our decay assumption on $\sigma$ since $\psi_k \in \mathcal{S}(\mathbb{R}^n)$. Therefore, each $T_{\sigma_k}$ is weakly compact by applying Proposition \ref{HormanderCompactness} with $k+K$ in place of $K$.
\end{proof}


\section{Acknowledgements}
The authors thank Robert Fulsche, Karlheinz Gr\"ochenig, Mishko Mitkovski, and Kasso Okoudjou for inspiring discussions and feedback.


\begin{bibdiv}
\begin{biblist}
\bib{BGHO2005}{article}{
title={Modulation spaces and a class of bounded multilinear pseudodifferential operators},
author={\'A. B\'enyi},
author={K. Gr\"ochenig},
author={C. Heil},
author={K. Okoudjou},
journal={J. Operator Theory},
volume={54},
date={2005},
number={2},
pages={387--399},
review={\MR{2186361}}
}

\bib{BLOT2025}{article}{
title={Compact bilinear operators and paraproducts revisited},
author={\'A. B\'enyi},
author={G. Li},
author={T. Oh},
author={R. H. Torres},
journal={Canad. Math. Bull},
volume={68},
date={2025},
number={1},
pages={44--59},
review={\MR{4862222}}
}

\bib{BLOT20251}{article}{
title={Compact $T(1)$ theorem \`a la Stein},
author={\'A. B\'enyi},
author={G. Li},
author={T. Oh},
author={R. H. Torres},
journal={arXiv:2405.08416},
date={2024}
}

\bib{BOT2024}{article}{
title={Symbolic calculus for a class of pseudodifferential operators with applications to compactness},
author={\'A. B\'enyi},
author={T. Oh},
author={R. H. Torres},
journal={arXiv:2412.09543},
date={2024}
}

\bib{B1997}{article}{
title={Remarks on a Wiener type pseudodifferential algebra and Fourier integral operators},
author={A. Boulkhemair},
journal={Math. Res. Lett.},
volume={4},
date={1997},
number={1},
pages={53--67},
review={\MR{1432810}}
}

\bib{CV1972}{article}{
title={A class of bounded pseudo-differential operators},
author={A.-P. Calder\'on},
author={R. Vaillancourt},
journal={Proc. Nat. Acad. Sci. U.S.A.},
volume={69},
date={1972},
pages={1185--1187},
review={\MR{0298480}}
}

\bib{CYY2024}{article}{
title={A compact extension of Journ\'e's $T1$ theorem on product spaces}, 
author={M. Cao},
author={K. Yabuta},
author={D. Yang},
journal={Trans. Amer. Math. Soc.},
date={2024},
volume={377},
number={9},
pages={6251--6309},
review={\MR{4855312}}
}

\bib{CST2023}{article}{
title={Extrapolation of compactness for certain pseudodifferential operators},
author={M. J. Carro},
author={J. Soria},
author={R. H. Torres},
journal={Rev. Un. Mat. Argentina},
volume={66},
date={2023},
number={1},
pages={177--186},
review={\MR{4653690}}
}

\bib{CDT2019}{article}{
title={Norm estimates for $\tau$-pseudodifferential operators in Wiener amalgam and modulation spaces},
author={E. Cordero},
author={L. D'Elia},
author={S. I. Trapasso},
journal={J. Math. Anal. Appl.},
volume={471},
date={2019},
number={1-2},
pages={541--563},
review={\MR{3906338}}
}

\bib{CGN2012}{article}{
title={Approximation of Fourier integral operators by Gabor multipliers},
author={E. Cordero},
author={K. Gr\"ochenig},
author={F. Nicola},
journal={J. Fourier Anal. Appl.},
volume={18},
date={2012},
number={4},
pages={661--684},
review={\MR{2984364}}
}

\bib{CNR2010}{article}{
title={Time-frequency analysis of Fourier integral operators},
author={E. Cordero},
author={F. Nicola},
author={L. Rodino},
journal={Commun. Pure Appl. Anal.},
volume={9},
date={2010},
number={1},
pages={1--21},
review={\MR{2556742}}
}

\bib{CNT2019}{article}{
title={Almost diagonalization of $\tau$-pseudodifferential operators with symbols in Wiener amalgam and modulation spaces},
author={E. Cordero},
author={F. Nicola},
author={S. I. Trapasso},
journal={J. Fourier Anal. Appl.},
volume={25},
date={2019},
number={4},
pages={1927--1957},
review={\MR{3977142}}
}

\bib{CR2020}{book}{
title={Time-frequency analysis of operators},
author={E. Cordero},
author={L. Rodino},
series={De Gruyter Studies in Mathematics},
volume={75},
publisher={De Gruyter, Berlin},
year={2020},
pages={xiv+442},
review={\MR{4201879}}
}

\bib{C1975}{article}{
title={On compactness of commutators of multiplications and convolutions, and boundedness of pseudodifferential operators},
author={H. O. Cordes},
journal={J. Functional Analysis},
volume={18},
date={1975},
pages={115--131},
review={\MR{0377599}}
}

\bib{DR2016}{article}{
title={The Gohberg lemma, compactness, and essential spectrum of operators on compact Lie groups},
author={A. Dasgupta},
author={M. Ruzhansky},
journal={J. Anal. Math.},
volume={128},
date={2016},
pages={179--190},
review={\MR{3479366}}
}

\bib{DFG2002}{article}{
title={Compactness criteria in function spaces},
author={M. D\"orfler},
author={H. G. Feichtinger},
author={K. Gr\"ochenig},
journal={Colloq. Math.},
volume={94},
date={2002},
number={1},
pages={37--50},
review={\MR{1930200}}
}

\bib{FG2007}{article}{
title={Some remarks on compact Weyl operators},
author={C. Fern\'andez},
author={A. Galbis},
journal={Integral Transforms Spec. Funct.},
volume={18},
date={2007},
number={7-8},
pages={599--607},
review={\MR{2348604}}
}

\bib{FGP2019}{article}{
title={Compactness of Fourier integral operators on weighted modulation spaces},
author={C. Fern\'andez},
author={A. Galbis},
author={E. Primo},
journal={Trans. Amer. Math. Soc.},
volume={372},
date={2019},
number={1},
pages={733--753},
review={\MR{3968786}}
}

\bib{FGW2023}{article}{
title={Multilinear wavelet compact $T(1)$ theorem},
author={A. Fragkos},
author={A. Walton Green},
author={B. D. Wick},
journal={arXiv:2312.09185},
date={2023}
}

\bib{F2024}{article}{
title={Private communication},
author={R. Fulsche},
date={2024}
}

\bib{FH2025}{article}{
title={Band-dominated and Fourier-band-dominated operators on locally compact abelian groups},
author={R. Fulsche},
author={R. Hagger},
journal={arXiv:2504.17442},
date={2025}
}

\bib{G2001}{book}{
title={Foundations of time-frequency analysis},
author={K. Gr\"ochenig},
series={Applied and Numerical Harmonic Analysis},
publisher={Birkh\"auser Boston, Inc., Boston, MA},
year={2001},
pages={xvi+359},
review={\MR{1843717}}
}

\bib{G2006}{article}{
title={Time-frequency analysis of Sj\"ostrand's class},
author={K. Gr\"ochenig},
journal={Rev. Mat. Iberoam.},
volume={22},
date={2006},
number={2},
pages={703--724},
review={\MR{2294795}}
}

\bib{GH1999}{article}{
title={Modulation spaces and pseudodifferential operators},
author={K. Gr\"ochenig},
author={C. Heil},
journal={Integral Equations Operator Theory},
volume={34},
date={1999},
number={4},
pages={439--457},
review={\MR{1702232}}
}

\bib{HRT1997}{article}{
title={Singular values of compact pseudodifferential operators},
author={C. Heil},
author={J. Ramanathan},
author={P. Topiwala},
journal={J. Funct. Anal.},
volume={150},
date={1997},
number={2},
pages={426--452},
review={\MR{1479546}}
}

\bib{L2001}{article}{
title={Pseudodifferential operators on modulation spaces},
author={D. Labate},
journal={J. Math. Anal. Appl.},
volume={262},
date={2001},
number={1},
pages={242--255},
review={\MR{1857227}}
}

\bib{MS2023}{article}{
title={On the $T1$ theorem for compactness of Calder\'on-Zygmund operators},
author={M. Mitkovski},
author={C. B. Stockdale},
journal={arXiv:2309.15819},
date={2023}
}

\bib{M2011}{book}{
title={A characterization of compact pseudo-differential operators on $\mathbb{S}^1$}, 
author={S. Molahajloo},
series={Pseudo-differential operators: analysis, applications and computations, 25--29. Oper. Theory Adv. Appl., 213},
publisher={Birkh\"auser/Springer Basel AG},
address={Basel},
date={2011},
review={\MR{2867416}}
}

\bib{M2014}{article}{
title={A characterization of compact SG pseudo-differential operators on $L^2(\mathbb{R}^n)$},
author={S. Molahajloo},
journal={Math. Model. Nat. Phenom.},
volume={9},
date={2014},
number={5},
pages={239--243},
review={\MR{3264319}}
}

\bib{MW2010}{article}{
title={Ellipticity, Fredholmness and spectral invariance of pseudo-differential operators on $\mathbb{S}^1$},
author={S. Molahajloo},
author={M. W. Wong},
journal={J. Pseudo-Differ. Oper. Appl.},
volume={1},
date={2010},
number={2},
pages={183--205},
review={\MR{2679899}}
}

\bib{OV2017}{article}{
title={Endpoint estimates for compact Calder\'on-Zygmund operators},
author={J-F. Olsen},
author={P. Villarroya},
journal={Rev. Mat. Iberoam.},
volume={33},
date={2017},
pages={1285–-1308},
review={\MR{3729600}}
}

\bib{PPV2017}{article}{
title={Endpoint compactness of singular integrals and perturbations of the Cauchy integral},
author={K-M. Perfekt},
author={S. Pott},
author={P. Villarroya},
journal={Kyoto J. Math.},
volume={57},
date={2017},
number={2},
pages={365--393},
review={\MR{3648054}}
}

\bib{S1994}{article}{
title={An algebra of pseudodifferential operators},
author={J. Sj\"ostrand},
journal={Math. Res. Lett.},
volume={1},
date={1994},
number={2},
pages={185--192},
review={\MR{1266757}}
}

\bib{S1993}{book}{
title={Harmonic analysis: real-variable methods, orthogonality, and oscillatory integrals},
author={E. M. Stein},
series={Princeton Mathematical Series, 43. Monographs in Harmonic Analysis, III},
publisher={Princeton University Press},
address={Princeton, NJ},
date={1993},
pages={xiv+695},
review={\MR{1232192}}
}

\bib{SVW2022}{article}{
title={Sparse domination results for compactness on weighted spaces},
author={C. B. Stockdale},
author={P. Villarroya},
author={B. D. Wick},
journal={Collect. Math.},
volume={73},
date={2022},
number={3},
pages={535--563},
review={\MR{4467913}}
}

\bib{V2015}{article}{
    title={A characterization for compactness of singular integrals},
    author={P. Villarroya},
    journal={J. Math. Pures Appl. (9)},
    volume={104},
    date={2015},
    number={3},
    pages={485--532},
    review={\MR{3383175}}
}

\bib{W1984}{article}{
title={Quantum harmonic analysis on phase space},
author={R. Werner},
journal={J. Math. Phys.},
volume={25},
date={1984},
number={5},
pages={1404--1411},
review={\MR{0745753}}
}
\end{biblist}
\end{bibdiv}

\end{document}